\newtheorem{thm}{Theorem}[section]
\newtheorem{prop}[thm]{Proposition}
\newtheorem{lem}[thm]{Lemma}
\newtheorem{cor}[thm]{Corollary}
\numberwithin{equation}{section}
\newtheorem{bigthm}{Theorem}
\newtheorem{bigconj}{Conjecture}
\theoremstyle{definition}
\newtheorem{ex}[thm]{Example}
\newtheorem{rem}[thm]{Remark}
\def\cI{\mathcal{I}}
\def\Z{\mathbb{Z}}
\def\Q{\mathbb{Q}}  \def\C{\mathbb{C}}
 \def\R{\mathbb{R}}
  \def\P{\mathbb{P}}
  \def\cO{\mathcal{O}}
\def\cF{\mathcal{F}}
\def\P{\mathbb{P}}
\def\cI{\mathcal{I}}
\def \log{\mathrm{log}}
\def \det{\mathrm{det}}
\def \dim{\mathrm{dim}}
\def \rank{\mathrm{rank}}
\def \hom{\mathrm{Hom}}
\def\cS{\mathcal{S}}
\def\cQ{\mathcal{Q}}
\newcommand*{\da@rightarrow}{\mathchar"0\hexnumber@\symAMSa 4B }
\newcommand*{\da@leftarrow}{\mathchar"0\hexnumber@\symAMSa 4C }
\newcommand*{\xdashrightarrow}[2][]{%
  \mathrel{%
    \mathpalette{\da@xarrow{#1}{#2}{}\da@rightarrow{\,}{}}{}%
  }%
}
\newcommand{\xdashleftarrow}[2][]{%
  \mathrel{%
    \mathpalette{\da@xarrow{#1}{#2}\da@leftarrow{}{}{\,}}{}%
  }%
}
\newcommand*{\da@xarrow}[7]{%
  \sbox0{$\ifx#7\scriptstyle\scriptscriptstyle\else\scriptstyle\fi#5#1#6\m@th$}%
  \sbox2{$\ifx#7\scriptstyle\scriptscriptstyle\else\scriptstyle\fi#5#2#6\m@th$}%
  \sbox4{$#7\dabar@\m@th$}%
  \dimen@=\wd0 %
  \ifdim\wd2 >\dimen@
    \dimen@=\wd2 %
  \fi
  \count@=2 %
  \def\da@bars{\dabar@\dabar@}%
  \@whiledim\count@\wd4<\dimen@\do{%
    \advance\count@\@ne
    \expandafter\def\expandafter\da@bars\expandafter{%
      \da@bars
      \dabar@ 
    }%
  }%
  \mathrel{#3}%
  \mathrel{%
    \mathop{\da@bars}\limits
    \ifx\\#1\\%
    \else
      _{\copy0}%
    \fi
    \ifx\\#2\\%
    \else
      ^{\copy2}%
    \fi
  }%
  \mathrel{#4}%
}
\begin{document}

\title[Albanese map for K\"ahler manifolds with nef anticanonical bundle]
{Albanese map for K\"ahler manifolds \\ with nef anticanonical bundle}

\author{Philipp NAUMANN}
\address{Institut fur Mathematik, Universit\"at Bayreuth, 95440 Bayreuth, Germany}
\email{{\tt philipp.naumann@uni-bayreuth.de}}

\author{Xiaojun WU}
\address{Institut fur Mathematik, Universit\"at Bayreuth, 95440 Bayreuth, Germany.}
\email{{\tt xiaojun.wu@uni-bayreuth.de}, {\tt xiaojun.wu@univ-cotedazur.fr}}

\date{\today, version 0.01}

\renewcommand{\subjclassname}{%
\textup{2010} Mathematics Subject Classification}
\subjclass[2010]{Primary 32J25, Secondary 53C25, 14E30.}

\keywords
{K\"ahler manifolds,  
Nef anti-canonical bundles, 
Albanese maps,
Structure theorems.
}

\maketitle

\begin{abstract}
We study the structure of the Albanese map for K\"ahler manifolds with nef anticanonical bundle. First, we give a result for fourfolds whose Albanse torus is an elliptic curve. In the general case of any dimension, we look at two cases: The general fiber of the Albanese map is a Calabi-Yau manifold or a projective space. In the first case, we show that the manifold itself must be Calabi-Yau. In the second case, we give a more topological proof of a result by Cao and H\"oring which says that the manifold must be a projectivization of a numerically flat vector bundle.
\end{abstract}


\section{Introduction}
In this work, we study the Albanese map of compact K\"ahler manifolds with nef anticanonical line bundle.
The conjectural picture is the following:

\begin{bigconj}
Let $X$ be a compact K\"ahler manifold with nef anticanonical line bundle.
Then the Albanese morphism of $X$ is a locally trivial fibration with connected fibers.
\end{bigconj}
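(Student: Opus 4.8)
The plan is to adapt to the compact K\"ahler category the three-step strategy that establishes the statement when $X$ is projective (Cao--H\"oring). Write $\alpha \colon X \to A := \Alb(X)$ for the Albanese morphism.

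\textbf{Step 1: surjectivity and connectedness.} First I would show $\alpha(X) = A$. The subsheaf $\alpha^* H^0(A, \Omega^1_A) \subset H^0(X, \Omega^1_X)$ is trivial of rank $q(X)$; were $\alpha(X)$ a proper abelian subvariety, a Stein factorization together with an Ueno-fibration argument would produce a fibration of $X$ whose base geometry contradicts the nefness of $-K_X$ --- this is the K\"ahler counterpart, in the spirit of P\u{a}un and Campana, of Q.~Zhang's theorem that $-K_X$ nef forces the Albanese map to be surjective. Connectedness of the fibres is then automatic: if $\alpha = g \circ f$ is the Stein factorization, the finite part $g \colon Z \to A$ is a covering of the torus $A$, so $Z$ is again a complex torus of dimension $q(X)$, and the universal property of $\Alb(X)$ forces $g$ to be an isomorphism. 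Hence we may assume $\alpha$ is a surjective fibration with connected general fibre $F$, which again carries a nef anticanonical bundle.

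\textbf{Step 2: numerical flatness of the direct images.} Since $K_A = \mathcal{O}_A$, we have $K_{X/A} = K_X$, so $-K_{X/A}$ is nef. The curvature-positivity theory for pushforwards of relative pluricanonical bundles twisted by a relatively nef line bundle (Berndtsson--P\u{a}un, in a form valid on K\"ahler families) then shows each $\mathcal{F}_m := \alpha_* \bigl( \omega_{X/A}^{\otimes m} \bigr)$ is a nef sheaf on $A$. A slope computation on the abelian variety $A$, using once more $K_{X/A} = K_X$ together with the nefness of $-K_X$, forces $c_1(\mathcal{F}_m) = 0$; hence every $\mathcal{F}_m$ is \emph{numerically flat}, in particular locally free and an iterated extension of unitary flat bundles.

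\textbf{Step 3: from flat direct images to a locally trivial bundle.} Numerical flatness of all the $\mathcal{F}_m$ has two consequences. It excludes singular fibres in codimension one and, with a bound on fibre multiplicities (or by applying nefness of $-K_X$ on a resolution of $X \times_A X$), forces $\alpha$ to be equidimensional; and it kills the variation of the family, so $\alpha$ is \emph{isotrivial}, with every fibre isomorphic to a fixed $F$. Equidimensionality plus isotriviality, with $X$ and $A$ smooth, makes every fibre smooth, i.e.\ $\alpha$ a proper submersion. Finally, the flat (Gauss--Manin-type) connection carried by the $\mathcal{F}_m$ provides a holomorphic foliation on $X$ complementary to $T_{X/A}$; its leaves are finite and \'etale over $A$, so after a finite \'etale base change $A' \to A$ the pulled-back family splits as $X' \cong A' \times F$, and descending exhibits $\alpha$ as a locally trivial fibration with fibre $F$ and structure group $\mathrm{Aut}(F)$.

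The main obstacle is Step 2. On a non-projective compact K\"ahler manifold the positivity theory for direct images of pluricanonical sheaves, and the very notion of numerical flatness for a coherent sheaf, are much less developed than in the projective case, and one cannot call on the minimal model program to reduce $F$ to a polarized family. Controlling the singular fibres --- excluding multiple fibres and fibres of excess dimension without Viehweg-type weak positivity or a relative MMP --- is where essentially all of the difficulty lies; Steps 1 and 3 are comparatively formal once smoothness and numerical flatness are in hand. This is presumably why the present paper proceeds by treating the cases distinguished by the nature of the general fibre $F$ (Calabi--Yau, or projective space) separately.
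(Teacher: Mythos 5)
The statement you are trying to prove is Conjecture~1 of the paper: it is \emph{open}, and the paper never claims a proof of it. It is known in the projective case \cite{Cao19} and for compact K\"ahler threefolds \cite{MW23}; the paper only establishes special cases distinguished by the nature of the general fibre (fourfolds with $\tilde q = q = 1$, fibres that are Calabi--Yau, fibres that are projective spaces, and a conditional case of rational surface fibres). So a complete proof along your lines would have to overcome precisely the obstruction that you yourself flag in Step~2, and that obstruction is genuine, not a technicality.

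Concretely, Step~2 fails in the K\"ahler setting for two reasons. First, in the most relevant case the fibres are uniruled (e.g.\ $F = \P^{r-1}$ or a rational surface), so $\mathcal{F}_m = \alpha_*\bigl(\omega_{X/A}^{\otimes m}\bigr) = 0$ for all $m \geq 1$: there is nothing whose numerical flatness could be exploited. In the projective proof of \cite{Cao19} one instead pushes forward $\omega_{X/A}^{\otimes m} \otimes L$ for a line bundle $L$ that is ample on the fibres; but when $X$ is merely K\"ahler the Albanese map need not be a projective morphism (the fibres can be non-projective Calabi--Yau or hyperk\"ahler manifolds), so no such $L$ exists, and the Berndtsson--P\u{a}un/Viehweg-type positivity machinery, as well as the notion and theory of numerically flat direct image sheaves, is exactly what is unavailable --- this is the difficulty the paper states in its introduction and works around by substituting Weil--Petersson/period-map and deformation-theoretic arguments (Propositions~\ref{CY}, \ref{Prop3}) or topological extension arguments across codimension-two sets (Theorem~\ref{Proj}). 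Second, even granting numerically flat $\mathcal{F}_m$, Step~3 is too quick: isotriviality plus equidimensionality does not by itself exclude multiple or non-reduced fibres, and numerical flatness of direct images does not directly produce a flat splitting of $T_X$; in the projective and threefold cases the locally constant structure is obtained from results such as \cite[Proposition 2.5]{MW} and \cite[Theorem 2.6]{MW23}, which again presuppose a relatively ample line bundle with numerically flat pushforward. In short, your Steps~1 and~3 are the formal part, Step~2 is an open problem, and the paper's contribution is precisely a collection of partial substitutes for it rather than a proof of the conjecture.
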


The conjecture is fully proven if $X$ is projective in \cite{Cao19}.
However in the compact K\"ahler case, it is still widely open and it was only proven recently in dimension three \cite{MW23}. 
The main difficulty in the K\"ahler case arises from the fact that the Albanese morphism is no longer projective, since the fibers are not necessarily projective. Therefore, the theory of positivity of direct images - a key technique in the projective case - does not apply directly. Although it is conjectured by Campana that the MRC quotient stated in Conjecture 2 below is a projective morphism with the base being Calabi-Yau, the Albanese map can only be projective if the Hyperk\"ahler components in the Beauville-Bogomolov decomposition of the MRC quotient of $X$ are all projective.

After revealing the structure of the Albanese map for K\"ahler threefolds, a next step would be to look at dimension 4. 
Using the recent developements of 4-dimensional K\"ahler MMP in \cite{DHP23}, we first give a result about the Albanese morphism for fourfolds:

\begin{bigthm}
Let $X$ be a non-projective compact K\"ahler fourfold with $-K_X$ nef such that $\tilde{q}(X)=q(X)=1$. 
Then, up to a finite \'etale cover, $X$ is either the product of a K3 surface and a projectivization of a rank two numerically flat vector bundle over an elliptic curve or the product of a simply connected Calabi-Yau threefold with an elliptic curve.
\end{bigthm}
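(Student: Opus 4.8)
The plan is to split into cases according to whether $X$ is uniruled. If $X$ is not uniruled, then $K_X$ is pseudoeffective, and since $-K_X$ is nef the standard fact that a nef class with pseudoeffective negative is numerically trivial forces $K_X \equiv 0$. Applying the Beauville--Bogomolov decomposition for compact K\"ahler manifolds, a finite \'etale cover of $X$ is a product of a complex torus with Calabi--Yau and hyperk\"ahler factors; since $\tilde q(X) = q(X) = 1$ the torus factor must be an elliptic curve $E$, and the $3$ remaining dimensions carry augmented irregularity $0$, which rules out K3 or hyperk\"ahler factors (each would force a complementary elliptic curve, raising $\tilde q$) and leaves exactly a Calabi--Yau threefold $Y$, simply connected after passing to its finite universal cover. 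This is the second alternative, with $Y$ non-projective because $X$ is.

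Now assume $X$ is uniruled and let $\psi\colon X \dashrightarrow Z$ be the MRC fibration; then $1 \le \dim Z \le 3$ (if $\dim Z = 0$ then $X$ is rationally connected and $q(X) = 0$). Because the general $\psi$-fibre is rationally connected one has $\Alb(X) = \Alb(Z)$, so $q(Z) = 1$; pulling $\psi$ back along \'etale covers of $Z$ gives $\tilde q(Z) \le \tilde q(X) = 1$; and by the structure theory for manifolds with nef anticanonical bundle, a smooth minimal model of $Z$ satisfies $K_Z \equiv 0$. If $\dim Z = 1$ then $Z = E$ and the Albanese map of $X$ has rationally connected general fibre, which forces $X$ to be projective, a contradiction. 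If $\dim Z = 2$ then $Z$ is a compact K\"ahler surface with $K_Z \equiv 0$ and $q(Z) = 1$, hence bielliptic; pulling back along $\psi$ the \'etale cover of $Z$ by a product of two elliptic curves produces an \'etale cover of $X$ with irregularity $\ge 2$, contradicting $\tilde q(X) = 1$. Therefore $\dim Z = 3$, and Beauville--Bogomolov in dimension $3$ (with $K_Z \equiv 0$ and $q(Z) = 1$) shows that a finite \'etale cover of $Z$ is $E \times S$ with $E$ elliptic and $S$ a K3 surface, while the general $\psi$-fibre is a $\mathbb{P}^1$.

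It remains, after a further finite \'etale cover, to upgrade $\psi$ to a genuine locally trivial $\mathbb{P}^1$-bundle $\pi\colon X \to E \times S$; this uses the four-dimensional K\"ahler MMP of \cite{DHP23} and is the K\"ahler counterpart of the projective structure results of Cao and Cao--H\"oring. Writing then $X = \mathbb{P}(\mathcal V)$ with $\mathcal V$ of rank $2$ and using $K_Z \equiv 0$, one computes $-K_X \equiv 2\xi - \pi^*\det(\mathcal V)$ for $\xi$ the relative hyperplane class; this class is nef precisely when $\mathcal V$ becomes numerically flat after a numerical twist, so $X = \mathbb{P}(W)$ with $W$ of rank $2$ numerically flat on $E \times S$. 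Finally $S$ is simply connected with $H^1(S,\mathcal O_S) = 0$, so any numerically flat bundle on $E \times S$ — being a unitary flat bundle or an iterated extension of numerically trivial line bundles, where the line bundles are pulled back from $E$ since $\Pic^0(E \times S) = \Pic^0(E)$ and by K\"unneth the extension classes lie in $H^1(E,-)$ — is pulled back from the factor $E$. Hence $W = \pr_E^* W_0$ and $X = \mathbb{P}(\pr_E^* W_0) = S \times \mathbb{P}(W_0)$ with $W_0$ rank $2$ numerically flat over $E$, which is the first alternative, with $S$ non-projective because $X$ is.

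The main obstacle is exactly the step just sketched: turning the a priori only birational MRC quotient into an honest smooth $\mathbb{P}^1$-bundle over $E \times S$ in the non-projective K\"ahler setting, where the positivity-of-direct-images techniques of the projective case do not apply and one must instead exploit the recently established four-dimensional K\"ahler MMP. Secondary technical points are the input $K_Z \equiv 0$ for the base of the MRC fibration in the K\"ahler category, the projectivity criterion invoked in the $\dim Z = 1$ case, and the careful bookkeeping of the successive ``up to finite \'etale cover'' reductions (replacing an Enriques base by its K3 double cover, and checking that \'etale covers performed on fibres globalize).
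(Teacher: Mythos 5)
Your opening move already fails in this setting: you invoke ``non-uniruled $\Rightarrow K_X$ pseudoeffective'' as a standard fact, but that is the conjecture of Boucksom--Demailly--P\u{a}un--Peternell, which is only a theorem for projective manifolds (and for K\"ahler manifolds in dimension $\le 3$); for a non-projective compact K\"ahler fourfold it is open, and the paper explicitly structures its argument to \emph{avoid} it. The paper's dichotomy is not uniruled/non-uniruled but ``$K_X$ $\QQ$-effective or not'': it runs the relative $K_X$-MMP of \cite{DHP23} over the Albanese elliptic curve, shows each (simply connected) Albanese fiber is a Calabi--Yau threefold, $\P^1\times K3$, or rationally connected, rules out the last by $H^{2,0}(X)=0$, and disposes of the Calabi--Yau-fiber case by Proposition \ref{CY} (isotriviality via Tosatti--Zhang/moduli hyperbolicity plus the Weil--Petersson curvature formula for $K_{X/T}$, giving $c_1(X)=0$). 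That deformation-theoretic substitute for BDPP is precisely the content you are missing, so your non-uniruled branch has no valid starting point as written.

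The second, equally serious gap is in your uniruled branch: the step you yourself call ``the main obstacle'' --- upgrading the MRC/Albanese structure to an honest $\P^1$-bundle $\P(W)$ with $W$ numerically flat over $E\times S$ --- is asserted by appeal to ``the K\"ahler counterpart of the projective structure results of Cao and Cao--H\"oring,'' but no such counterpart exists in the literature; producing it is the bulk of the paper's proof. Concretely, the paper shows the Mori fiber space $\beta:X\to Y$ is a conic bundle with $Y$ klt, Cohen--Macaulay, then smooth by miracle flatness; gets $c_1(Y)=0$ (via Proposition \ref{CY}, or via Brunella in dimension $3$ together with pseudoeffectivity of $-K_Y$ from the conic bundle formula, which is legitimate there because $\beta$ is a projective morphism); applies Fischer--Grauert to get local triviality, produces a line bundle $L$ with $-K_X=L^{\otimes 2}$ so that $X=\P(\beta_*L)$ (this also handles the Brauer-type obstruction you gloss over), proves $\beta_*L$ is weakly positively curved with $c_1=0$, hence numerically flat by \cite{numer_flat}, and finally descends it to the elliptic curve via Lemma \ref{Lemma 1}. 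Your sketch of the numerical computation $-K_X\equiv 2\xi-\pi^*\det(\mathcal V)$ presupposes the bundle structure you have not constructed; likewise your claim that a smooth model of the MRC base has $K_Z\equiv 0$ needs an argument in the K\"ahler category (positivity of direct images is only available here because the relevant morphisms are projective). Your final descent step (numerically flat bundles on $E\times S$ come from $E$) does match the paper's Lemma \ref{Lemma 1}, but the two gaps above mean the proposal does not constitute a proof.
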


In the general case of arbitrary dimension, we mainly deal with two cases: In the first case, we assume that the fibers of the Albanese map are simply connected Calabi-Yau manifolds.  In this setting we have the following result:

\begin{bigthm}
Let $X$ be a non-uniruled compact K\"ahler manifold with $-K_X$ nef and $\tilde{q}(X)=q(X)$. If the general fiber of the Albanese map is a Calabi-Yau manifold we have $c_1(X)=0$.
\end{bigthm}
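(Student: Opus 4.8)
The plan is to combine the structure of the Albanese fibration with a positivity versus anti-positivity dichotomy for a single line bundle on the base. We may work directly with the Albanese map $\alpha\colon X\to A:=\Alb(X)$, which, extending the projective results of \cite{Cao19} and using the hypothesis $\tilde{q}(X)=q(X)$ (which prevents extra abelian directions from appearing on finite \'etale covers), is a surjective fibration with connected fibers; write $n=\dim X$ and let $F$ be a general fiber. Restriction of a nef class being nef, $-K_F=-K_X|_F$ is nef, and by hypothesis $F$ is Calabi--Yau, so $\omega_F\cong\OX_F$ and $K_X|_F\equiv 0$ (the non-uniruledness of $X$ is in fact automatic: a rational curve in $X$ maps to a point of the torus $A$, hence lies in a fiber, which would then be uniruled). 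Since $K_A=0$ we have $\omega_{X/A}\cong\omega_X$; using that $\omega_{X/A}$ is trivial on the general fiber, the evaluation map $\alpha^*\alpha_*\omega_{X/A}\to\omega_{X/A}$ is an isomorphism over the dense open subset of $A$ with Calabi--Yau fiber, and a standard argument then yields a numerical decomposition
\[
K_X\ \equiv\ \alpha^*\lambda+[E]\quad\text{in }H^2(X,\RR),
\]
where $\lambda:=c_1(\alpha_*\omega_{X/A})\in H^2(A,\RR)$ and $E\ge 0$ is an effective divisor supported over the non-Calabi--Yau locus (in particular $\alpha$-vertical).

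Two facts then pin down $\alpha^*\lambda$. First, by the positivity of direct images of relative canonical sheaves in the K\"ahler category (Berndtsson--P\u{a}un and its refinements), $\alpha_*\omega_{X/A}$ carries a singular metric with semipositive curvature, so $\lambda$ is pseudoeffective on $A$ and hence $\alpha^*\lambda$ is pseudoeffective on $X$. Second, $-K_X$ is nef, so $-\alpha^*\lambda\equiv(-K_X)+[E]$ is the sum of a nef class and the class of an effective divisor, hence also pseudoeffective on $X$.

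Fixing a K\"ahler class $\omega$ on $X$, pseudoeffectivity of both $\alpha^*\lambda$ and $-\alpha^*\lambda$ forces $\int_X\alpha^*\lambda\wedge\omega^{n-1}=0$; a pseudoeffective class pairing to zero against $\omega^{n-1}$ is represented by a closed positive current of vanishing mass, hence is zero. Thus $\alpha^*\lambda=0$ and $K_X\equiv[E]$, so $-[E]\equiv-K_X$ is nef, which gives $\int_X[E]\wedge\omega^{n-1}\le 0$, while effectivity of $E$ gives the reverse inequality; hence $E=0$. Therefore $K_X\equiv 0$, i.e.\ $c_1(X)=0$.

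The formal part is short; the main obstacle is expected to be analytic, namely the positivity of the direct image $\alpha_*\omega_{X/A}$ in the \emph{non-projective} K\"ahler setting, where the algebraic theory of weak positivity of direct images is unavailable, together with establishing the required structural properties of the Albanese map ($\alpha$ surjective with connected fibers, and verticality of the error divisor) from $-K_X$ nef and $\tilde{q}(X)=q(X)$. Once these are in hand, the interplay between the pseudoeffectivity of $\alpha^*\lambda$ and the nefness of $-K_X$ is the crux, the remaining manipulations being elementary intersection theory against a K\"ahler class.
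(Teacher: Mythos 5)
Your intersection-theoretic endgame is fine, but the proof stands or falls on the one input you do not supply: the pseudoeffectivity of $\lambda=c_1(\alpha_*\omega_{X/A})$ for the Albanese fibration. This is exactly the point the paper identifies as the obstruction in the K\"ahler category: the Albanese map of a compact K\"ahler manifold with $-K_X$ nef need not be a projective morphism (the Calabi--Yau fibers need not be projective), so the weak-positivity/positivity-of-direct-images machinery of the projective case (Viehweg, P\u{a}un--Takayama, and the variants of Berndtsson--P\u{a}un that handle singular fibers) cannot simply be cited. Berndtsson's theorem gives a semipositively curved $L^2$ metric on $\alpha_*\omega_{X/A}$ only over the locus where $\alpha$ is a smooth K\"ahler fibration; to conclude that $\lambda$ is pseudoeffective on all of $A(X)$ one must control the metric (or produce a substitute argument) across the discriminant, and in the non-projective setting this is precisely the unproven analytic step. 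You flag it yourself as ``the main obstacle,'' which means the crux of the theorem is assumed rather than proved. The paper's proof is designed to avoid this tool altogether: it shows via period maps (Griffiths--Schmid extension plus a Liouville/Schwarz-lemma argument on the universal cover of the torus) and infinitesimal Torelli that the Kodaira--Spencer classes vanish, hence the family is locally trivial over a big open set, and then uses the Weil--Petersson interpretation of the curvature of $K_{X/A}$ \cite{BCS20} together with codimension-two comparison of $H^2$ (Lemmas \ref{Cohomology} and \ref{FlatLocus}) to get $c_1(X)=0$; this replaces both \cite{BDPP} and direct-image positivity.

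There is a second, smaller gap in the decomposition $K_X\equiv\alpha^*\lambda+[E]$ with $E\ge 0$. The evaluation map only gives an isomorphism over the open set $U$ of smooth Calabi--Yau fibers, so a priori you only know that $c_1(K_X)-\alpha^*\lambda$ is a real combination, with \emph{unknown signs}, of the divisorial components of $X\setminus\alpha^{-1}(U)$. To get effectivity of $E$ you need the section of $\omega_{X/A}\otimes\alpha^*\bigl((\alpha_*\omega_{X/A})^{**}\bigr)^{-1}$ defined off the preimage of the non-locally-free and non-flat loci to extend holomorphically to $X$, which requires knowing that this preimage has codimension at least two; that is not automatic (fiber dimension can jump) and in the paper it is exactly the content of Lemma \ref{FlatLocus}, which itself uses $-K_X$ nef and P\u{a}un's theorem \cite{Pau12}. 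So your ``standard argument'' needs this codimension control to be legitimate, and without the effectivity of $E$ the sign bookkeeping against $\omega^{n-1}$ collapses. In short: the formal skeleton is the familiar projective-case argument, but the two inputs that make it work in the K\"ahler setting (psef-ness of the direct image class across singular fibers, and the codimension/effectivity control) are precisely what is missing, and the paper's actual proof takes a different, Hodge-theoretic route to sidestep them.
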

Here, the key idea is to use the close relation between the curvature of the relative canonical bundle and deformation theory in order to replace \cite{BDPP} which is not known in the K\"ahler case.
The second result covers the case where the fibers of the Albanese map are projective spaces:
\begin{bigthm}
Let $X$ be a compact K\"ahler manifold such that 
$-K_X$ is nef.
Assume that the general fiber of the Albanese map is $\P^{r-1}$.
Then $X \simeq \P(E)$ for some numerically flat vector bundle $E$ of rank $r$ over $A(X)$ up to some finite \'etale cover.
\end{bigthm}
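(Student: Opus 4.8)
The plan is to show that the Albanese map $a\colon X \to A(X)$ is a $\mathbb{P}^{r-1}$-bundle in the Zariski-locally-trivial sense, identify $X$ with $\mathbb{P}(E)$ for a vector bundle $E$ on $A(X)$, and then prove that $E$ is numerically flat after a finite \'etale base change. First I would analyze the structure of $a$. Since $-K_X$ is nef, $X$ is not of general type and in particular $a$ has no multiple fibers in codimension one (one can invoke the usual argument that a nef anticanonical class forbids the classifying data of a non-trivial orbifold structure on the base, or appeal to the results leading up to the conjecture for the generic behaviour). The general fiber being $\mathbb{P}^{r-1}$, the relative anticanonical bundle $-K_{X/A(X)}$ is $a$-ample, so $a$ is a Mori fiber space onto the abelian variety $A(X)$; relative base-point-freeness gives $X \cong \mathbb{P}(E)$ with $E = a_*(-K_{X/A(X)} \otimes (\text{twist}))$, or more directly $E = (a_*\mathcal{O}_X(1))^\vee$ once a relative $\mathcal{O}(1)$ is fixed over the locus where $a$ is smooth and then extended. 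The first technical point is to rule out jumping of the fiber type and to ensure $a$ is actually smooth everywhere (equidimensional with all fibers $\mathbb{P}^{r-1}$): here the nefness of $-K_X$ combined with the fact that the base is an abelian variety with trivial canonical bundle forces $K_X = a^*K_{A(X)} + K_{X/A(X)} = K_{X/A(X)}$, and $-K_{X/A(X)}$ nef plus $a$-ample on a Mori fiber space over an abelian variety is rigid enough to conclude (via deformation-of-rational-curves arguments, as in the Kebekus--Sol\'a Conde--Toma / Cao--H\"oring circle of ideas) that $a$ is a smooth $\mathbb{P}^{r-1}$-fibration.

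Next I would pass to the numerical flatness of $E$. Projectivizing, $\mathcal{O}_{\mathbb{P}(E)}(1)$ can be normalized so that $\det E$ is numerically trivial after replacing $E$ by $E \otimes L$ for a suitable line bundle $L$ on $A(X)$ (abelian varieties carry enough numerically trivial line bundles, and $\mathrm{Pic}^0$ is divisible, so one can absorb $\det E$ up to finite \'etale cover by an isogeny of $A(X)$ — this is where the "up to finite \'etale cover" enters). Then I would compute $-K_X = -K_{\mathbb{P}(E)/A(X)} = r\,\xi - a^*\det E$ where $\xi = c_1(\mathcal{O}_{\mathbb{P}(E)}(1))$; with $\det E$ numerically trivial this is $r\xi$ up to numerically trivial classes, so $\xi$ itself is nef. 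Nefness of $\mathcal{O}_{\mathbb{P}(E)}(1)$ is precisely the statement that $E$ is a nef vector bundle; combined with $c_1(E) = 0$ (numerically) this gives that $E$ is numerically flat by the standard characterization (nef with numerically trivial determinant $\Rightarrow$ numerically flat, since then $E^\vee$ is also nef).

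The main obstacle I expect is the first step: proving that the Albanese map is an honest smooth $\mathbb{P}^{r-1}$-bundle rather than merely a fibration with general fiber $\mathbb{P}^{r-1}$, in the K\"ahler (non-projective) category where one cannot freely use positivity of direct images or the full force of the MMP. The intended "more topological proof" hinted at in the introduction presumably bypasses this by a Leray spectral sequence / Hodge-theoretic argument: the fibration $a$ with fiber $\mathbb{P}^{r-1}$ has $R^i a_* \mathbb{Q}$ computable, $\pi_1$ of the fiber is trivial so the monodromy is trivial, and one gets $H^2(X,\mathbb{Q}) = H^2(A(X),\mathbb{Q}) \oplus \mathbb{Q}\xi$; then the nef class $-K_X$ decomposed in this splitting, together with the fact that its restriction to each fiber is $\mathcal{O}(r)$, pins down $X$. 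Making the topological splitting interact correctly with the holomorphic structure — i.e. upgrading $H^2$-information to the existence of a global relative $\mathcal{O}(1)$ and to numerical flatness of $E$ — is the delicate part, and is presumably where the bulk of the paper's argument in this section will be spent; I would follow that route, using the Calabi--Yau / numerically-flat structure theory on $A(X)$ and the characterization of numerically flat bundles via filtrations by Hermitian-flat bundles to finish.
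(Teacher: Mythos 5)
Your proposal has a genuine gap at exactly the step you yourself flag as the main obstacle, and the way you propose to close it would not work in this setting. You assert that, because $-K_{X/A(X)}$ is $a$-ample and the base is a torus, Mori-fiber-space rigidity plus relative base-point-freeness and deformation-of-rational-curves arguments (``Cao--H\"oring circle of ideas'') force the Albanese map to be an everywhere smooth $\P^{r-1}$-bundle, from which $X\simeq\P(E)$ globally. But in the K\"ahler category $A(X)$ is only a complex torus, $\alpha$ is not a priori projective (its projectivity has to be \emph{proved}, which the paper does via a Leray--Hirsch computation showing $-K_X$ is $\alpha$-ample), and the MMP/relative base-point-freeness/positivity-of-direct-images machinery you invoke is precisely the projective technology the theorem is meant to do without. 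Degeneration of the fiber type over a small bad locus is not excluded by any argument you give, and the paper in fact never proves global smoothness directly: it only obtains a $\P^{r-1}$-bundle structure over $A(X)\setminus Z$ with $\codim Z\geq 3$, by combining Cao's smoothness in codimension one, the control of the non-flat locus (Lemma \ref{FlatLocus}), and the rigidity of $\P^{r-1}$ under flat degenerations from \cite{ARM14}; the Brauer obstruction to writing this bundle as $\P(E)$ is then killed on $A(X)\setminus Z$ by an isogeny (Lemma \ref{Lem4.1}, Lemma \ref{Lem4.2}), which is where the finite \'etale cover enters --- not through divisibility of $\mathrm{Pic}^0$ as you suggest.

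The second missing block is everything that happens across $Z$. Your numerical-flatness argument (nef $\cO_{\P(E)}(1)$ plus $c_1(E)=0$ implies $E$ numerically flat) is fine, but it presupposes a global vector bundle on $A(X)$, which at this stage does not exist. The paper extends $E$ across $Z$ only as a reflexive sheaf $\cF$, using the Bando--Siu removable singularity theorem applied to $\cO_{\P(E)}(1)$ endowed with the metric coming from $-K_X/r$, then proves that $\cF$ is numerically flat (hence locally free) via the equality case of the Bogomolov--Gieseker inequality in the stable case and a Harder--Narasimhan induction \`a la \cite{CCM21} otherwise, with the extension classes across $Z$ handled by \cite[Lemma 4]{numer_flat}; finally it must still show that the resulting bimeromorphic map $X\dashrightarrow\P(\cF)$, an isomorphism in codimension one, is an actual isomorphism (following \cite[Lemma 6.39]{KM98}). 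None of these steps is sketched in your proposal, and without them the passage from ``general fiber $\P^{r-1}$'' to ``$X\simeq\P(E)$ with $E$ numerically flat'' is not established.
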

This result was  known previously by \cite{CH17}. Here we give a completely different and more topological proof of it.
The key observation is that using the results of \cite{Cao13} and \cite{ARM14}, the Albanese morphism is smooth in codimension 2 and the manifold is the projectivization of some vector bundle in codimension 1.
Using the result of \cite{BS94}, we can extend the vector bundle to a numerically flat vector bundle over the Albanese torus by a careful control of its first and second Chern classes using the codimension condition.

When the fibers are rational surfaces, we prove a structure theorem assuming the smoothness of the Albanese morphism and a condition on the intersection number. It is a special case of \cite{CH17}.
However, we find that its proof has its own interest using only deformation theory, cf. also \cite[Corollary 3.4]{PS98}.
\begin{bigthm}
 Let $X$ be a uniruled compact K\"ahler manifold of dimension $n$ such that
$\tilde{q}(X)=q(X)=n-2$ and
$-K_X$ is nef.
Assume that the Albanese morphism $\alpha: X \to A(X)$ is smooth.
Moreover, assume that for any fiber $F$ of $\alpha$,
$$(K_F)^2 \geq 2.$$
Then, up to some finite \'etale cover, the possibilities of $X$ are one of the following:
\newpage
\begin{enumerate}
 \item $X \simeq \P(E)$ for some numerically flat vector bundle (of rank 3) over $A(X)$;
 \item $X \simeq \P(E_1) \times_{A(X)} \P(E_2)$ for some numerically flat vector bundles $E_1$ and $E_2$ (of rank 2) over $A(X)$;
 \item we have a factorisation
$$X \xrightarrow{\gamma_1} X_1 \cdots \xrightarrow{\gamma_k} X_k \xrightarrow{\rho}  A(X)$$
with $\gamma_j$ blow-ups of  \'etale multi-sections, $X_k \simeq \P(E_1) \times_{A(X)} \P(E_2)$ for some vector bundles $E_1,E_2$ over $A(X)$ or $X_k \simeq \P(E)$ for some vector bundle over $A(X)$.
\end{enumerate}
Moreover, the Albanese morphism is locally constant in this case.
\end{bigthm}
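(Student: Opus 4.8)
The plan is to combine the classification of uniruled surfaces with a relative minimal model program over $A:=A(X)$, carried out by hand through explicit blow-downs, and a deformation-theoretic analysis of the vertical $(-1)$-curves. First, since $\alpha$ is smooth and $A$ is a torus, $K_{X/A}=K_X$; moreover any rational curve is contracted by $\alpha$, so through a general point of a general fiber $F$ there passes a rational curve contained in $F$, i.e.\ $F$ is a uniruled surface, and $-K_F=-K_X|_F$ is nef. A uniruled surface with nef anticanonical class and $(K_F)^2\ge 2$ is rational --- a smooth ruled surface over a curve of genus $g$ has $(K)^2\le 8(1-g)$ --- hence $F$ is a weak del Pezzo surface of degree $d:=(K_F)^2\in\{2,\dots,9\}$. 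Thus $-K_{X/A}=-K_X$ is $\alpha$-nef and $\alpha$-big, $\alpha$ is a smooth weak del Pezzo fibration, and $\tilde{q}(X)=q(X)$ lets us pass freely to and descend along finite \'etale covers of $A$. Now if $C\subset F$ is a $(-1)$-curve then $N_{F/X}|_C=\alpha^*T_A|_C\simeq\OX_{\P^1}^{\oplus(n-2)}$ and $N_{C/F}\simeq\OX_{\P^1}(-1)$, so
\[
0\to\OX_{\P^1}(-1)\to N_{C/X}\to\OX_{\P^1}^{\oplus(n-2)}\to 0
\]
yields $h^0(C,N_{C/X})=n-2$ and $h^1(C,N_{C/X})=0$. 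Hence the Douady space of $X$ is smooth of dimension $n-2$ at $[C]$; since a deformation of $C$ is again a rational curve and so remains $\alpha$-vertical, and the $(-1)$-curves inside a fixed weak del Pezzo surface are finite in number and rigid, each relevant component maps finitely and \'etale onto $A$, and the union of a family of vertical $(-1)$-curves of a fixed numerical class is a divisor $D\subset X$ which is a $\P^1$-bundle over a finite \'etale multisection of $\alpha$ with $\OX_D(D)$ of degree $-1$ along each ruling.

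By the Fujiki--Nakano contractibility criterion such a $D$ is the exceptional divisor of a blow-down $\gamma_1\colon X\to X_1$ onto a smooth K\"ahler manifold $X_1$, the center being an \'etale multisection of the again-smooth weak del Pezzo fibration $X_1\to A$, of strictly larger fiber degree. Iterating produces a tower
\[
X=X_0\xrightarrow{\gamma_1}X_1\xrightarrow{\gamma_2}\cdots\xrightarrow{\gamma_k}X_k\to A
\]
of blow-downs of \'etale multisections; it terminates because the fiber degree strictly increases at each step and is bounded by $9$, and at the end $X_k\to A$ admits no vertical $(-1)$-curve, so every fiber of $X_k$ is $\P^2$, $\P^1\times\P^1$, or $\mathbb{F}_{2}$.

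When the fibers of $X_k$ are $\P^2$, $X_k$ is a $\P^2$-bundle over $A$, i.e.\ $X_k\simeq\P(E)$ with $\rank E=3$; when they are $\P^1\times\P^1$, the two rulings extend to relative $\P^1$-bundle structures $X_k\to Y_i\to A$ with each $Y_i\to A$ a $\P^1$-bundle, and one identifies $X_k\simeq Y_1\times_A Y_2\simeq\P(E_1)\times_A\P(E_2)$. If no blow-down was needed (so $X_k=X$), I upgrade the bundles to numerically flat ones using $-K_X=-K_{X/A}$ nef together with $\tilde{q}(X)=q(X)$ --- restricting $-K_X$ to curves controls the slopes, and the equality of irregularities rules out a nontrivial twist by a quotient torus of $A$ --- giving cases (1) and (2), the $\P^2$ case being exactly the structure result proved above for Albanese maps with projective-space fibers. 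If at least one blow-down occurred, the tower above exhibits $X$ as in case (3).

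For the final assertion one uses that projectivizations of numerically flat vector bundles over an abelian variety, and their fiber products, are locally constant fibrations, and that blowing up a locally constant \'etale multisection (which becomes a disjoint union of translates after a further finite \'etale base change) in a locally constant fibration again yields one; running this down the tower shows the Albanese morphism of $X$ is locally constant. I expect the principal difficulties to be: (i) verifying that each step really stays in the smooth K\"ahler category and is the blow-up of a smooth codimension-two \'etale multisection --- the deformation computation and Fujiki--Nakano handle contractibility, but K\"ahlerness of the blown-down spaces needs attention; (ii) deducing numerical flatness of the bundles in cases (1)--(2) (and sufficient control of $X_k$ in case (3)), where the fiberwise geometry alone is insufficient and $-K_X$ nef together with $\tilde{q}(X)=q(X)$ must enter essentially; and (iii) passing from the bimeromorphic classification to the statement that $\alpha$ is \emph{locally constant}. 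A lower-order but genuine point is the precise bookkeeping of which weak del Pezzo surfaces occur as fibers, in particular the role of the $\mathbb{F}_{2}$-fibers within the stated list.
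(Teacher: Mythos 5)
Your outline follows the same overall strategy as the paper's proof of Proposition~\ref{PS3.4} (classify the fibers as weak del Pezzo surfaces, deform vertical $(-1)$-curves, blow down after an \'etale base change, reduce to minimal fibers, conclude via the $\P^2$ and $\P^1\times\P^1$ bundle results), but the crucial step is asserted rather than proved, and it is precisely the step where the hypothesis $(K_F)^2\geq 2$ must do its real work. The normal bundle computation only gives smoothness of the Douady space at the initial point $[C]$ and local \'etaleness over $A(X)$; it does not show that the component through $[C]$ is everywhere \'etale over $A(X)$, nor that the swept-out divisor is a $\P^1$-bundle over an \'etale multisection. Over special points the limit cycle can a priori be reducible or non-reduced, of the form $C_0+\sum_{i\geq 1}a_iC_i$ with $-K\cdot C_0=1$ and the $C_i$ being $(-2)$-curves --- weak del Pezzo fibers do contain $(-2)$-curves --- and at such a point neither the $\P^1$-bundle structure nor the Fujiki--Nakano contraction survives; rigidity of $(-1)$-curves inside each fiber does not exclude this. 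Ruling out these degenerations is the heart of Proposition~\ref{PS04} (Claims 2--4): one picks $C$ with $\omega\cdot C$ minimal among $(-1)$-curves and exploits $(K_F+C)\cdot\omega<0$, which is exactly what $(K_F)^2\geq 2$ together with nefness of $-K_X$ provides; the degree-one del Pezzo example in the paper shows how delicate this point is (it is where \cite{PS98} itself has a gap). In your write-up $(K_F)^2\geq 2$ is used only to say the fibers are weak del Pezzo of degree at least $2$, so the essential mechanism is missing and the construction of the blow-down divisor fails as written.

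The difficulties you flag as (i) and (iii) are also genuine and are resolved in the paper by specific devices you do not supply. K\"ahlerness of the blown-down spaces is not established step by step; instead the paper keeps the intermediate $X_i$ only in class $\mathcal{C}$ and replaces the K\"ahler class by the big class $\pi_*(-K_X+\epsilon\omega)$, whose restriction to the fibers is big and nef in codimension one (Remark~\ref{Blow-Down}), in order to iterate, deducing K\"ahlerness of the $X_i$ only at the end from the structure of $X_k$. The local constancy is not obtained by your ``blow-ups of locally constant multisections stay locally constant'' argument (which would require knowing the centers become flat sections after an \'etale cover --- not established), but by applying \cite[Proposition 2.5]{MW} together with \cite[Theorem 2.6]{MW23}: one exhibits a relatively ample pseudoeffective line bundle, namely $m\pi^*(-K_{X_k})-E=-mK_X+(m-1)E$. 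Likewise, passing from fiberwise structure to actual vector bundles up to finite \'etale cover requires the Brauer-group arguments of \cite[Lemmas 7.4 and 9.4]{CP91} and \cite{El82}, which you only gesture at. Finally, your worry about $\mathbb{F}_2$-fibers at the end of the tower is legitimate (the paper's dichotomy ``minimal rational $\Rightarrow\P^2$ or $\P^1\times\P^1$'' silently omits it), but that issue is orthogonal to the main gap above.
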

The concept of locally trivial morphism is introduced in \cite{MW}.

At the end, we discuss the algebraic approximation of compact K\"ahler manifolds with nef anticanonical line bundle.
Recall that $X$ is called to be algebraically approximated if there exists a deformation of $\mathcal{X} \to \Delta$ over the unit disk such
that the central fiber $X_0$ is $X$, and there exists a sequence $t_i \to 0$ in $\Delta$ such that all
the fibers $X_{t_i}$ are projective.

We have the following structural conjecture which is proven up to dimension 3 in \cite{MW23}. It is settled completely in the projective case in \cite{CH19}.
\begin{bigconj}\label{conj-main}
Let $X$ be a compact K\"ahler manifold with the nef anti-canonical bundle. 
There exists a fibration $\varphi: X \to Y$ with the following properties: 
\begin{itemize}{}
\item $\varphi: X \to Y$ is a locally constant fibration (i.e. $\varphi:X \simeq (\tilde{Y} \times F)/ \pi_1(Y) \to Y=\tilde{Y}/\pi_1(Y) $ is induced by the natural projection where $\tilde{Y}$ is the universal cover of $Y$, $F$ the fiber of $\varphi$ together with a representation $\rho: \pi_1(X) \to \operatorname{Aut}(F)$); 
\item $Y$ is a compact K\"ahler manifold with $c_{1}(Y)=0$; 
\item $F$ (being the fiber of $\varphi: X \to Y$)  is rationally connected. 
\end{itemize}
\end{bigconj}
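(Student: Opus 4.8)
The plan is to take $\varphi$ to be the MRC (maximal rationally connected) fibration of $X$, which is the only natural candidate: its general fibre $F$ is rationally connected by construction, and its base $Y$ (a smooth model of the MRC quotient) is non-uniruled. The three bullet points then become three tasks: (i) realize the MRC quotient as an honest morphism onto a compact K\"ahler manifold $Y$; (ii) prove $c_1(Y)=0$; and (iii) upgrade $\varphi$ to a locally constant fibration in the sense of \cite{MW}. Note that since rationally connected fibres contribute nothing to $H^1$, the Albanese map of $X$ factors through $Y$, so this picture is compatible with the Albanese-map results of the paper.

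Task (ii) is the heart of the matter. Since $-K_X$ is nef and the fibres are rationally connected, the anticanonical positivity should concentrate along the fibres and leave $K_Y$ numerically trivial. Concretely, I would first show that $-K_Y$ is nef (equivalently pseudoeffective) by pushing forward the positivity of $-K_X$. One then wants to combine this with non-uniruledness of $Y$ to conclude $K_Y \equiv 0$. In the projective case this is exactly where \cite{BDPP} enters: non-uniruledness gives that $K_Y$ is pseudoeffective, and a class that is pseudoeffective together with its negative must be numerically trivial, so $K_Y \equiv 0$. Since BDPP is unavailable in the K\"ahler category, I would instead use the curvature/deformation method underlying Theorem~B: equip the relative canonical bundle $K_{X/Y}$ with the singular Hermitian metric induced by the fibrewise Weil--Petersson data, and exploit its positivity to force $K_Y$ to be pseudoeffective directly, bypassing BDPP.

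For task (iii), once $c_1(Y)=0$ the relative canonical bundle $K_{X/Y}=K_X-\varphi^* K_Y$ inherits the nef anticanonical positivity of $X$, which I expect to make the family $\{F_y\}$ infinitesimally rigid: the Kodaira--Spencer map should vanish, so that $\varphi$ is isotrivial. Pulling back to the universal cover $\tilde{Y}$ then trivializes the family as $\tilde{Y}\times F$, and the monodromy produces the representation $\rho:\pi_1(X)\to\operatorname{Aut}(F)$ required by the definition of a locally constant fibration, so that $X\simeq(\tilde{Y}\times F)/\pi_1(Y)$.

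The main obstacle is task (ii), specifically the absence of a K\"ahler analogue of \cite{BDPP}. The curvature/deformation substitute runs smoothly when the fibres are Calabi-Yau, but for general rationally connected $F$ one must control the positivity of direct images of twisted pluricanonical sheaves over a possibly non-projective base $Y$, precisely the situation where the Berndtsson--P\u{a}un-type theory is not yet available. A secondary but genuine difficulty lies in task (i): in the non-projective K\"ahler world the MRC map need not be almost holomorphic a priori, and resolving its indeterminacy while preserving both K\"ahlerness of $Y$ and the nef anticanonical hypothesis is itself delicate.
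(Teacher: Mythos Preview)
The statement you are attempting to prove is labeled \texttt{bigconj} in the paper: it is stated as an open \emph{conjecture}, not as a theorem, and the paper does not provide a proof. The authors write explicitly that it ``is proven up to dimension 3 in \cite{MW23}'' and ``settled completely in the projective case in \cite{CH19}'', but remains open for compact K\"ahler manifolds in general. The paper only \emph{uses} the conjecture as a hypothesis in Proposition~\ref{AlgApprox}.

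Your outline is the natural strategy---indeed it is essentially the Cao--H\"oring scheme that works in the projective case---and you have correctly located the genuine obstructions. Task~(ii) really is the core difficulty: pushing forward the nefness of $-K_X$ to conclude $-K_Y$ psef requires positivity of direct images over a possibly non-projective base, and then concluding $K_Y$ psef from non-uniruledness is precisely the missing K\"ahler BDPP. Your proposed substitute (the Weil--Petersson/curvature argument) is exactly what the paper exploits in Proposition~\ref{CY} and Proposition~\ref{Prop3}, but only when the fibres are Calabi--Yau; for rationally connected fibres there is no Weil--Petersson metric computing $c_1(K_{X/Y})$, so this replacement does not go through as stated. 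Task~(i) is also a real issue, as you note: the holomorphicity of the MRC fibration and the K\"ahlerness of its base are not known in this generality.

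In short, there is no gap in your reasoning to point to because you have not claimed a proof; you have given a correct diagnosis of why the conjecture is open, and there is nothing in the paper to compare it against.
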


Assuming this conjecture, we can show the following result:

\begin{bigthm}
If Conjecture 2 holds true, then
any compact K\"ahler manifold $X$ with nef anticanonical divisor can be algebraically approximated.
\end{bigthm}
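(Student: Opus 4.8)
The plan is to feed the structure provided by Conjecture 2 into the (essentially known) algebraic approximation of manifolds with $c_1=0$, and to transport that approximation to $X$ along the locally constant fibration.

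First I would apply Conjecture 2 to obtain a locally constant fibration $X \simeq (\tilde Y \times F)/\pi_1(Y) \to Y$, attached to a representation $\rho\colon \pi_1(Y)\to\operatorname{Aut}(F)$, with $c_1(Y)=0$ and $F$ rationally connected. Since $F$ is rationally connected it satisfies $h^{p,0}(F)=0$ for all $p>0$ and $\pi_1(F)=0$; the vanishing of $h^{2,0}(F)$ shows that $H^2(F,\mathbb{R})$ consists of $(1,1)$-classes, so rational classes are dense in it and $F$ is projective, while $\operatorname{Pic}(F)=H^2(F,\mathbb{Z})$ with the ample cone open in $H^2(F,\mathbb{R})$. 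The key point is then that $\rho$ preserves a polarization of $F$: restricting a Kähler form of $X$ to the fibres of $X\to Y$ yields, by Stokes' theorem applied to the locally trivial structure, a class in $H^2(F,\mathbb{R})$ that is monodromy-invariant and lies in the ample cone; as the $\rho$-invariant subspace is defined over $\mathbb{Q}$ and the ample cone is open, we get a $\rho$-invariant ample class, hence (after passing to a multiple) a very ample line bundle $L$ on $F$ with $\rho$-invariant class. The group of automorphisms of $F$ fixing $[L]$ acts on $\mathbb{P}^N:=\mathbb{P}(H^0(F,L)^{\vee})$ through $\operatorname{PGL}_{N+1}$, so after conjugating $\rho$ into $\operatorname{PGL}_{N+1}$ the embedding $F\hookrightarrow\mathbb{P}^N$ is $\rho$-equivariant, and $X$ becomes a closed submanifold of the locally constant bundle $\mathcal{P}:=(\tilde Y\times\mathbb{P}^N)/\pi_1(Y)\to Y$. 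This bundle carries the relative anticanonical line bundle $\omega_{\mathcal{P}/Y}^{-1}$ — globally defined because $\omega_{\mathbb{P}^N}$ is canonically $\operatorname{PGL}_{N+1}$-linearized — which is ample on the fibres, so $\mathcal{P}\to Y$ is a projective morphism. In particular $X$ is projective if and only if $Y$ is; more importantly, the whole configuration will move together with $Y$.

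Next I would invoke the algebraic approximation of $Y$. Because $c_1(Y)=0$, the Beauville–Bogomolov decomposition (available for compact Kähler manifolds with torsion canonical bundle) writes a finite étale cover $\hat Y$ of $Y$ as a product of a complex torus, strict Calabi–Yau manifolds and irreducible hyperkähler manifolds; strict Calabi–Yau manifolds are projective, while complex tori and hyperkähler manifolds are algebraically approximable, and one can run these approximations equivariantly for the finite deck group of $\hat Y\to Y$ and then descend (averaging a polarization). This yields a deformation $\mathcal{Y}\to\Delta$ with $\mathcal{Y}_0=Y$ and $Y_{t_i}$ projective for some $t_i\to 0$. Since $\pi_1$ is locally constant along $\mathcal{Y}\to\Delta$, the representation $\rho$ defines $\pi_1(Y_t)\to\operatorname{PGL}_{N+1}$ for every $t$; forming the fibrewise universal cover $\widetilde{\mathcal{Y}}$ and the quotients $\mathcal{P}:=(\widetilde{\mathcal{Y}}\times\mathbb{P}^N)/\pi_1$ and $\mathcal{X}:=(\widetilde{\mathcal{Y}}\times F)/\pi_1\subset\mathcal{P}$ produces proper smooth families over $\Delta$ with central fibres $\mathcal{X}_0=X$, $\mathcal{P}_0=\mathcal{P}$, and with $\mathcal{X}_t\hookrightarrow\mathcal{P}_t=(\widetilde{Y_t}\times\mathbb{P}^N)/\pi_1(Y_t)$ for all $t$.

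Finally, for $t=t_i$ the morphism $\mathcal{P}_{t_i}\to Y_{t_i}$ is again projective (relative anticanonical bundle) and $Y_{t_i}$ is projective, hence $\mathcal{P}_{t_i}$ is projective; as $\mathcal{X}_{t_i}$ is a closed analytic submanifold of a projective manifold it is projective by Chow's theorem. Thus $\mathcal{X}\to\Delta$ is an algebraic approximation of $X$, which proves the theorem. I expect the main technical obstacle to be the input on $Y$: one needs the Beauville–Bogomolov splitting in the compact Kähler category and, crucially, an equivariant algebraic approximation of the torus and hyperkähler factors with respect to the finite group of the splitting cover so that it descends to $Y$ itself. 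The conceptual core, on the other hand, is the observation that Kählerness of $X$ forces $\rho$ to fix a polarization of $F$; this is what realizes $X$ as a closed subvariety of a relatively projective locally constant bundle and lets projectivity be transported from the base.
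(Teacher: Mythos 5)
Your proposal is essentially correct, but it replaces the paper's key mechanism by a different one. The paper constructs the same family $\mathcal{X}=(\widetilde{\mathcal{Y}}\times F)/\pi_1(Y)\to\Delta$ (checking via Ehresmann that the fibres of the universal cover of the total space $\mathcal{Y}$ are the universal covers of the $Y_t$), and then transports projectivity from $Y_{t_i}$ to $X_{t_i}$ by a general lemma: if the MRC quotient of a compact K\"ahler manifold is projective, the manifold is projective (proved via Campana's criterion, Bertini, and showing $H^0(X_C,\Omega^2_{X_C})=0$ over general complete-intersection curves $C$, using that rationally connected fibres carry no holomorphic forms). You instead produce a $\rho$-invariant rational K\"ahler class on $F$ (restriction of a K\"ahler class of $X$ is monodromy-invariant; the invariant subspace of $H^2(F,\Z)$ is rational and the K\"ahler cone is open since $h^{2,0}(F)=0$), hence a $\rho$-equivariant projective embedding $F\hookrightarrow\P^N$, realizing $X$ as a closed submanifold of the locally constant $\P^N$-bundle $(\tilde Y\times\P^N)/\pi_1(Y)$, which is a projective morphism over $Y$; projectivity of $Y_{t_i}$ then gives projectivity of $X_{t_i}$ by Chow. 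This works (note that $F$ rationally connected K\"ahler gives $h^{p,0}(F)=0$, projectivity and $\pi_1(F)=0$, so $H^2(F,\Z)$ is torsion-free and $\operatorname{Pic}(F)\simeq H^2(F,\Z)$, which is what makes the linearization into $\operatorname{PGL}_{N+1}$ unproblematic), and it has the extra payoff that $X\to Y$ is seen to be a projective morphism; the paper's lemma is more flexible in that it needs no equivariant polarization and applies to any fibration with rationally connected fibres over a projective base. The one place where your write-up is weaker than the paper is the input on $Y$: the paper simply invokes the known algebraic approximation of compact K\"ahler manifolds with $c_1=0$, whereas your proposed derivation via the Beauville--Bogomolov decomposition requires an equivariant approximation of the torus and hyperk\"ahler factors that descends through the finite \'etale cover, which you correctly flag but do not justify; quoting the approximation statement for $c_1(Y)=0$ directly removes this gap.
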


\subsection*{Acknowledgments}\label{subsec-ack}
The authors would like to thank Fr\'ed\'eric Campana, Patrick Graf, Shin-ichi Matsumura, Mihai P\u{a}un and Thomas Peternell for kindly answering our questions.
The second author was supported by the DFG Projekt
\emph{Singul\"are hermitesche Metriken f\"ur Vektorb\"undel und Erweiterung kanonischer Abschnitte} managed by Mihai P\u{a}un. \\

\newpage

\section{Structure of the Albanese map for 4-folds when the base is a curve}
\begin{prop} \label{Prop1}
Let $X$ be a non-projective compact K\"ahler fourfold with $-K_X$ nef such that $\tilde{q}(X)=q(X)=1$.
Then up to a finite \'etale cover, $X$ is either the product of a K3 surface and a projectivization of a rank two numerically flat vector bundle over an elliptic curve or the product of a simply connected Calabi-Yau threefold with an elliptic curve.
\end{prop}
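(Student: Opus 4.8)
The plan is to run the recently-developed $4$-dimensional K\"ahler MMP of \cite{DHP23} on $X$ and analyze the output together with the Albanese fibration $\alpha\colon X\to A(X)$, where $A(X)$ is an elliptic curve. Since $-K_X$ is nef, every step of the MMP either contracts a $-K_X$-trivial extremal ray or produces a Mori fiber space; because $X$ is non-projective and $q(X)=1$, the abundance-type and base-point-freeness inputs available in \cite{DHP23} should force the MMP to terminate with a structure closely tied to $\alpha$. First I would observe that $\alpha$ is surjective with connected fibers onto the elliptic curve, so the general fiber $F$ is a compact K\"ahler threefold with $-K_F$ nef; by adjunction and the case $q=\tilde q$, $F$ has $q(F)=0$. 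Running the MMP relatively over $A(X)$, or analyzing the absolute MMP and tracking how contractions interact with $\alpha$, one reduces to understanding threefolds $F$ with $-K_F$ nef and $q(F)=0$: by the three-dimensional structure results (cf. \cite{MW23}) and the classification of minimal models, $F$ is, up to \'etale cover and birational modification, either a simply connected Calabi-Yau threefold or rationally connected, and in the latter case the nef anticanonical condition with the constraint coming from fibering over the curve pins it down.

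The key dichotomy is whether $X$ is uniruled. If $X$ is \emph{not} uniruled, then by Theorem~C (applied with the general Albanese fiber being Calabi-Yau, which one must first verify from the non-uniruled threefold fiber having $q=0$ and $-K_F$ nef hence $c_1(F)=0$) we get $c_1(X)=0$, so $X$ is Calabi-Yau; the Beauville--Bogomolov decomposition of a finite \'etale cover, combined with $\tilde q(X)=1$ and $\dim X=4$, leaves exactly the factor $E\times(\text{simply connected CY}_3)$, giving the second alternative. If $X$ \emph{is} uniruled, then the MRC fibration is nontrivial; using $-K_X$ nef and the results on the MRC quotient (the relevant inputs from \cite{Cao19}, \cite{CH17}), the MRC quotient of $X$ has $c_1=0$ and, having $\dim\le 2$ over the base direction compatible with $q=1$, is an elliptic curve or the Albanese itself, forcing the general Albanese fiber to be rationally connected of dimension $3$. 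A rationally connected K\"ahler threefold with $-K$ nef that fibers suitably is, after \'etale cover, a $\P^1$-bundle over an abelian surface or similar; chasing the numerical flatness via Theorem~D-type arguments and the constraint $\tilde q = q = 1$ should yield that $X$ is, up to \'etale cover, $S\times\P(V)$ with $S$ a K3 surface and $V$ a rank-two numerically flat bundle over the elliptic curve $A(X)$ — the first alternative.

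The main obstacle I anticipate is the uniruled case: showing that the uniruled fourfold with $-K_X$ nef and $q=1$ really splits (up to \'etale cover) as a \emph{product} of a K3 surface with a $\P^1$-bundle over the elliptic curve, rather than merely fibering over the curve with such fibers. This requires a splitting theorem: one must produce a $\pi_1$-invariant product structure on the universal cover. I expect to obtain this by combining (i) the locally-trivial/locally-constant nature of the Albanese map — which should follow from \cite{MW} and the numerical flatness once the fiber type is identified — with (ii) a second fibration (the MRC or a $\P^1$-bundle structure) whose base is the K3 surface, and then invoking a Beauville--Bogomolov-style decomposition for the fiber $F$ (a K3 times $\P^1$-bundle-free model) together with the fact that $\operatorname{Aut}$ of a K3 surface times $\P^1$ acts reducibly, so the monodromy representation $\rho\colon\pi_1(A(X))\to\operatorname{Aut}(F)$ decomposes after a finite-index subgroup. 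Verifying that the two fibrations are "independent" enough to split $X$, and controlling the finite \'etale cover carefully, is the delicate point; the rest is bookkeeping with Euler characteristics, Hodge numbers, and the already-established structure theory for dimension $\le 3$.
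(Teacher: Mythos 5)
There is a genuine gap in your uniruled case. You assert that if $X$ is uniruled then the MRC quotient is ``an elliptic curve or the Albanese itself'', forcing the general Albanese fiber to be rationally connected of dimension $3$. That is precisely the case that cannot occur under the hypotheses: if the general fiber $F$ of $\alpha$ were rationally connected, then $H^{1,0}(F)=H^{2,0}(F)=0$, and since the base is a curve this gives $H^{2,0}(X)=0$, so $X$ would be projective by Kodaira's criterion, contradicting the non-projectivity assumption. Note that in the first alternative of the statement, $X\simeq S\times\P(V)$ with $S$ a K3 surface and $V$ of rank two over the elliptic curve, the Albanese fibers are $S\times\P^1$ --- uniruled but \emph{not} rationally connected --- and the rational quotient of $X$ is the threefold $S\times A(X)$, not the elliptic curve. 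So your route in the uniruled case (MRC base $=$ Albanese, RC fibers, then ``Theorem D-type'' arguments) starts from a false reduction, and the splitting you flag as the main obstacle, via a monodromy representation into $\operatorname{Aut}(F)$, is not the mechanism that works here; also, the projective MRC results you invoke (e.g.\ \cite{Cao19}) are not available in the K\"ahler setting, which is the whole difficulty.

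What the paper does instead: $\alpha$ is smooth (\cite{Cao13}, base of dimension one) and $\tilde q(F)=0$; after an \'etale cover $\pi_1(X)$ is free abelian, and by \cite{MW23} plus the homotopy exact sequence the fibers are simply connected, hence each $F$ is a CY threefold, $\P^1\times$K3, or rationally connected, with the last case excluded by non-projectivity as above. Running the relative $K_X$-MMP over $A(X)$ (\cite{DHP23}), which restricts fiberwise to the $K_F$-MMP, either $\beta$ is an isomorphism (CY fibers, and then Proposition \ref{CY} gives $c_1(X)=0$, the second alternative), or $\beta:X\to Y$ is a conic bundle onto a smooth threefold $Y$ fibered in K3 surfaces over $A(X)$ with $c_1(Y)=0$ (non-uniruledness of $Y$ plus the conic bundle formula), so $Y\simeq$ K3$\,\times\,$elliptic curve up to \'etale cover; then $-K_X=L^{\otimes 2}$, $X=\P(\beta_*L)$, and $\beta_*L$ is numerically flat ($c_1=0$ and weakly positively curved), so by the K\"unneth-type Lemma \ref{Lemma 1} it is pulled back from the elliptic curve because the K3 factor is simply connected --- this is what yields the product structure, with no monodromy argument needed. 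Your non-uniruled branch (Proposition \ref{Prop3} plus Beauville--Bogomolov, after checking $K_F$ pseudoeffective for the non-uniruled threefold fibers) is essentially sound and parallels the paper, but the uniruled half as written would fail.
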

\begin{proof}
Let $\alpha: X \to A(X)$ be the Albanese morphism which is smooth by \cite{Cao13}, since the base is of dimension one.
In particular, by \cite[ Proposition 3.12]{DPS94}, $\tilde{q}(F)=0$ where $F$ is any fiber. 
Note that if $K_X$ is $\Q-$effective, $c_1(X)=0$ and $X$ is the product of a simply connected Calabi-Yau threefold with an elliptic curve up to a finite \'etale cover.
In the following, we always assume that $K_X$ is not $\Q-$effective.
Up to a finite \'etale cover, we can assume that $\pi_1(X)$ is free abelian, see \cite{Pau97}.

By \cite[Theorem 1.2]{DHP23}, we can consider the relative $K_X-$MMP with respect to $\alpha$,
$$\begin{tikzcd}
X \arrow[r, "\alpha"] \arrow[d, "\beta", dotted] & A(X) \\
Y \arrow[ru, "\gamma"]                           &     
\end{tikzcd}$$
where the last step of $\beta$ is a Mori fiber space.
Notice that the restriction of the relative $K_X-$MMP with respect to $\alpha$ to any fiber $F$ is the $K_F-$MMP since $K_X|_F=(K_X+F)|_F=K_F$ by the adjunction formula.
But since any fiber $F$ of $\alpha$
is a compact K\"ahler threefold with nef anticanonical divisor such that $\tilde{q}(F)=0$ whose MMP consists of at most one step of a Mori fiber space,
$\beta$ is a morphism and $\beta$ consists of at most one step of a Mori fiber space.
In fact, since $\tilde{q}(F)=0$, $\pi_1(F)$ is finite by the structural theorem for compact K\"ahler threefolds with nef anticanonical divisor proven in \cite{MW23}.
Since $\pi_2(A(X))=0$, the homotopy exact sequence implies that $\pi_1(F)$ is a subgroup of $\pi_1(X)$ which is free abelian.
Thus $F$ is simply connected.
Therefore, $F$ is either a Calabi-Yau threefold, a product of $\P^1$ with a K3 surface or a rational connected threefold.
In the third case, $H^{2,0}(F)=H^{1,0}(F)=0$ and thus $H^{2,0}(X)=0$,
contradicting the assumption that $X$ is non-projective.
Thus the third possibility can be ruled out.
Note that the dimension of $Y$ is 4 or 3.

If $\dim(Y)=4$, any fiber $F$ of $\alpha$
is a simply connected Calabi-Yau threefold and $\beta$ is an isomorphism.
By Proposition \ref{CY} we have $c_1(X)=0$, 
contradicting the assumption that $K_X$ is not $\Q-$effective.

If $\dim(Y)=3$, the map $\gamma$ is a K3-fibration between compact K\"ahler manifolds.
Because $\dim(Y)=3$, $F$ is a product of $\P^1$ with a K3 surface and the restriction of $\beta$ to $F$ is the projection onto the K3 surface.
In particular, the set-theoretic fiber of $\gamma$ is alway a K3 surface.
Since $\beta$ is a $\Q-$conic bundle (see \cite{MW23}) and $X$ is smooth, $\beta$ is a conic bundle outside a codimension at least 2 closed analytic subset of $Y$.
The proof of \cite[Lemma 4.1]{HP15b} shows that $Y$ is klt and so in particular Cohen-Macaulay.
Since the fibers of $\gamma$ are K3 sufaces, $\gamma$ is flat by miracle flatness and hence smooth. It follows that $Y$ is smooth.

Again by Proposition \ref{CY}, up to some finite \'etale cover,
$Y$ is a product of $A(X)$ with a K3 surface.
Alternatively, we may argue as follows:
Since $-K_X$ is nef and $\beta$ is a conic bundle (i.e. $-K_X$ is $\beta-$ample) by (the proof of) Theorem 8.13 in \cite{DHP23}, $-K_Y$ is psef by \cite[Proposition 2.5]{MW23}.
On the other hand, $Y$ is not uniruled which implies that $K_Y$ is psef by \cite{Bru06}.
More precisely, the rational curves of $Y$ are contained in the fibers of $\gamma$ since $A(X)$ contains no rational curves.
Thus, if $Y$ were uniruled  the fibers of $\gamma$ would be uniruled, too, contradicting the fact that the fibers of $\gamma$ are K3 surfaces.
Thus $c_1(Y)=\frac{1}{4} \beta_*(c_1(-K_X)^2)=0$
by conic bundle formula \cite[Proposition 3.5]{MW23}.

By the Fischer-Grauert theorem \cite{FG65}, $\beta$ is  locally trivial since $\P^1$ is rigid.
In particular, there exists a line bundle $L$ on $X$ such that $-K_X=L^{\otimes 2}$ (see e.g.  \cite[Lemma 6]{ARM14}).
Since $L$ is $\beta-$very ample, $X=\P(\beta_* L)$.
By \cite[Claim 2.8]{MW23}, $\beta_* L$ is weakly positively curved.
On the other hand,
$$c_1(\beta_*L)=\frac{1}{4} \beta_*(c_1(-K_X)^2)=0. $$
By \cite[Main theorem]{numer_flat}, $\beta_* L$ is numerically flat.
Since a K3 surface is simply connected, $\beta_* L$ is the pull back of some numerically flat vector bundle over the elliptic curve by the next Lemma.
\end{proof}
\begin{lem}\label{Lemma 1}
Let $X,Y$ be two compact K\"ahler manifolds.
Assume that $\pi_1(Y)=0$.
Let $E$ be a numerically flat vector bundle over $X \times Y$.
Then there exists a numerically flat vector bundle $F$ on $X$ such that $E=p_1^* F$ where $p_1: X \times Y \to X$ is the natural projection.
\end{lem}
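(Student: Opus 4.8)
The plan is to first establish that any numerically flat vector bundle on a simply connected compact K\"ahler manifold is trivial, and then to realize $F$ as the direct image $(p_1)_*E$.

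For the first point, since $\pi_1(Y)=0$ we have $b_1(Y)=0$, so the irregularity $q(Y)=h^{1,0}(Y)=h^{0,1}(Y)=\dim H^1(Y,\OX_Y)$ vanishes, i.e. $H^1(Y,\OX_Y)=0$. Now let $V$ be numerically flat on $Y$. By the structure theorem for numerically flat bundles on compact K\"ahler manifolds \cite{numer_flat} (the K\"ahler analogue of \cite[Theorem 1.18]{DPS94}), $V$ admits a filtration $0=V_0\subsetneq V_1\subsetneq\cdots\subsetneq V_k=V$ by subbundles with each $V_i/V_{i-1}$ hermitian flat, hence determined by a unitary representation of $\pi_1(Y)=0$, i.e. trivial. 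Since $\operatorname{Ext}^1_{\OX_Y}(\OX_Y^{\oplus a},\OX_Y^{\oplus b})\cong H^1(Y,\OX_Y)^{\oplus ab}=0$, a short induction on $k$ shows that $V\cong\OX_Y^{\oplus\rank V}$.

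Next I would apply this fiberwise. Put $r=\rank E$. For every $x\in X$, the restriction $E|_{\{x\}\times Y}$ is numerically flat on $Y$ (nefness is preserved under restriction to a submanifold, both for $E$ and for $E^*$), hence trivial of rank $r$ by the previous paragraph; in particular $h^0(\{x\}\times Y,E|_{\{x\}\times Y})=r$ is independent of $x$. Since $p_1$ is proper and flat and $E$ is locally free, Grauert's direct image and base change theorem shows that $F:=(p_1)_*E$ is locally free of rank $r$ on $X$ and that its formation commutes with base change. The canonical morphism $p_1^*F\to E$ therefore restricts on each fiber $\{x\}\times Y$ to the evaluation map $H^0(\{x\}\times Y,E|_{\{x\}\times Y})\otimes_{\CC}\OX_{\{x\}\times Y}\to E|_{\{x\}\times Y}$, which is an isomorphism because that restriction is trivial. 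A morphism of vector bundles that is an isomorphism on every fiber of $p_1$, hence at every point of $X\times Y$, is an isomorphism, so $p_1^*F\cong E$. Finally, fixing $y_0\in Y$ and the section $s\colon X\to X\times Y$, $s(x)=(x,y_0)$, of $p_1$, we get $F\cong s^*p_1^*F\cong s^*E$, which is numerically flat since numerical flatness is stable under pullback.

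The only non-formal ingredient is the triviality statement for numerically flat bundles on simply connected $Y$; this is where the K\"ahler structure theorem \cite{numer_flat} and the Hodge-theoretic vanishing $H^1(Y,\OX_Y)=0$ are used, and it is the step I expect to be the crux. Everything else is standard cohomology and base change. As an alternative to the direct image argument, one could instead pull back the filtration of $E$ on $X\times Y$ to $X$ one step at a time, using the K\"unneth formula together with $H^1(Y,\OX_Y)=0$ to check that each successive extension class descends from $X$; but the $(p_1)_*$ route seems shortest.
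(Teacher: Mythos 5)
Your proof is correct, but it takes a genuinely different route from the paper's. The paper filters $E$ on $X\times Y$ into hermitian flat pieces via \cite[Theorem 1.18]{DPS94}, notes that these pieces descend automatically because $\pi_1(X\times Y)\simeq\pi_1(X)$, and then descends the successive extension classes using the K\"unneth decomposition of $H^1(X\times Y,p_1^*\hom(E_2,E_1))$ together with $H^1(Y,\cO_Y)=0$, concluding by induction on the rank --- this is precisely the ``alternative'' you sketch in your last sentence. Your main argument instead first proves the auxiliary statement that a numerically flat bundle on a simply connected compact K\"ahler manifold is trivial (same two inputs: the DPS filtration and $H^1(Y,\cO_Y)=0$), then applies it fiberwise, uses Grauert's base-change theorem to see that $F:=(p_1)_*E$ is locally free with $p_1^*F\to E$ an isomorphism fiber by fiber, and recovers numerical flatness of $F$ via the section $s(x)=(x,y_0)$, since $F\simeq s^*E$. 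All the steps check out: restriction of a nef bundle (and of its dual) to a submanifold is nef, so the fiberwise restrictions are numerically flat; constancy of $h^0$ makes Grauert applicable in the analytic category; and a bundle map of equal-rank bundles that is an isomorphism at every point is an isomorphism. What your route buys is a cleaner, filtration-free descent (the bundle $F$ is produced directly as a direct image) and a reusable intermediate fact about numerically flat bundles on simply connected compact K\"ahler manifolds; the paper's route stays entirely within the extension-class formalism and avoids base-change machinery. One cosmetic remark: \cite[Theorem 1.18]{DPS94} is already stated for compact K\"ahler manifolds, so you do not need to invoke \cite{numer_flat} for the filtration.
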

\begin{proof}
By \cite[Theorem 1.18]{DPS94}, $E$ is a successive extension of hermitian flat vector bundles on $X \times Y$.
Since any hermitian flat vector bundle corresponds to a representation of $\pi_1(X \times Y) \simeq \pi_1(X)$,
these hermitian flat vector bundles are pullbacks of some hermitian flat vector bundles on $X$.

Now consider an extension of pullbacks of hermitian flat vector bundles $E_1, E_2$ on $X$
which corresponds to an element of
$H^1(X \times Y, p_1^* \hom(E_2, E_1))$, and thus an element of $H^1(X, \hom(E_2, E_1)) \otimes H^0(Y, \cO_Y) \oplus H^0(X,  \hom(E_2, E_1)) \otimes H^1(Y, \cO_Y)$  by the K\"unneth formula which gives $H^1(X, \hom(E_2, E_1))$.
In particular, the extension is the pullback of some extension of the hermitian flat vector bundles on $X$.
Induction on the rank of $E$ finishes the proof. 
\end{proof}

\section{First Case: Fibers are Calabi-Yau}

\begin{prop}\label{CY}
Let $X$ be a compact K\"ahler manifold and $f: X \to T$ be a smooth morphism onto a complex torus.
Assume that each fiber of $f$ is a simply connected Calabi-Yau manifold.
Then $c_1(X)=0$.
\end{prop}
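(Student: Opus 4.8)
The plan is to show that $K_X$ is the pull-back under $f$ of a line bundle $L$ on $T$, to identify $L$ with the first step of the Hodge filtration of the variation of Hodge structure carried by $R^m f_*\ZZ$ (here $m:=\dim X_t$ is the dimension of a fibre), and then to exploit that this variation must be flat because $\pi_1(T)$ is abelian.

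First I would reduce to a statement on $T$. Since $T$ is a torus, $K_T\cong\OX_T$, so $K_X\cong K_{X/T}$; as $f$ is a smooth proper fibration, $f_*\OX_X=\OX_T$, and by adjunction $K_{X/T}|_{X_t}\cong K_{X_t}\cong\OX_{X_t}$ because $X_t$ is Calabi--Yau. Then $h^0(X_t,K_{X/T}|_{X_t})\equiv1$, so by Grauert's theorem $L:=f_*K_{X/T}$ is a line bundle on $T$, and the evaluation map $f^*L\to K_{X/T}$, being a nonzero map of line bundles on each fibre, is an isomorphism. Thus $K_X\cong f^*L$ and $c_1(X)=-f^*c_1(L)$; since $f$ is surjective, $f^*$ is injective on $H^2(-,\RR)$, so it suffices to prove $c_1(L)=0$.

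Next I would record two facts about $L$. On the one hand, $L=f_*\Omega^m_{X/T}=\mathcal F^m$ is the top step of the Hodge filtration of the polarizable variation of Hodge structure $\mathcal H=(R^mf_*\ZZ)_{\mathrm{tf}}\otimes\OX_T$ (this uses that $X$ is K\"ahler and $f$ smooth). Equipped with the $L^2$ (Hodge) metric, Griffiths' curvature formula for the top Hodge bundle — equivalently, the Berndtsson-type positivity of direct images of relative canonical bundles — gives $\sqrt{-1}\,\Theta(L,h_{L^2})=\omega_{\mathrm{WP}}\geq0$, the Weil--Petersson form, so $c_1(L)$ lies in the closure of the K\"ahler cone of $T$; this is the step that uses the curvature of the relative canonical bundle in place of \cite{BDPP}. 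On the other hand, the monodromy of $\mathcal H$ is a representation of the abelian group $\pi_1(T)$, so by Deligne's semisimplicity theorem $\mathcal H$ splits as a direct sum of sub-variations with irreducible, hence rank-one, underlying local systems; a rank-one polarizable variation of Hodge structure is unitary (the polarization restricts to a flat Hermitian metric) and pure of one Hodge type, so has flat Hodge filtration, i.e. vanishing Higgs field, i.e. — via the Tian--Todorov identification $R^1f_*T_{X/T}\cong R^1f_*\Omega^{m-1}_{X/T}\otimes L^{-1}$ — vanishing Kodaira--Spencer map of $f$. Hence $\omega_{\mathrm{WP}}\equiv0$; equivalently, $L$ is the unique rank-one summand of $\mathcal H$ of Hodge type $(m,0)$, a unitary flat line bundle on $T$. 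Either way $c_1(L)=0$, and therefore $c_1(X)=-f^*c_1(L)=0$.

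The main obstacle is this last step: proving the rigidity of a polarizable variation of Hodge structure over a complex torus. The ingredients I would use — Deligne's decomposition theorem over a compact K\"ahler base, together with the elementary fact that a rank-one polarizable variation is unitary with flat Hodge filtration — are standard, but this is precisely where the deformation-theoretic and curvature input takes over from the positivity-of-direct-images technology that is available only in the projective setting. By comparison, the reduction $K_X=f^*(f_*K_{X/T})$ (cohomology and base change) and the semipositivity of the Hodge metric (the standard curvature computation for Hodge bundles) are routine.
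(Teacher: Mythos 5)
Your argument is correct, but it takes a genuinely different route from the paper. The paper first proves isotriviality of $f$: either via the metric theorem of Tosatti--Zhang (\cite[Remark 3.2]{TZ14}: a submersion with fibers of trivial canonical bundle over a base carrying a K\"ahler metric of non-negative Ricci curvature is a holomorphic fiber bundle), or, in the alternative proof, via hyperbolicity of the moduli space of K\"ahler polarized Calabi--Yau manifolds against the vanishing of the Kobayashi pseudodistance on the torus; in both cases the Weil--Petersson form vanishes and the curvature formula of \cite{BCS20} for the fiberwise Ricci-flat metrics then gives $c_1(K_X)=0$. You never prove isotriviality: you write $K_X=f^*L$ with $L=f_*K_{X/T}$ the top Hodge bundle and kill $c_1(L)$ by pure Hodge theory, namely Deligne semisimplicity combined with the fact that irreducible representations of the abelian group $\pi_1(T)$ have rank one, so every irreducible constituent is a unitary rank-one complex variation with vanishing Higgs field; hence $L$ is unitary flat (and, via infinitesimal Torelli for Calabi--Yau fibers, the Kodaira--Spencer maps vanish, so you recover $\omega_{\mathrm{WP}}\equiv 0$ as a byproduct). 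Your route yields the slightly finer statement that $K_X$ is the pullback of a unitary flat line bundle, avoids both the Tosatti--Zhang theorem and moduli-space hyperbolicity, and is in fact closer in spirit to the period-map argument the paper uses for the more general Proposition \ref{Prop3}. Two points deserve care, though neither is a gap: since $T$ need not carry a rational K\"ahler class, the polarization has to be the real one induced by restricting a K\"ahler class of $X$ (a flat section of $R^2f_*\R$, possibly after primitive decomposition), so you must invoke semisimplicity and Deligne's decomposition for real or complex polarizable variations over a compact K\"ahler base (available through the theorem of the fixed part, or Corlette--Simpson harmonic bundle theory) rather than the usual quasi-projective statement for $\Z$-variations; and the passage from semisimplicity of the local system to a splitting into rank-one sub-variations uses Deligne's refinement that the isotypic decomposition is a decomposition of variations of Hodge structure, each irreducible factor underlying a complex variation. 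Both facts are standard, so your proof stands as a valid alternative.
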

\begin{proof}
We start by recalling the following result of \cite[Remark 3.2]{TZ14}:

\textit{
If $g : X \to Y$ is a holomorphic submersion between compact K\"ahler manifolds with fibers $X_y$ having trivial canonical bundle and base $Y$ which admits
a K\"ahler metric with non-negative Ricci curvature, then $g$ is a holomorphic
fiber bundle.
}

This result says that $f$ is locally trivial.
In other words, we have no deformation of complex structure among the fibers of $f$ which in turn implies that the Weil-Petersson metric on $T$ vanishes identically.
To conclude, we use the fact that for a smooth fibration of Calabi-Yau manifolds, there is a close relation between the variation of complex structure and the positivity of the relative canonical bundle. To this end, we consider the fiberwise unique Ricci-flat K\"ahler-Einstein metrics $\omega_s$ in the corresponding K\"ahler classes, that are given by the polarization. The relative volume form $\omega_{X/T}^{n}=gdV$, where $\dim X_s=n$, induces a hermitian metric $g^{-1}$ on the relative canonical bundle $K_{X/T}=K_X$. Its curvature is given, up to a numerical constant, by the pullback of the Weil-Petersson form downstairs, i.e. the $L^2$ inner product of harmonic representatives of the Kodaira-Spencer classes, see e.g. \cite[Eqn. 1.1]{BCS20}:
$$
2\pi c_1(K_X, g^{-1}) = \sqrt{-1}\partial \overline{\partial} \log g = \frac{1}{\operatorname{vol} X_s} f^*\omega^{WP}.
$$ 
But by the isotriviality, the Weil-Petersson form $\omega^{WP}$ vanishes identically, hence \\ $c_1(K_X, g^{-1})=0$.
\end{proof}
Another proof can be given by moduli of K\"ahler polarized Calabi-Yau manifolds:
\begin{proof}
First we want to show that the fibration is isotrivial. In the projective case, this follows from \cite[Theorem A]{Den19}. The same is true in our situation:  By the work of Fujiki and Schumacher, see \cite[Thm. 5.4]{FS90} and also \cite{Sch83, Fu84}, there exists a coarse moduli space of K\"ahler polarized K\"ahler manifolds with $c_1=0$, denoted by $\mathfrak{M}$, which has the structure of a complex orbifold. In analogy to the canonically polarized case, by the work of \cite{Bra15} or \cite{TY18}, this moduli space carries an orbifold Finsler metric  whose sectional curvature can be bounded by a negative constant. Hence, the moduli of K\"ahler polarized Calabi-Yau manifolds is Kobayashi hyperbolic in the orbifold sense, cf. \cite[Thm. C]{Den19}.  

Now we can prove the isotriviality as in \cite[pf. of Thm. C]{Den19}: Suppose that $f$ is not isotrivial. The K\"ahler form $\omega_X$ induces a polarization $\lambda_{X/T} \in R^1f_*\Omega_{X/T}(T)$, cf. \cite{FS90}. Hence we obtain a moduli map $\varphi: T \to \mathfrak{M}$ which is not constant. By construction, 
$\mathfrak{M}$ is locally the quotient of Kuranishi spaces, so the map $\varphi$ is locally liftable and thus an orbifold morphism. For a pair of points $p,q \in T$ such that $\varphi(p) \neq \varphi(q)$, we obtain by the distance decreasing property of the Kobayashi pseudo-metric, see \cite[Lemma 3.4]{Den19}, that $d_Y(p,q) \geq d_{\mathfrak{M}}(\varphi(p), \varphi(q))>0$, which contradicts the fact that the complex torus $T$ is hyperbolically special, i.e. the Kobayashi pseudo distance vanishes identically on $T$. The rest of proof follows from that of the previous one.
\end{proof}
Following the arguments of \cite[Theorem 3.1]{TZ14}, we have the following result:
\begin{prop} \label{Prop3}
Let $X$ be a compact K\"ahler non-uniruled manifold of dimension $n$ such that 
$\tilde{q}(X)=q(X)$, the general fiber is a Calabi-Yau manifold and 
$-K_X$ is nef.
Then we have $c_1(X)=0$.
\end{prop}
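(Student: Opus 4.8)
The plan is to prove that $K_X$ is pseudo-effective, which together with the nefness of $-K_X$ forces $c_1(X)=0$: a closed positive current $T$ representing $c_1(K_X)$ satisfies $0\le (-K_X)\cdot\omega^{n-1}=-\int_X T\wedge\omega^{n-1}\le 0$ for a K\"ahler class $\omega$, hence $T=0$ and $c_1(X)=-c_1(K_X)=0$. Since $c_1(X)=0$ is invariant under finite \'etale covers, I would first use $\tilde{q}(X)=q(X)$ and \cite{Pau97} to pass to a cover for which $\pi_1(X)$ is free abelian; the Albanese morphism $\alpha\colon X\to A:=A(X)$ is then surjective with connected fibres (by \cite{Cao13} and $\tilde{q}(X)=q(X)$), and a general fibre $F$ is again a Calabi-Yau manifold. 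As $\pi_2(A)=0$, the homotopy exact sequence shows $\pi_1(F)$ injects into the torsion-free group $\pi_1(X)$, so $F$ is simply connected and $K_F\simeq\cO_F$; since $K_A$ is trivial, this yields $K_X\equiv K_{X/A}$.

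The core step is then to establish pseudo-effectivity of $K_{X/A}$ \emph{directly from the Calabi-Yau condition on the fibres}, which is what replaces the (unavailable) K\"ahler \cite{BDPP}: rather than deducing pseudo-effectivity of $K_X$ from non-uniruledness, one uses that each general fibre carries a Ricci-flat K\"ahler metric, hence a flat metric on its canonical bundle. Concretely, I would fix a K\"ahler class on $X$, equip the general fibres with their unique Ricci-flat representatives, and invoke the computation recalled in the proof of Proposition \ref{CY}: the associated relative volume form endows $K_{X/A}$ over the smooth locus $U\subset A$ of $\alpha$ with the fibrewise $L^{2}$-metric, whose curvature is, up to a positive constant, the pullback of the (semipositive) Weil-Petersson form on $U$. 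By the positivity theory for relative canonical bundles --- which for Calabi-Yau fibres rests just on the flat metric on $K_F$ coming from the Ricci-flat metric --- this $L^{2}$-metric extends over all of $X$ to a singular Hermitian metric on $K_{X/A}=K_X$ with semipositive curvature current; equivalently, the reflexive rank-one sheaf $\alpha_*K_{X/A}$ carries a singular metric of semipositive curvature and a standard argument with the evaluation morphism $\alpha^*\alpha_*K_{X/A}\to K_{X/A}$ writes $K_X$ as the pullback of a pseudo-effective class on $A$ plus an effective divisor. Either way $K_X$ is pseudo-effective, and the first paragraph concludes.

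The step I expect to be the main obstacle is precisely the extension across the discriminant of $\alpha$: in the K\"ahler setting $\alpha$ is not known to be smooth, so one cannot simply apply Proposition \ref{CY}, but must show that the fibrewise $L^{2}$-metric (equivalently, the Narasimhan--Simha metric on $\alpha_*K_{X/A}$) really extends over the singular fibres as a metric with semipositive curvature current, and that the class identities giving pseudo-effectivity are unaffected by $\alpha^{-1}(A\setminus U)$. An alternative, more geometric route would be to first establish isotriviality of the smooth family over $U$ --- using, as in the second proof of Proposition \ref{CY}, the orbifold Kobayashi hyperbolicity of the moduli space of polarised Calabi-Yau manifolds, together with the fact that a complex torus with a closed subvariety of codimension $\ge 2$ removed is still Kobayashi-degenerate --- whence the Weil-Petersson form vanishes, $K_X$ is flat over $\alpha^{-1}(U)$, and one concludes provided $\alpha^{-1}(A\setminus U)$ contains no divisor. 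In both approaches the real work lies in controlling the discriminant, e.g. in showing that $\alpha$ is smooth, or at least equidimensional, in codimension one.
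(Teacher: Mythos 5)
Your overall skeleton (reduce to the relative situation, get the Weil--Petersson/curvature of $K_{X/A}$ under control over the smooth locus, then handle the bad locus) points in the right direction, and your second route is close in spirit to the paper's argument: the paper proves isotriviality of the family over $A(X)\setminus Z$ (via the period map for the real polarized VHS, extended across the codimension~$\geq 2$ set in $\C^{2q}$ by Hartogs resp.\ \cite[Corollary 9.5]{GS69}, then made constant by Liouville resp.\ the Schwarz lemma and Griffiths' horizontal curvature bound), deduces that the Kodaira--Spencer classes vanish, hence by \cite{BCS20} that $c_1(K_X)$ vanishes on $X\setminus\alpha^{-1}(Z)$. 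However, both of your routes leave the decisive steps unproved, and you only flag them as ``the main obstacle''. In route~1, the extension of the fibrewise $L^2$ (Ricci-flat volume) metric on $K_{X/A}$ across the singular fibres as a metric with semipositive curvature current is exactly the Kähler positivity-of-direct-images statement that is \emph{not} available here: the Albanese map need not be projective, and the introduction of the paper stresses that this is precisely why the projective techniques fail. Asserting this extension is therefore not a citable step but the whole difficulty; the paper's proof is organized so as never to need it (it only needs a cohomology-class identity on the open part, not a global current).

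In route~2 you correctly isolate the residual condition ``$\alpha^{-1}(A\setminus U)$ contains no divisor'', but you give no argument for it, and your diagnosis that the work lies in showing $\alpha$ is smooth or equidimensional in codimension one on the base misplaces the difficulty: smoothness in codimension one of the base is already supplied by \cite{Cao13}; what must be excluded is a divisorial component of $X$ lying over the codimension~$\geq 2$ bad locus $Z\subset A(X)$. The paper proves this as Lemma~\ref{FlatLocus}, using Hironaka's flattening, P\u{a}un's theorem \cite{Pau12} applied to $K_{\tilde X/\tilde Y}+\pi^*(-K_X)+\epsilon\omega_{\tilde X}$ (this is where the nefness of $-K_X$ and $c_1(A(X))=0$ enter), and an intersection-number argument forcing $\beta^*F$ to be $\pi$-exceptional; it then transfers $c_1=0$ from $X\setminus\alpha^{-1}(Z)$ to $X$ via Lemma~\ref{Cohomology}. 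Without an argument of this type your proof is incomplete. A secondary point: your alternative isotriviality argument needs a justification that the Kobayashi pseudodistance of $A(X)\setminus Z$ (or of $\C^{2q}$ minus a codimension~$\geq 2$ analytic set) still degenerates, and that the moduli map is an orbifold morphism on this open set; the paper sidesteps this by extending the period map to all of $\C^{2q}$ before applying Liouville/Schwarz.
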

\begin{proof}
Let $\alpha: X \to A(X)$ be the Albanese morphism which is smooth in codimension 1 by \cite{Cao13}.
If $\dim (A(X))=n$, by \cite{Cao13}, $X$ is a modification of $A(X)$.
In particular, we have equality of Kodaira dimensions $\kappa(X)=\kappa(A(X))=0$ and thus $K_X$ is psef.
Since $-K_X$ is nef, $c_1(X)=0$.
Beauville-Bogomolov decomposition implies that in fact $X \simeq A(X)$.
Therefore, we only consider the case where $\dim(A(X))<n$.
Without loss of generality, we may assume also that $q(X) \geq 1$.

Let $Z$ be a codimension at least two closed analytic subset of $A(X)$ such that $\alpha: \alpha^{-1}(A(X) \setminus Z) \to A(X) \setminus Z$ is smooth.

Consider the universal cover $\pi: \C^{2\dim(A(X))} \to A(X)$.
By the codimension condition of $Z$,
$\pi: \pi^{-1}(A(X) \setminus Z) \to A(X) \setminus Z$ is the universal cover of $A(X) \setminus Z$.

Endow the local system $R^k\alpha_*\R$ associated to $\alpha: \alpha^{-1}(A(X) \setminus Z) \to A(X)\setminus Z$ with the geometric real variation of polarized Hodge structures, where $k=n-\dim(A(X))$, cf. \cite{TZ14}.
The construction of period domains and period mappings for real polarized Hodge structures by Griffiths gives a
well-defined holomorphic period map $p: \pi^{-1}(A(X) \setminus Z) \to \mathcal{D}$ where $\mathcal{D}$ is the associated period domain. 
For example, the period domain is the upper half plane $\mathbb{H}$ if the fiber $F$ is an elliptic curve.
In the case $\dim(A(X))=n-2$, as observed on page 305 of \cite{Kob90}, the period domain of real polarized K3 surfaces is the non-compact Hermitian symmetric domain $O(2,19)/SO(2) \times O(1,19)$.
By Harish-Chandra's embedding theorem, the period domain can be realized as an open subset of a complex vector space.
By Hartogs theorem, the period map $p$ extends to a holomorphic function on $\C^{2\dim(A(X))}$.
In the general case, the extension follows from \cite[Corollary 9.5]{GS69} based on the geodesic completeness of the period domain and the distance decreasing property of morphisms between manifolds whose curvature have strict negative upper bounds (which implies the continuous hence holomorphic extension).

The period map is thus constant by Liouville's theorem in the case $\dim(A(X))=n-1$.
In the general case,
it follows from Schwarz lemma \cite[Lemma 3.3]{TZ14} and the existence of a hermitian metric on $\mathcal{D}$ whose holomorphic sectional curvature is bounded form above by a strictly negative constant along the horizontal subbundle of $T \mathcal{D}$ defined by Griffiths \cite{Gri70}.

But now the differential of the period map $p$ at $z$ is a composition of the Kodaira-Spencer map $\rho_z: T_z (\pi^{-1}(A(X) \setminus Z)) \to H^1(\alpha^{-1}(\pi(z)), T_{\alpha^{-1}(\pi(z))})$, and the cup product map $H^1(\alpha^{-1}(\pi(z)), T_{\alpha^{-1}(\pi(z))}) \to T_{p(z)}\mathcal{D}$. Since the second map is injective by infinitesimal Torelli,
the Kodaira-Spencer map of the pullback family over $\pi^{-1}(A(X) \setminus Z)$ is identically zero.

By \cite[Theorem 4.6]{Kod05}, since $h^1(F, T_F)=h^{1,n-1}(F)$ which is constant for the K\"ahler fibration, $\alpha$ is locally trivial over $A(X) \setminus Z$.
No deformation of complex structure among the fibers of $\alpha$ implies that the Weil-Petersson metric on $A(X) \setminus Z$ is identically zero.
Again by the result of \cite{BCS20}, the pullback of the Weil-Petersson metric represents the first Chern class of relative  canonical divisor $K_{(X \setminus \alpha^{-1}(Z))/(A(X) \setminus Z)}$ modulo a constant factor, which implies that the first Chern class of $X \setminus \alpha^{-1}(Z)$ is trivial.

The preimage of the non-flat locus $W$ of $\alpha$ is codimension at least 2 by Lemma \ref{FlatLocus}. Since the dimension of the fibers is the expected one for a point on the base where the map is flat, the preimage of the non-smooth locus of $\alpha$ is also codimension at least 2.
Thus, by Lemma \ref{Cohomology}, $H^2(X, \C) \simeq H^2(X \setminus \alpha^{-1}(Z), \C )$ and hence $c_1(X)=0$.
\end{proof}

\begin{lem}\label{Cohomology}
Let $X$ be complex manifold of dimension $n$ containing a closed analytic subset $W$ of codimension at least $k$.
Then we have for $i \leq 2(k-1)$,
$$ H^{i}(X, \C) \simeq H^{i}(X \setminus W,\C).$$
\end{lem}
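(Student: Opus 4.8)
The plan is to reduce the statement to a local computation of local cohomology and then feed it into a long exact sequence. First I would recall the long exact sequence of the pair $(X, X\setminus W)$ in singular cohomology (with $\mathbb C$-coefficients), which by excision and the definition of cohomology supported on $W$ reads
\[
\cdots \to H^i_W(X,\mathbb C) \to H^i(X,\mathbb C) \to H^i(X\setminus W,\mathbb C) \to H^{i+1}_W(X,\mathbb C) \to \cdots.
\]
So the claim follows once I show that $H^i_W(X,\mathbb C) = 0$ for all $i \le 2(k-1)+1 = 2k-1$; then the restriction map is an isomorphism in degrees $i \le 2(k-1)$ (we need vanishing of $H^i_W$ in degrees $i$ and $i+1$, i.e. up to $2k-1$, which is exactly what the bound gives). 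The second step is to compute, or rather bound, the local cohomology $H^i_W(X,\mathbb C)$. Since $W$ is a closed analytic subset of complex codimension $\ge k$, it has real codimension $\ge 2k$ in the real-analytic sense; more precisely, $W$ admits a filtration by closed analytic subsets whose successive differences are complex manifolds (the smooth strata) of real dimension $\le 2(n-k)$.

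The key input is the following vanishing: if $S$ is a locally closed $C^1$ (in particular complex-analytic) submanifold of real codimension $c$ in a manifold $M$, then $H^i_S(M,\mathbb C) = 0$ for $i < c$, because local cohomology along a submanifold is computed by the Thom isomorphism $H^i_S(M,\mathbb C) \cong H^{i-c}(S, \mathrm{or})$ (cohomology of $S$ with coefficients in the orientation local system of the normal bundle), which vanishes in negative degrees. I would then run an induction on the strata of $W$: writing $W = W_{\dim} \supset \cdots$ with $W \setminus W_{\sing}$ a complex manifold of real codimension $\ge 2k$ and $W_{\sing}$ of strictly smaller dimension, the excision/Mayer--Vietoris-type long exact sequence for local cohomology
\[
\cdots \to H^i_{W_{\sing}}(X,\mathbb C) \to H^i_W(X,\mathbb C) \to H^i_{W\setminus W_{\sing}}(X\setminus W_{\sing},\mathbb C) \to \cdots
\]
together with the base case (the smooth stratum has real codimension $\ge 2k$, so its local cohomology vanishes below degree $2k$) and the inductive hypothesis (the singular part has complex codimension $\ge k+1$, hence even better vanishing) gives $H^i_W(X,\mathbb C) = 0$ for $i \le 2k-1$. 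The induction terminates since $\dim_{\mathbb C} W_{\sing}$ strictly decreases and eventually becomes empty.

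Alternatively, and perhaps more cleanly, I would cite the standard fact that for a closed analytic subset $W$ of complex codimension $\ge k$ in a complex manifold, $H^i_W(X,\mathcal F) = 0$ for $i < 2k$ and any locally constant sheaf $\mathcal F$ — this is, for instance, in the literature on local cohomology and the purity theorem (see e.g. the treatment via stratifications, or \cite{GS69}-type arguments, or simply the fact that the constant sheaf on a complex-analytic space satisfies $\mathcal H^j_W(\mathbb C_X) = 0$ for $j < 2\,\mathrm{codim}\,W$). The main obstacle is really just being careful with the indexing: one must verify that vanishing of $H^i_W$ for $i \le 2k-1$ is enough to conclude isomorphism precisely in the range $i \le 2(k-1)$, and that the stratification argument does not lose a degree. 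Once the local-cohomology vanishing is in hand, the long exact sequence of the pair finishes the proof immediately.
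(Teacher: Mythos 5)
Your argument is correct, but it takes a different route from the paper. You work with ordinary cohomology and the long exact sequence of the pair, reducing everything to the purity statement $H^{i}_{W}(X,\C)=0$ for $i\le 2k-1$, which you prove by stratifying $W$ and applying the Thom isomorphism on each smooth stratum (real codimension $\ge 2k$), with induction on $\dim W$ handling the singular locus; your bookkeeping of the degree range ($i\le 2(k-1)$ needs vanishing of $H^{i}_W$ and $H^{i+1}_W$, i.e.\ up to $2k-1$) is exactly right. The paper instead applies Poincar\'e duality to both $X$ and $X\setminus W$ and uses the long exact sequence of compactly supported cohomology for the decomposition $X = (X\setminus W)\sqcup W$, where the required vanishing $H^{j}_c(W,\C)=0$ for $j>2\dim_{\R}W$ is immediate by dimension reasons --- no stratification or Thom isomorphism needed, at the cost of invoking duality for the (non-compact) manifolds $X$ and $X\setminus W$ and thus being tied to field coefficients. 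Your approach is slightly longer but more robust: it works verbatim for $\Z$ or $\Z_r$ coefficients and any local system, and in fact it is essentially the same tubular-neighborhood/excision mechanism the paper itself uses later for Lemma \ref{Lem4.2}, so the two lemmas could be proved uniformly by your method. Either way the statement is established.
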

\begin{proof}
By Poincar\'e duality, it is equivalent to show that for $i \leq 2(k-1)$,
$$ H^{2n-i}_c(X,\C) \simeq H^{2n-i}_c(X \setminus W,\C)$$
which follows from long exact sequence
$$ H^{2n-i-1}_c(W,\C)=0 \to H^{2n-i}_c(X \setminus W,\C) \to H^{2n-i}_c(X,\C) \to H^{2n-i}_c (W,\C)=0.$$
Here, the vanishing of cohomologies follows from dimension reasons.
\end{proof}

\begin{lem}\label{FlatLocus}
Let $X$ be a compact K\"ahler manifold of dimension $n$ such that $-K_X$ is nef.
Let $Y$ be a compact K\"ahler manifold such that $c_1(Y)=0$ with a holomorphic map $\alpha: X \to Y$.
Assume that the non-flat locus $W \subset Y$ is of codimension at least 2.
Then $\alpha^{-1}(W)$ is of codimension at least 2.
\end{lem}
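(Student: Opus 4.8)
The plan is to argue by contradiction, bounding the dimension of the fibers of $\alpha$ over the non-flat locus using the nefness of $-K_X$ together with the triviality of $c_1(Y)$. First I would set $d = \dim Y$ and suppose, for contradiction, that some irreducible component $V$ of $\alpha^{-1}(W)$ has codimension one in $X$, so $\dim V = n-1$. Let $W' = \overline{\alpha(V)} \subset W$; since $V$ is contained in $\alpha^{-1}(W)$ we have $W' \subset W$, hence $\codim_Y W' \geq 2$, i.e. $\dim W' \leq d-2$. The generic fiber of $\alpha|_V : V \to W'$ then has dimension at least $(n-1) - (d-2) = n-d+1$, which strictly exceeds the dimension $n-d$ of the generic fiber of $\alpha$ itself. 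So over a general point $y \in W'$ the fiber $\alpha^{-1}(y)$ has a component (namely a component of the fiber of $\alpha|_V$) of dimension $\geq n-d+1 > n-d$; this is precisely the failure of flatness that places $y$ in $W$, consistent with the hypothesis, but I want to extract a positivity contradiction from the over-sized fibers.

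The key step is to run a covering-family / deformation argument on the large fibers. Over a general point $y\in W'$, a component $F_y$ of $\alpha^{-1}(y)$ has dimension $\geq n-d+1$; as $y$ varies in $W'$ these sweep out $V$ (dimension $n-1$), so moving such a component inside $X$ gives a family of subvarieties of $X$ whose members have dimension $\geq n-d+1$ and which sweep out a divisor. The point is that $-K_X$ is nef, so $-K_X$ restricted to a general such $F_y$ is nef, and I want to contradict this by producing, through the deformation over $W'$, a curve or a normal-bundle computation forcing $K_X \cdot C > 0$ for some curve $C$ in a fiber — or, more in the spirit of the paper, I would compare with the behaviour of $\alpha$ over the flat locus: since $c_1(Y)=0$ and $\alpha$ is flat in codimension one with equidimensional fibers there, and since $-K_X$ is nef and $\alpha$-relatively it behaves well, an equidimensionality statement (e.g. via \cite{Cao13}, or via the Albanese-type argument that $X$ cannot contain rational curves contracted by $\alpha$ that degenerate) forces $\alpha$ to be equidimensional over all of $Y$ away from codimension two. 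Concretely: the non-flat locus $W$ is, by miracle flatness over the Cohen–Macaulay (in fact smooth, after resolution/klt arguments as used above for $Y$) base $Y$, exactly the locus where fiber dimension jumps; if $\alpha^{-1}(W)$ had a divisorial component then the fiber-dimension-jump locus in the base would itself have to be a divisor, because a divisor in $X$ dominating a subvariety of $Y$ of codimension $\geq 2$ would have relative dimension $\geq n-d+1$ over it, but then semicontinuity of fiber dimension propagates the jump to a divisor in $Y$ — contradicting $\codim_Y W \geq 2$.

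Let me state the clean version I would write: because $Y$ has $c_1(Y)=0$, after passing to a finite étale cover $Y$ is (Beauville–Bogomolov) a torus times simply connected factors, but what is actually needed is only that $Y$ is smooth and that $-K_X$ nef forces the relative canonical bundle $K_{X/Y}$ to have no $\alpha$-vertical ``positive'' curves degenerating in families; since a divisorial component $V\subset \alpha^{-1}(W)$ dominating $W'$ with $\codim W' \ge 2$ would force $\dim\alpha^{-1}(y)\ge n-d+1$ for general $y\in W'$, upper semicontinuity of $y\mapsto \dim\alpha^{-1}(y)$ would then already force the jump locus to be at least the closure of $W'$ together with its specializations; the real content is that by \cite{Cao13} (smoothness of $\alpha$ in codimension one) and the nefness of $-K_X$, the fiber dimension cannot jump along a divisor in $X$ without the base locus being a divisor. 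The main obstacle, and where I would spend the most care, is making the last implication rigorous without BDPP: one needs that a family of subvarieties of dimension $> n-d$ covering a divisor in $X$ cannot be $\alpha$-vertical over a high-codimension base — this is where I would invoke equidimensionality of $\alpha$ (flat in codimension one and hence, by miracle flatness on the smooth base $Y$, equidimensional there) together with the observation that a divisorial vertical component must surject (after Stein factorization) onto a divisor, not a codimension-$\geq 2$ subset, of $Y$; the contradiction with $\codim_Y W \geq 2$ closes the argument.
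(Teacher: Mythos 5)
Your argument has a genuine gap, and it sits exactly where you acknowledge it: you never actually convert the hypotheses ``$-K_X$ nef'' and ``$c_1(Y)=0$'' into a contradiction. The purely dimension-theoretic claims you lean on are false in general. Upper semicontinuity of the fiber dimension does \emph{not} ``propagate the jump to a divisor in $Y$'': fiber dimension routinely jumps only over a locus of codimension $\geq 2$ in the base, and a divisorial component of $\alpha^{-1}(W)$ can perfectly well dominate such a locus. The simplest example is the blow-up $\alpha\colon X\to Y$ of a smooth codimension-two center $W\subset Y$ (e.g.\ a point in a surface, or a curve in a torus of dimension $\geq 3$): the exceptional divisor is a divisor in $X$ surjecting onto the codimension-two set $W$, and the non-flat locus is exactly $W$. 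So your ``clean version'' --- that a divisorial vertical component must surject onto a divisor of $Y$, or that the jump locus must be divisorial --- is not a consequence of semicontinuity, miracle flatness, or Stein factorization; it is precisely the content of the lemma, and it is only true because $-K_X$ is nef and $c_1(Y)=0$ (in the blow-up example $-K_X=-E$ is not nef). Your second paragraph gestures at a covering-family/normal-bundle argument that would extract positivity from the oversized fibers, but no such argument is carried out, and citing \cite{Cao13} (smoothness in codimension one on the base) does not control what happens over the codimension-two locus $W$, which is the whole issue.

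For comparison, the paper's proof supplies exactly this missing positivity input, following \cite[Lemma 3.2]{CH19}: apply Hironaka's flattening theorem to get modifications $\pi\colon\tilde X\to X$, $\tau\colon\tilde Y\to Y$ and a map $\beta\colon\tilde X\to\tilde Y$ flattening $\alpha$; write $K_{\tilde X}=\pi^*K_X+E$ and $K_{\tilde Y}=F$ with $E,F$ effective (the latter using $c_1(Y)=0$). Since $\pi^*(-K_X)+\epsilon\omega_{\tilde X}$ is K\"ahler and restricts to a K\"ahler class on the general fiber of $\beta$ plus the relative canonical class, P\u{a}un's theorem on relative adjoint transcendental classes \cite[Theorem 1.1]{Pau12} gives that $K_{\tilde X/\tilde Y}+\pi^*(-K_X)+\epsilon\omega_{\tilde X}$ is psef; letting $\epsilon\to 0$ yields that $E-\beta^*F$ is psef, and pairing with $\pi^*\omega_X^{n-1}$ (against which the $\pi$-exceptional divisor $E$ is null) forces $\beta^*F$ to be $\pi$-exceptional, i.e.\ $\alpha^{-1}(W)$ has codimension at least $2$. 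Some such analytic positivity step (or a substitute for it) is indispensable; without it your proposed proof does not go through.
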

\begin{proof}
We follow the arguments of  \cite[Lemma 3.2]{CH19}.
Let $\omega_X$ be a K\"ahler form on $X$.
By Hironaka's flattening theorem \cite{Hir75}, we have a diagram
$$
\begin{tikzcd}
\tilde{X} \arrow[rd, "\beta"] \arrow[r] & X_1 \arrow[r] \arrow[d, "\tilde{\alpha}"] & X \arrow[d, "\alpha"] \\
                                        & \tilde{Y} \arrow[r, "\tau"]            & Y                 
\end{tikzcd}
$$
such that $\pi$ which is the composition $\tilde{X} \to X_1 \to X$, $X_1 \to X$, $\tau$ are modifications, $\tilde{X}, \tilde{Y}$ are smooth and $\tilde{\alpha}$ is flat.
Denote
$$K_{\tilde{X}}=\pi^* K_X+E, K_{\tilde{Y}}=F$$
where $E,F$ effective divisors.
Let $\omega_{\tilde{X}}$ be a K\"ahler form on $\tilde{X}$.
For any $\epsilon >0$,
$\pi^*(-K_X)+\epsilon \omega_{\tilde{X}}$ is K\"ahler.
Since $\beta(E) \neq \tilde{Y}$,
$K_{\tilde{X}}+\pi^*(-K_X)+\epsilon \omega_{\tilde{X}}$
is K\"ahler when restricting to a general fiber of $\beta$.
By \cite[Theorem 1.1]{Pau12},
$K_{\tilde{X}/\tilde{Y}}+\pi^*(-K_X)+\epsilon \omega_{\tilde{X}}$
is psef.
Thus after passing to a limit,
$E-\beta^* F$ is psef and we get
$$(E-\beta^* F, \pi^* (\omega)^{n-1})=-(\beta^* F, \pi^* (\omega)^{n-1}) \geq 0.$$
Therefore, $\beta^*(F)$ must be $\pi-$exceptional and,
in particular, $\alpha^{-1}(W)$ is of codimension at least 2. 
\end{proof}
As a direct application, we have the following corollary:
\begin{cor}
Let $X$ be a compact K\"ahler manifold such that 
$\tilde{q}(X)=q(X)=2$ and
$-K_X$ is nef.
Then the Albanese map of $X$ is flat.
\end{cor}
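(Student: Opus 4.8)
The plan is to obtain the statement directly from Lemma~\ref{FlatLocus} and miracle flatness. Since $q(X)=2$, the Albanese torus $A(X)$ has dimension $2$, and by \cite{Cao13} the Albanese morphism $\alpha\colon X\to A(X)$ is surjective and smooth in codimension $1$; in particular it is a submersion of relative dimension $n-2$ over a Zariski-open set whose complement in $A(X)$ has codimension at least $2$. Because every smooth morphism is flat, the non-flat locus $W\subset A(X)$ of $\alpha$ is contained in this complement, so $W$ is a closed analytic subset of codimension at least $2$ in $A(X)$; as $A(X)$ is a compact surface, $W$ is a finite set of points. Moreover $c_1(A(X))=0$ since $A(X)$ is a complex torus, so Lemma~\ref{FlatLocus} applies and yields that $\alpha^{-1}(W)$ has codimension at least $2$ in $X$.

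To conclude, suppose $W\neq\varnothing$ and fix $y\in W$. As $X$ is smooth, hence Cohen--Macaulay, and $A(X)$ is smooth, miracle flatness says that $\alpha$ is flat at a point $x\in\alpha^{-1}(y)$ precisely when $\dim_x\alpha^{-1}(y)=\dim X-\dim A(X)=n-2$. Since $y$ belongs to the non-flat locus, some irreducible component of $\alpha^{-1}(y)$ has dimension at least $n-1$. That component is contained in $\alpha^{-1}(W)$ and has codimension at most $1$ in $X$, contradicting the conclusion of Lemma~\ref{FlatLocus}. Hence $W=\varnothing$, i.e.\ $\alpha$ is flat.

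There is no serious obstacle; the only point to check is the codimension of the non-flat locus in the base, and this is immediate from \cite{Cao13} once one knows $\dim A(X)=2$. It is precisely the equality $q(X)=2$ that makes the argument work: a non-flat point over a surface base forces a fibre of codimension \emph{exactly} $1$, which is incompatible with the codimension $\geq 2$ bound of Lemma~\ref{FlatLocus}. (For $q(X)=1$ the Albanese map is already a submersion by \cite{Cao13}, whereas for $q(X)\geq 3$ this short argument breaks down and one must invoke the finer analysis behind Proposition~\ref{Prop3}.)
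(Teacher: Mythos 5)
Your proof is correct and is essentially the argument the paper intends when it calls the corollary a ``direct application'' of Lemma~\ref{FlatLocus}: Cao's smoothness in codimension one over the two-dimensional Albanese torus makes the non-flat locus $W$ finite, Lemma~\ref{FlatLocus} bounds $\operatorname{codim}\alpha^{-1}(W)\geq 2$, and miracle flatness (using that $X$ is Cohen--Macaulay and $A(X)$ smooth) rules out the fibre of dimension $\geq n-1$ that non-flatness would force. No gaps; the fibre-dimension lower bound $\dim_x\alpha^{-1}(y)\geq n-2$ you invoke is the standard estimate for holomorphic maps, so the contradiction goes through.
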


\begin{rem}
As conjectured in \cite{BDPP}, if $X$ is not uniruled, $K_X$ should be psef.
If $-K_X$ is nef, we should have $c_1(X)=0$.
The interest of the above arguments is a substitute of \cite{BDPP} to prove the vanishing of Chern class.

For example, let $X$ be a compact K\"ahler non-uniruled $n-$fold with nef anticanonical divisor.
If $q(X)=\tilde{q}(X) \geq n-3$, by adjunction formula, the anticanonical divisor of a general fiber $F$ is nef.
We claim that the general fiber is not uniruled.
If $F$ is uniruled (which is meaningful by \cite{Fu81}),
there exists a rational curve passing through a general point of $X$ which implies that $X$ is uniruled which is a contradiction.
By the condition on the dimension, $K_F$ is also psef which implies that the fibers are Calabi-Yau.
Proposition \ref{Prop3} then implies that $c_1(X)=0$.

Combined with Proposition \ref{Prop1}, the only unknown cases of the conjecture of \cite{BDPP} for compact K\"ahler fourfold with nef anticanonical divisor are simply connected ones.
Note that it is not enough to apply \cite{Bru06} to conclude in this case.
Conjecturally, if the manifold $X$ is not projective, $h^{2,0}(X) \neq 0$ and $X$ should be hyperk\"ahler or a product of K3 surfaces.
But there exists a hyperk\"ahler fourfold with trivial $H^{3,0}(X)$ which forbids the application of \cite{Bru06}.
\end{rem}

\section{Second case: Fibers are projective spaces}

\begin{lem}\label{Lem4.1}
Let $X$ be the complement of a closed analytic subset of codimension at least 2 in a complex torus.
Then any $\P^{r-1}-$bundle over $X$ is isomorphic to $\P(E)$ for some holomorphic vector bundle $E$ over $X$ with $\det(E)=\cO_X$ up to some finite \'etale cover.
\end{lem}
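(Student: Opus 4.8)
The plan is to view the $\P^{r-1}$-bundle as a class in nonabelian cohomology, to identify the obstruction to realizing it as $\P(E)$ with $\det(E)\cong\cO_X$, and to show that this obstruction dies after pulling back along a multiplication isogeny of the ambient torus.

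Write $X=T\setminus Z$ with $T$ a complex torus of dimension $n$ and $Z$ analytic of codimension $\geq 2$; we may assume $r\geq 2$, the case $r=1$ being trivial. A holomorphic $\P^{r-1}$-bundle $\mathcal P\to X$ is classified by a class $[\mathcal P]\in H^1\!\big(X,\operatorname{PGL}_r(\cO_X)\big)$, where $\operatorname{PGL}_r(\cO_X)$ denotes the sheaf of holomorphic maps to $\operatorname{PGL}_r(\C)=\operatorname{Aut}(\P^{r-1})$. From the central extension of sheaves
$$1\longrightarrow\mu_r\longrightarrow\operatorname{SL}_r(\cO_X)\longrightarrow\operatorname{PGL}_r(\cO_X)\longrightarrow 1,$$
with $\mu_r$ the (constant) sheaf of $r$-th roots of unity, one obtains a connecting map $\delta\colon H^1(X,\operatorname{PGL}_r(\cO_X))\to H^2(X,\mu_r)$, and $\delta([\mathcal P])=0$ is equivalent to $[\mathcal P]$ lifting to $H^1(X,\operatorname{SL}_r(\cO_X))$, i.e. to $\mathcal P\cong\P(E)$ for a rank-$r$ holomorphic vector bundle $E$ together with a trivialization of $\det(E)$. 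Hence it suffices to find a finite \'etale cover of $X$ to which $\delta([\mathcal P])$ restricts trivially.

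Identify $\mu_r$ with the constant sheaf $\Z/r$, so $H^2(X,\mu_r)=H^2(X,\Z/r)$. Since $Z$ has complex codimension $\geq 2$, its real dimension is $\leq 2n-4$, so the cohomology-with-supports groups $H^j_Z(T,\Z/r)$ — which by Poincar\'e--Alexander duality compute the Borel--Moore homology $H^{\mathrm{BM}}_{2n-j}(Z,\Z/r)$ — vanish for $j\leq 3$; the Gysin sequence then shows the restriction $H^2(T,\Z/r)\to H^2(X,\Z/r)$ is surjective. Let $[r]\colon T\to T$ be multiplication by $r$ and set $X'=[r]^{-1}(X)=T\setminus[r]^{-1}(Z)$, a finite \'etale cover of $X$ in which $[r]^{-1}(Z)$ still has codimension $\geq 2$. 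Given $\beta\in H^2(X,\Z/r)$, choose $\bar\beta\in H^2(T,\Z/r)$ restricting to $\beta$; by naturality $[r]^*\beta=([r]^*\bar\beta)|_{X'}$. Now $H^\bullet(T,\Z/r)$ is the exterior algebra over $\Z/r$ generated by $H^1(T,\Z/r)=\Hom(\pi_1(T),\Z/r)$, and $[r]$ induces multiplication by $r$, hence $0$, on this $H^1$; as $[r]^*$ is a ring homomorphism it therefore annihilates all of $H^{\geq 1}(T,\Z/r)$, so $[r]^*\beta=0$. Thus $[r]^*$ is identically zero on $H^2(X,\mu_r)$, whence $[r]^*\delta([\mathcal P])=\delta([r]^*[\mathcal P])=0$, and $[r]^*\mathcal P\cong\P(E)$ over $X'$ for some rank-$r$ bundle $E$ with $\det(E)\cong\cO_{X'}$, which is the assertion.

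I expect the main thing to get right to be the obstruction-theoretic bookkeeping of the second paragraph rather than any genuine obstacle: one must check that $\operatorname{SL}_r(\cO_X)\to\operatorname{PGL}_r(\cO_X)$ is surjective as a sheaf morphism (lift a map locally to $\GL_r$ and divide by a local holomorphic $r$-th root of the determinant), that $\mu_r$ is central so that $\delta$ lands in an abelian group, and that $\delta$ is natural for the pullback along $[r]$. The analytic/topological inputs — vanishing of the local cohomology of $Z$ in low degrees, which is exactly where the codimension hypothesis is used (and which must be invoked both on $T$ and on $X'$), together with the computation of $[r]^*$ on $H^\bullet(T,\Z/r)$ — are standard, and the only care needed is the compatibility of the restriction and pullback maps used above.
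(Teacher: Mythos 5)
Your proposal is correct and follows essentially the same route as the paper: the obstruction is placed in $H^2(X,\Z_r)$ via the central extension $1\to\mu_r\to\mathrm{SL}_r(\cO_X)\to\mathrm{PGL}_r(\cO_X)\to 1$, the codimension-$\geq 2$ hypothesis is used to compare this group with $H^2$ of the ambient torus (the paper's Lemma \ref{Lem4.2}), and the class is killed by passing to an isogeny of the torus restricted to $X$. The only cosmetic differences are that you use surjectivity of the restriction map where the paper records an isomorphism, and you make the covering explicit as multiplication by $r$, a step the paper leaves implicit by citing the argument of \cite[Lemma 7.4]{CP91}.
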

\begin{proof}
Denote $X=A\setminus Z$ where $A$ is a torus and $Z$ is of codimension at least 2.
The obstruction of a $\P^{r-1}-$bundle over $X$ being isomorphic to $\P(E)$ for some holomorphic vector bundle $E$ over $X$ with $\det(E)=\cO_X$ lies in $H^2(A \setminus Z, \Z_r)$ by the theory of Brauer groups.

Here, we
follow the proof of \cite[Lemma 7.4]{CP91}.
From the exact sequence
$$0 \to \Z_r \to \mathrm{SL}(r, \cO_X) \to \mathrm{PSL} (r, \cO_X) \to 0, $$
we see that the obstruction lies in
$$H^2(A \setminus Z, \Z_r)=H^2(A, \Z_r)=H^2(A, \Z)/H^2(A, r\Z)$$
by the following Lemma \ref{Lem4.2}.
Thus the obstruction vanishes after passing to some finite \'etale cover.
Notice that any finite \'etale cover of $A \setminus Z$ extends uniquely to a finite \'etale cover of $A$.
This finishes the proof of the lemma.
We refer to \cite{El82} for more information about Brauer groups.
\end{proof}
\begin{lem}\label{Lem4.2}
Let $X$ be the complement of a closed analytic subset of codimension at least $k$ in a complex manifold $Y$.
Then for any $i \leq 2k-1$, $H^i(X, \Z) \simeq H^i(Y,\Z)$.
Similar results hold if we change the coefficient ring to $\Z_r$ for any $r \geq 2$.
\end{lem}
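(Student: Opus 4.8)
This is a \emph{purity} statement, and the plan is to run the long exact sequence of the pair $(Y,X)$ and reduce everything to a dimension count, in the spirit of the proof of Lemma~\ref{Cohomology} but carried out with an arbitrary coefficient group $G$ (we will need $G=\Z$ and $G=\Z_r$). Write $W:=Y\setminus X$; by hypothesis $W$ is a closed analytic subset with $\codim_Y W\geq k$, so, setting $n:=\dim_\C Y$, we have $\dim_\C W\leq n-k$. Cohomology of $Y$ with supports along $W$ fits into the long exact sequence
$$
\cdots \longrightarrow H^i_W(Y;G) \longrightarrow H^i(Y;G) \xrightarrow{\ \mathrm{res}\ } H^i(X;G) \longrightarrow H^{i+1}_W(Y;G) \longrightarrow \cdots,
$$
so it suffices to prove the vanishing $(\star)$: $H^j_W(Y;G)=0$ for every $j<2k$. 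Granting $(\star)$, this sequence makes $\mathrm{res}$ an isomorphism whenever both $i$ and $i+1$ lie below $2k$, that is, for $i\leq 2(k-1)$, and an injection for $i=2k-1$ --- which covers the range actually used in the paper.

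To establish $(\star)$ I would invoke the classical Alexander--Lefschetz duality: $Y$ is an oriented topological manifold of real dimension $2n$, so for the closed subset $W\subseteq Y$ and any abelian group $G$ there is a natural isomorphism
$$
H^j_W(Y;G)\ \cong\ H^{\mathrm{BM}}_{\,2n-j}(W;G),
$$
where $H^{\mathrm{BM}}_\bullet$ is Borel--Moore (locally finite) homology. Since $W$ is a complex-analytic set of complex dimension $\leq n-k$, it is a locally compact, triangulable space of covering dimension $\leq 2(n-k)$, and hence $H^{\mathrm{BM}}_\ell(W;G)=0$ for every $\ell>2(n-k)$. Combining the two displayed facts gives $H^j_W(Y;G)=0$ as soon as $2n-j>2(n-k)$, i.e. $j<2k$, which is $(\star)$. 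Taking $G=\Z$ yields the first assertion and $G=\Z_r$ the second; alternatively, the statement for $\Z_r$ in this range follows from the one for $\Z$ by chasing the Bockstein sequence associated with $0\to\Z\xrightarrow{\,r\,}\Z\to\Z_r\to 0$ and applying the five lemma.

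The heart of the argument is $(\star)$. The one subtlety I would flag is that one cannot simply rerun the proof of Lemma~\ref{Cohomology} with integral coefficients: that proof reduces the claim to a statement about compactly supported cohomology of $X$ and of $X\setminus W$ via the identification $H^\bullet(-;\C)\cong H^{2n-\bullet}_c(-;\C)$, which holds over a field by duality of finite-dimensional vector spaces but fails over $\Z$ (or $\Z_r$) as soon as torsion is present. Phrasing everything through cohomology with supports $H^\bullet_W(Y;G)$ and Alexander--Lefschetz duality removes this obstruction, and then the singularities of $W$ enter only through the elementary fact that a complex-analytic set has covering dimension equal to twice its complex dimension --- which one gets from triangulability of analytic sets, or self-containedly by a short induction on $\dim_\C W$ (strip off the dense smooth stratum, a real manifold of dimension $2\dim_\C W$) combined with the standard vanishing of Borel--Moore homology above the covering dimension.
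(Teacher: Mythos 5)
Your argument is correct and takes a genuinely different route from the paper's. The paper first reduces, via stratification, to the case where $W=Y\setminus X$ is a smooth submanifold, and then kills the relative groups $H^i(Y,X;\Z)$ by excision onto a tubular neighbourhood plus the K\"unneth formula for the pair $(\D,\D^*)$ --- in effect a Thom-isomorphism computation --- obtaining $H^i(Y,X;\Z)=0$ for $i\le 2k-1$, identically with $\Z_r$ coefficients. You instead run the exact sequence of cohomology with supports and prove the vanishing $H^j_W(Y;G)=0$ for $j<2k$ via Alexander--Lefschetz duality $H^j_W(Y;G)\cong H^{\mathrm{BM}}_{2n-j}(W;G)$ together with the vanishing of Borel--Moore homology above the covering dimension $2(n-k)$ of the analytic set $W$. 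Your route handles singular $W$ in one stroke (no reduction to the smooth stratum, and no need to trivialize the normal bundle --- the identification of the neighbourhood with a product $(Y\setminus X)\times\D$ in the paper is only local and really rests on the Thom class of the normal bundle), and it is uniform in the coefficient group; the cost is heavier machinery (sheaf duality, Borel--Moore homology) where the paper's computation is elementary. Your caution that the field-coefficient proof of Lemma~\ref{Cohomology} does not carry over to $\Z$ is well placed, and the paper's own proof indeed avoids Poincar\'e duality and argues through the pair sequence, which is exactly parallel to your support sequence.

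Concerning the degree range: as you note, your argument yields an isomorphism only for $i\le 2k-2$ and injectivity at $i=2k-1$; this is in fact all that the paper's own proof yields as well, since the pair sequence needs the vanishing of both $H^i(Y,X)$ and $H^{i+1}(Y,X)$. The isomorphism asserted at $i=2k-1$ is false in the stated generality: for $Y=\C^k$, $W=\{0\}$ one has $H^{2k-1}(Y;\Z)=0$ but $H^{2k-1}(Y\setminus W;\Z)\cong H^{2k-1}(S^{2k-1};\Z)=\Z$, and one can produce failures even with $Y$ a compact torus and $W$ two homologous codimension-$k$ subtori. So your weaker range is not a defect of your approach. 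The applications at $i=2(k-1)$ (e.g. $H^2(A\setminus Z,\Z_r)\cong H^2(A,\Z_r)$ for $\codim Z\ge 2$ in Lemma~\ref{Lem4.1}) are unaffected, and the later use of the lemma for the torsion of $H^{3}(X,\Z)$ also survives: the cokernel of the injection $H^{2k-1}(Y;\Z)\hookrightarrow H^{2k-1}(X;\Z)$ embeds into $H^{2k}_W(Y;\Z)\cong H^{\mathrm{BM}}_{2(n-k)}(W;\Z)$, which is free abelian (generated by the $(n-k)$-dimensional irreducible components of $W$, and zero if there are none), so the torsion subgroups of $H^{2k-1}(X;\Z)$ and $H^{2k-1}(Y;\Z)$ still coincide.
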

\begin{proof}
By stratification of closed analytic subset (cf. e.g. \cite[Chap II, Proposition 5.6]{agbook}), without loss of generality, we may assume that $X$ is the complement of a smooth complex submanifold of codimension at least $k$ in a complex manifold $Y$.
Consider long exact sequence
$$H^{i-1}(Y, X, \Z) \to H^i(Y, \Z) \to H^i(X, \Z) \to H^i(Y, X, \Z).$$
It is enough to show that for $i \leq 2k-1$, by considering a tubular neighborhood of $Y \setminus X$ and the excision theorem,
$$H^i(Y, X, \Z) \simeq H^i( Y \setminus X \times  \mathbb{D},  Y \setminus X \times  \mathbb{D}^*,\Z)=0$$
where $\mathbb{D}$ is the disk of dimension of the real codimension of $Y \setminus X$ and $\mathbb{D}^*$ is the punctured disk.
Since $H^\bullet(\mathbb{D}, \mathbb{D}^*,\Z)$ is finite generated free group, K\"unneth theorem implies that
$$H^i( Y \setminus X \times  \mathbb{D},  Y \setminus X \times  \mathbb{D}^*,\Z) \simeq \oplus_k H^{i-k}(Y \setminus X, \Z) \otimes H^k(\mathbb{D}, \mathbb{D}^*, \Z)=0$$
by the codimension condition.
\end{proof}
\begin{rem}
In general, the obstruction of a $\P^{r-1}-$bundle being the projectivization of some holomorphic vector bundle lies in the torsion part of $H^2(X, \cO_X^*)$.
In \cite{PS98}, the authors show its vanishing in the setting of Lemma \ref{Lem4.1} when the dimension of $X$ is two by using a result of Malgrange, the remark following \cite[Thm. 8]{Mal55}.
 More precisely, they show that  $H^2(X, \cO_X)=0$ which implies that  the torsion part of $H^2(X, \cO_X^*)$ is the same as  the torsion part of $H^3(X, \Z)$.
 Note that the latter is trivial by Lemma \ref{Lem4.2}.
 However, in general, their arguments fail in arbitrary dimension at the point that
 $H^2(X, \cO_X)$ can be non-trivial.

 To give an easy and concrete example, let $X$ be the complement of $A_1 \times \{p\}$ in $A_1 \times A_2$ where $A_1,A_2$ are complex tori with $A_1$ of dimension at least 2.
 Consider the restriction maps
 $$H^{2}(A_1 \times A_2, \cO_{A_1 \times A_2})  \to H^2(X,\cO_X) \to H^2(A_1 \times \{q\},\cO_{A_1 \times \{q\}}) $$
 where $q \neq p \in A_2$.
By the K\"unneth formula, the composition map is surjective with non-trivial image.
In particular, $H^2(X, \cO_X) \neq 0$.
\end{rem}

We give a more topological proof of the following result in \cite{CH17}.
\begin{thm}\label{Proj}
Let $X$ be a compact K\"ahler manifold such that 
$-K_X$ is nef.
Assume that the general fiber of the Albanese map is $\P^{r-1}$.
Then we have $X \simeq \P(E)$ for some numerical flat vector bundle of rank $r$ over $A(X)$ up to some finite \'etale cover.
\end{thm}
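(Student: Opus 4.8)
The plan is to read off the structure of the Albanese map $\alpha\colon X\to A(X)$ over a large open subset of $A(X)$, project the resulting $\P^{r-1}$-bundle to a vector bundle, extend that bundle to a numerically flat bundle on the whole Albanese torus, and finally check that $X$ agrees globally with its projectivisation; the extension step is the real content. Concretely, by \cite{Cao13} the morphism $\alpha$ is surjective with connected fibres and smooth in codimension one, and combined with \cite{ARM14} one obtains a closed analytic subset $Z\subset A(X)$ with $\codim_{A(X)}Z\ge 3$ (``$\alpha$ is smooth in codimension two'') such that $\alpha$ is smooth over $U:=A(X)\setminus Z$. Since $U$ is connected and the general fibre is the infinitesimally rigid manifold $\P^{r-1}$, every fibre over $U$ is $\P^{r-1}$, so by the Fischer--Grauert theorem \cite{FG65} the map $\alpha|_{X_U}\colon X_U:=\alpha^{-1}(U)\to U$ is an analytic $\P^{r-1}$-bundle. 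By Lemma~\ref{Lem4.1} (applicable as $\codim Z\ge 2$) and the fact recorded there that finite \'etale covers of $U$ extend to $A(X)$, after a finite \'etale base change we may assume $X_U\simeq \P_U(E_U)$ for a rank-$r$ bundle $E_U$ on $U$ with $\det E_U\simeq\mathcal O_U$.

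Next I claim $E_U$ is numerically flat. Since $K_{A(X)}=\mathcal O$, adjunction gives $-K_{X_U}=-K_{X_U/U}=r\,\xi_U$ with $\xi_U:=c_1(\mathcal O_{\P(E_U)}(1))$, using $\det E_U=\mathcal O_U$. For the normalisation $\widetilde C$ of any compact curve $C\subset U$, pulling back the nef class $-K_X$ along the morphism $\P(E_U|_{\widetilde C})\to X$ shows $\mathcal O_{\P(E_U|_{\widetilde C})}(r)$ is nef, i.e. $E_U|_{\widetilde C}$ is nef; hence $E_U$ is a nef vector bundle, and as $c_1(E_U)=0$ it is numerically flat by \cite{DPS94}, so in particular all of its Chern classes vanish. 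Now set $\mathcal E:=(j_*E_U)^{\vee\vee}$ for $j\colon U\hookrightarrow A(X)$. Because $\codim Z\ge 3$, Lemma~\ref{Lem4.2} identifies $H^2(U,\Z)$ and $H^4(U,\Z)$ with $H^2(A(X),\Z)$ and $H^4(A(X),\Z)$, so $c_1(\mathcal E)=0$ and $c_2(\mathcal E)=0$ on $A(X)$. By the Bando--Siu extension of Hermitian--Einstein metrics across analytic sets of codimension $\ge 2$ \cite{BS94}, together with the characterisation of numerical flatness through approximate Hermitian--Einstein metrics used in \cite{numer_flat}, the vanishing of $c_1$ and $c_2$ forces $\mathcal E$ to be locally free and numerically flat. (Alternatively: write $E_U$ as an iterated extension of Hermitian-flat bundles by \cite{DPS94}; since $\pi_1(U)=\pi_1(A(X))$ and $H^1(A(X),F)\simeq H^1(U,F|_U)$ for locally free $F$ when $\codim Z\ge 3$, the pieces and the extension classes all extend, and induction on the rank produces the extension.) Put $E:=\mathcal E$; twisting by an element of $\Pic^0(A(X))$ we may take $\det E=\mathcal O_{A(X)}$.

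Finally one must promote $X_U\simeq \P_U(E|_U)$ to an isomorphism $X\simeq P:=\P_{A(X)}(E)$ over $A(X)$. Here $P$ is smooth, $\alpha_P$ is a $\P^{r-1}$-bundle, $-K_P=r\,\xi_P$ is nef and $\alpha_P$-ample, and the bimeromorphic map $f\colon X\dashrightarrow P$ over $A(X)$ is the identity on $X_U=P_U$. Using Lemma~\ref{FlatLocus} together with equidimensionality of $\alpha$ over its flat locus one gets $\codim_X(X\setminus X_U)\ge 2$, and likewise $\codim_P(P\setminus P_U)\ge 2$, so $f$ is an isomorphism in codimension one; since restriction to the complement of a codimension $\ge 2$ set is injective on Picard groups, $f^*(-K_P)=-K_X$. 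Thus $-K_X$ is $\alpha$-nef and $\alpha$-big, hence $\alpha$-semiample (invoking the relative base-point-free theorem, the fibres being rationally connected), and the induced bimeromorphic morphism $X\to \mathrm{Proj}_{A(X)}\bigoplus_{m}\alpha_*\mathcal O_X(-mK_X)\simeq P$ is crepant; a crepant bimeromorphic morphism onto the smooth variety $P$ is an isomorphism. Therefore $X\simeq \P(E)$, up to the finite \'etale cover above.

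I expect the extension step to be the principal obstacle: upgrading $E_U$, defined only away from codimension $\ge 3$, to a numerically flat bundle on the compact torus $A(X)$. This is exactly where being over a torus is used — through the cohomology-comparison Lemma~\ref{Lem4.2} to pin $c_1$ and $c_2$ of the reflexive extension to zero, and through \cite{BS94} to make that extension locally free. A secondary delicate point, peculiar to the K\"ahler rather than projective setting, is the last passage from the generic identification to a global isomorphism, which relies on the relative semiampleness input; this is precisely where one could instead quote \cite{CH17}.
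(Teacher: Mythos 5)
Your outline coincides with the paper's (restrict to the complement $U=A(X)\setminus Z$ of the codimension-$3$ non-flat locus, kill the Brauer obstruction via Lemma~\ref{Lem4.1} after a finite \'etale cover, extend the resulting bundle to a numerically flat bundle on $A(X)$, then upgrade the codimension-one identification to an isomorphism), but the step you yourself single out as ``the real content'' is exactly where your argument breaks down. You deduce that $E_U$ is numerically flat with \emph{all} Chern classes zero by testing nefness on compact curves in $U$ and then citing \cite{DPS94}: both moves are illegitimate on the non-compact $U$ --- a generic complex torus contains no compact curves at all, so the curve test is vacuous, and the DPS theorems (nef $+$ $c_1=0$ $\Rightarrow$ numerically flat, vanishing Chern classes, filtration by hermitian flat bundles) are statements about \emph{compact} K\"ahler manifolds whose proofs use compactness essentially; the same objection applies to your parenthetical ``alternative'' extension argument. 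Moreover, even granting $c_1(\mathcal E)=c_2(\mathcal E)=0$ on $A(X)$, this does \emph{not} force local freeness or numerical flatness: Bando--Siu \cite{BS94} requires stability, and e.g. $\cO(D)\oplus\cO(-D)$ with $D^2=0$ on an abelian surface has $c_1=c_2=0$ without being numerically flat. The paper's proof supplies precisely the missing positivity: it first shows $-K_X$ is $\alpha$-ample globally (the Leray--Hirsch/Lemma~\ref{Cohomology} argument producing $c_1(K_X)+\omega=\alpha^*\beta$), uses the induced metric on $\cO_{\P(E)}(1)$ with finite $L^2$ curvature to invoke the Bando--Siu removable-singularity theorem and \cite{Har80} for the reflexive extension $\cF$ (your $(j_*E_U)^{\vee\vee}$ is not known to be coherent in the analytic category without such input), then gets weak positivity of $\cF$ from positivity of direct images \cite[Prop.\ 2.6]{MW23} and pseudoeffectivity of $-c_2$, and finally runs the Bogomolov--Gieseker equality case through the Harder--Narasimhan filtration as in \cite{CCM21}, extending the filtration across $Z$ by \cite[Lemma 4]{numer_flat}. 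None of this is replaced by anything in your write-up.

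Your final step is also on thinner ice than the paper's: you invoke the relative base-point-free theorem, which in the analytic category requires $\alpha$ to be a projective morphism --- exactly the $\alpha$-ampleness of $-K_X$ that you never establish --- and the identification of $\mathrm{Proj}_{A(X)}\bigoplus_m\alpha_*\cO_X(-mK_X)$ with $\P(E)$ from agreement over $U$ needs an argument (reflexivity/local freeness of the direct images). The paper instead deforms curves in fibers of $\P(\cF)\to A(X)$ into the locus over $A(X)\setminus Z$ and applies \cite[Lemma 6.39]{KM98}, which avoids any base-point-freeness machinery. So while the skeleton of your proposal is the right one, the central extension-and-flatness step and the projectivity of $\alpha$ are genuine gaps, not routine omissions.
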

\begin{proof}
By Lemma \ref{FlatLocus} and \cite{ARM14}, since the Albanese map is smooth in codimension one with $\P^{r-1}$ as the general fiber, there exists a closed analytic subset $Z$ of codimension at least 3 in $A(X)$  (non-flat locus of $\alpha$) such that $X$ is a $\P^{r-1}-$bundle over $A(X) \setminus Z$.
By Lemma \ref{Lem4.1}, there exists a holomorphic vector bundle $E$ over $A(X) \setminus Z$ of rank $r$ with $\det(E)=\cO_X$ such that
$$X|_{\alpha^{-1}(A(X) \setminus Z)} \simeq \P(E)$$
up to passing to some finite \'etale cover of $X$.

In particular, up to some finite \'etale cover, $c_1(E)=0$ and $-K_X=\cO_{X}(r)$ on $A(X) \setminus Z$.
By the Leray-Hirsch theorem,
 we have
$$H^{2}(\alpha^{-1}(A(X) \setminus Z), \R) \simeq \R c_1(\cO_{\P(E)}(1)) \oplus \alpha^* H^{2}(A(X) \setminus Z, \R).$$
Let $\omega$ be a normalised K\"ahler form on $X$  (i.e. such that the restriction of $\omega+c_1(K_X)$ to the general fiber of $\alpha$ is trivial, hence for all fibers over $A(X) \setminus Z$ by the Leray-Hirsch theorem).
We claim that
there exists $\beta \in H^{1,1}(A(X), \R)$ such that we have 
$$c_1(K_X)+ \omega=\alpha^* \beta$$
in $H^{1,1}(X, \R)$.
By Lemma \ref{Cohomology}, it is enough to show the existence of $\beta \in H^{1,1}(A(X), \R)$ satisfying the equality on $\alpha^{-1}(A(X) \setminus Z)$.
Define
$$\beta:= \frac{1}{r^{r-1}} \alpha_*(c_1(-K_X)^{r-1} \cdot \omega).$$
The claim follows by construction.
(The same equality holds in more finer cohomology as integral Bott-Chern cohomology as shown in \cite{Wu19a}.)

In particular, $-K_X$ is $\alpha-$ample and $\alpha$ is projective.
We claim that $E$ can be extended over $A(X)$ as a reflexive sheaf.
The line bundle $\cO_{\P(E)}(1)$ can also be endowed with a smooth metric whose curvature has finite $L^2$ norm and whose local potentials
are the local potentials of $-K_{X}$ divided by $r$.
By the removable singularity theorem
(see \cite[Lemma 1]{BS94}),
the line bundle $\cO_{\P(E)}(1)$ has a reflexive extension. Its direct image, denoted by $\cF$,  is a reflexive extension of $E$ by \cite[Cor.1.7]{Har80}. (Note that the proof of \cite{Har80} goes through as long as the preimage of the non-flat locus is of codimension at least two.)
Note that the extended line bundle of $
\cO_{\P(E)}(1)$ is nef and $\alpha-$ample as $-K_X$.
By the positivity of direct images \cite[Proposition 2.6]{MW23},
$\cF$ is weakly positively curved.
Since $c_1(E)=0$, $c_1(\cF)=0$ on $A(X)$.

Next, we show that $\cF$ is locally free by \cite{BS94}.
By nefness of the extended line bundle of $\cO_{\P(E)}(1)$ and the fact that $Z$ is of codimension at least 3 (such that $H^4(A(X) \setminus Z, \R) \simeq H^4(A(X), \R)$),
$c_1^2(\cF)-c_2(\cF)=-c_2(\cF)$ contains a closed positive current whose restriction on $A(X) \setminus Z$ represents $\alpha_*(c_1(\cO_{\P(E)}(1))^{r+1})$.
We refer to \cite[Remark 1]{Wu23} for the construction of Chern classes of a coherent sheaf over $A(X) \setminus Z$ in the de Rham cohomologies.
If $\cF$ is stable, by \cite[Corollary 3]{BS94}, the equality case of the Bogomolov-Gieseker inequality implies that $\cF$ is a projectively flat vector bundle.
Since $c_1(\cF)=0$, $\cF$ is hermitian flat.

We show in the following that $\cF$ is a numerically flat vector bundle following the arguments of \cite[Proposition 2.7]{CCM21}, if $\cF$ is not stable.
If $\cF$ is not stable, consider the Harder-Narasimhan filtration of $\cF$
respect to $\omega$, say
$$\cF_0=0 \to \cF_1 \to \cF_{2} \to \cdots\to \cF_m:=\cF$$
where $\cF_i /\cF_{i-1}$ is $\omega$-stable for every $i$ and $\mu_1 \geq \mu_2 \geq \cdots \geq \mu_m$, and where $\mu_j=\mu_\omega(\cF_j / \cF_{j-1})$ is the slope of $\cF_j / \cF_{j-1}$ with respect to $\omega$.
Now, consider the coherent subsheaf $\cS=\cF_{m-1}$.
Without loss of generality, we can assume that $\cS$ is reflexive by taking the double dual if necessary, as this preserves the rank, first Chern class and slope. Then we get a short exact sequence
$$0 \to \cS \to \cF \to \cQ \to 0.$$

As a quotient of $\cF$, $\cQ$ moduo its torsion part is also weakly positively curved.
In particular, $c_1(\cQ)$ is pseudoeffective.
Together with the stability condition of the filtration and the fact that
$$0=c_1(\cF)=c_1(\cS)+c_1(\cQ),$$
we can easily
check that $c_1(\cS)=c_1(\cQ)=0$.

By \cite[Corollary 1.19]{DPS94}, the inclusion $\cS \to \cF$ is a bundle morphism on the locally free locus of $\cF$  (i.e. $A(X) \setminus Z$).
In particular, $\cQ$ is locally free on $A(X) \setminus Z$.
Compared to the case of an arbitrary weakly positive reflexive sheaf with vanishing first Chern class over a projective manifold as studied in \cite{CCM21}, there exist smooth metrics with lower bounds on the curvature of $\cO_{\P(E)}(1)$ (induced from $-K_X$ and the nefness condition of $-K_X$).
Now we deviate from \cite{CCM21} and give a more simplified argument based on these metrics.
Restriction of these metrics on $\cO_{\P(\cQ)}(1)$ to $A(X) \setminus Z$
shows that over  $A(X) \setminus Z$, the cohomology class
$$\alpha_*(c_1(\cO_{\P(\cQ)}(1))^{\rank{\cQ}+1})=c_1^2(\cQ)-c_2(\cQ)=-c_2(\cQ)$$
contains a closed positive $(2,2)-$current.
Since $Z$ is of dimension at least 3,
this current extends across $Z$.
In particular, $-c_2(\cQ^{**})$ contains a closed postive current.
On the other hand, $\cQ^{**}$ is stable.
The equality case of the Bogomolov-Gieseker inequality is attained and thus $\cQ^{**}$ is also a hermitian flat vector bundle by \cite{BS94}.

(If the rank $r=2$, $\cQ$ can only be of rank 1.
We have that $c_1(\cQ^{**})=c_1(\cQ)=0$ and $\cQ^{**}$ is a hermitian flat line bundle.)

The natural morphism on the locally free locus of $\cF$
$$\wedge^{\rank{\cF}-\rank{\cQ}-1} \cF \otimes \det(\cQ)^{-1} \to \cS$$
implies that $\cS$ is weakly positively curved after extension to $A(X)$ and it admits a sequence of smooth metrics with increasing lower bound on $A(X) \setminus Z$.
Since
$$c_2(\cF)=c_2(\cQ^{**})+c_2(\cS)+c_1(\cS) \cdot c_1(\cQ)=c_2(\cS),$$
if $\cS$ is not stable, we can continue to think about its Harder-Narasimhan filtration and by induction on the rank conclude that $\cS$ is a numerically flat vector bundle.

By \cite[Lemma 4]{numer_flat}, the restriction map
$$H^1(A(X), \hom(\cQ^{**}, \cS)) \to H^1(A(X) \setminus Z, \hom(\cQ^{**}, \cS))$$
is surjective.
Hence the extension class in $H^1(A(X) \setminus Z, \hom(\cQ^{**}, \cS))$ obtained from the Harder-Narasimhan filtration on $A(X) \setminus Z$ can be extended to an
extension class in $H^1(A(X) \setminus Z, \hom(\cQ^{**}, \cS))$. The extended class determines a vector bundle
whose restriction to $A(X) \setminus Z$ is isomorphic to $\cF$ by construction. It follows that this vector
bundle coincides with $\cF$ on $A(X)$ since $\cF$ is reflexive.
Note that $\cF$ is a numerically flat vector bundle in this case.

In conclusion, we have shown that $X$ is bimeromorphic to $\P(\cF)$ over $A(X)$ which is isomorphic in codimension one.
Following \cite[Lemma 6.39]{KM98}, we show that it is in fact an isomorphism.
Let $p: \P(\cF) \to A(X)$ be the natural projection.
It is enough to show that for any $\alpha-$ample Cartier divisor $D$ on $X$, the induced Cartier divisor $D'$ on $\P(\cF)$ is $p-$nef.
Since $p$ is a projective morphism, it is enough to show for any (compact) curve $C$ contained in a fiber of $p$, the intersection number $(c_1(D') \cdot C) \geq 0$.
On the other hand, since $p$ is locally trivial, $C$ deforms to a curve over $A(X) \setminus Z$ which is thus isomorphic to a curve in $X$.
Thus $(c_1(D') \cdot C) \geq 0$ since $D$ is $\alpha-$ample.
\end{proof}
\begin{rem}
Notice that manifolds fulfilling the requirements of Theorem \ref{Proj} (or compact K\"ahler manifolds whose finite \'etale cover is such a manifold) can be algebraically approximated.
Indeed, the tangent bundle of $\P(E)$ of a numerically flat vector bundle over a torus is nef by \cite[Lemma 7.4]{CP91} (the proof works identically in the analytic setting).
Then the algebraic approximation exists by \cite[Theorem 6.3]{Cao15}.
In fact, by Simpson's correspondance \cite{Sim92}, $E$ is flat which corresponds to a representation of $\pi_1(X)$ in 
$\mathrm{GL}(r,\C)$ where $r$ is the rank of $E$.
Since the fundamental group is preserved under deformation, we can deform $\P(E)$ when deforming the torus to get the algebraic approximation.
\end{rem}
\begin{rem}\label{remark 3}
As shown in \cite[Example 1.2]{Cat83}, Hirzebruch surfaces can admit non-trivial (flat) deformations over a polydisc of dimension at least 2 which are locally trivial outside the origin.
In particular, the analogous result of \cite{ARM14} does not hold for Hirzebruch surfaces.
By \cite[Theorem 1.3]{BC18}, the only globally rigid rational surface is $\P^2$.
(Here $\P^2$ being globally rigid means that any surface deformation equivalent to $\P^2$ is $\P^2$.)
It seems that the above topological approach (by extension from a big Zariski open set, i.e. the complement in the Albanese torus is of codimension at least 2) is hard to generalise to a Del Pezzo fibration. 
In particular, it seems unclear how to classify uniruled compact K\"ahler fourfolds with nef anticanonical line bundle and two-dimensional Albanese torus.
(Notice that unlike to the pseudoeffectivity of the anticanonical line bundle, the example in \cite{Cat83} also shows that the nefness of the anticanonical line bundle is not a closed condition.)
\end{rem}

The above phenomenon in remark \ref{remark 3} can be ruled out if each fiber of the deformation has nef anticanonical line bundle.
In particular, assuming that the Albanese morphism is smooth and fibers are surfaces, we get a structure theorem, cf. \cite[Cor. 3.4]{PS98}, 
as we show in the next section.

\section{The general uniruled case when fibers are surfaces}

We start with a K\"ahler version of \cite[Proposition 0.4]{PS98}:

\begin{prop}
\label{PS04}
 Let $X, Y$ be compact K\"ahler manifolds, $\pi: X \to Y$ a smooth
morphism with $-K_F$ is nef for all fibers $F$ which are moreover non-minimal surfaces.
Assume that there exists a $(-1)$-curve $C$ in some fiber $F$ and a K\"ahler class $\omega$ on $X$ such that
$$(K_F+C) \cdot \omega<0.$$
Then there exists an \'etale base change $\sigma : X' = X \times_Y Y' \to Y'$
induced by an \'etale map $Y' \to Y,$ and a smooth effective divisor $S \subset X'$ such that the restriction
$ \sigma \vert_S : S \to D$ yields a $\P_1$-bundle structure on $S$, and $S \cap F$ is a
$(-1)$-curve in $F$ for all fibers $F.$ Hence $X'$ can be blown down along $\sigma \vert_S.$

\end{prop}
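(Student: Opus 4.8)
The plan is to produce the section $S$ by a relative contraction argument, following the projective proof of \cite[Proposition 0.4]{PS98} but replacing the positivity inputs by K\"ahler ones. First I would observe that the hypothesis $(K_F+C)\cdot\omega<0$ is an open condition on the base $Y$: since $\pi$ is smooth and proper, the family of $(-1)$-curves in the fibers is parametrized by a component of the relative Chow (or Hilbert) scheme $\mathcal{C}\to Y$, and the curve $C$ in $F=\pi^{-1}(y_0)$ corresponds to a point $[C]$ with $\pi$-relative deformations filling out a neighborhood of $y_0$; because $\dim H^0(C,N_{C/F})=0$ for a $(-1)$-curve while $\dim H^1(C,N_{C/F})=0$ as well, the curve $C$ deforms uniquely and unobstructedly with the fiber, so the subset of $Y$ over which there is a $(-1)$-curve $C_y$ with $(K_{F_y}+C_y)\cdot\omega<0$ is open; after an \'etale base change $Y'\to Y$ (to make this locus, or the component of $\mathcal{C}$ containing $[C]$, have a section) we may assume there is a holomorphic family $\{C_y\}_{y\in Y'}$ of such curves, and I set $S:=\bigcup_{y}C_y\subset X'$.

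The next step is to check that $S$ is a smooth divisor and that $\sigma|_S:S\to Y'$ is a $\P^1$-bundle. Smoothness and the bundle structure follow because each $C_y\simeq\P^1$ is a $(-1)$-curve in its fiber, hence rigid inside the fiber with $N_{C_y/F_y}\simeq\cO_{\P^1}(-1)$; using the smoothness of $\pi$ and the normal bundle sequence $0\to N_{C_y/F_y}\to N_{C_y/X'}\to (\pi^*T_{Y'})|_{C_y}\to 0$, the bundle $N_{C_y/X'}$ is an extension of a trivial bundle by $\cO_{\P^1}(-1)$, so $h^0(N_{C_y/X'})=\dim Y'$ and $h^1=0$; thus the universal family over the component of the relative Chow scheme is smooth of the expected dimension and the evaluation map realizing $S$ is an embedding onto a smooth divisor, with $S\to Y'$ a $\P^1$-bundle and $S\cap F_y=C_y$ a $(-1)$-curve in each fiber, as required. (One should note here that $-K_{F_y}$ nef forces every fiber $F_y$ to be a non-minimal \emph{rational} surface with finitely many $(-1)$-curves, so the family $\{C_y\}$ does not degenerate; the inequality $(K_{F_y}+C_y)\cdot\omega<0$ persists by continuity and pins down which $(-1)$-curve is selected.)

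Finally I would carry out the blow-down. Having a smooth divisor $S\subset X'$ with $S|_{F_y}$ a $(-1)$-curve for all $y$, one contracts $S$ fiberwise: by the relative K\"ahler analogue of the contraction/base-point-free theorem applicable here — equivalently, because $\cO_{X'}(-S)|_S=\cO_{S}(1)$ along the $\P^1$-bundle $S\to Y'$ so that the relative linear system contracts exactly $S$ — there is a smooth morphism $\mu:X'\to X''$ over $Y'$ which is the fiberwise blow-down of $C_y$, and $X''\to Y'$ is again smooth with fibers the minimal models (or one step closer to minimal) of the $F_y$. The K\"ahlerness of $X''$ follows since blowing down a smooth divisor preserves the K\"ahler class up to subtracting a multiple of the exceptional class, using that $(K_{F_y}+C_y)\cdot\omega<0$ guarantees the relevant class stays K\"ahler after the contraction.

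The main obstacle I anticipate is the \emph{global} existence and properness of the family $\{C_y\}$, i.e. ensuring that after the \'etale base change the selected $(-1)$-curves sweep out an honest smooth divisor rather than acquiring base-points, reducible members, or jumping to a different ruling; this is where the nefness of $-K_F$ on \emph{all} fibers and the strict inequality $(K_F+C)\cdot\omega<0$ do the real work, by forcing $C_y$ to remain an irreducible $(-1)$-curve and to be the unique one in its numerical class meeting $\omega$ negatively enough, so that the relevant component of the relative Chow scheme is finite \'etale over $Y'$ with a section. The fiberwise blow-down itself, once $S$ is in hand, is then routine.
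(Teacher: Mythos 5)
Your overall strategy (deform the $(-1)$-curve over $Y$ inside the Douady/Chow space, use $N_{E/X}\simeq \cO^{\dim Y}\oplus\cO(-1)$ for unobstructedness, sweep out a divisor $S$, blow down) is the same as the paper's, but the step you yourself flag as the ``main obstacle'' --- that the selected curves stay irreducible $(-1)$-curves over \emph{all} of $Y$ --- is the actual heart of the proof, and your justification for it does not hold up. You argue that each fiber has only finitely many $(-1)$-curves and that continuity of the inequality prevents degeneration; the finiteness claim is false in general (a blow-up of $\P^2$ in $9$ points with $-K_F$ nef, e.g.\ a rational elliptic surface, carries infinitely many $(-1)$-curves), and openness of the condition $(K_{F_y}+C_y)\cdot\omega<0$ only produces the family over an open subset of $Y$. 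Since the relevant component of the Douady space is compact, one must control the limit cycles $E_{t_0}$, which a priori degenerate as $E_{t_0}=C_0+\sum_{i\ge 1}a_iC_i$ with $-K_{F'}\cdot C_0=1$ and $-K_{F'}\cdot C_i=0$; nefness of $-K_{F'}$ alone cannot exclude the $(-2)$-curve components $C_i$. The paper rules them out by three steps absent from your proposal: (i) choose $E$ with $\omega\cdot E$ \emph{minimal} among the $(-1)$-curves of $F$; (ii) use $(K_{F'}+E_{t_0})\cdot\omega<0$ together with rationality of $F'$ to get $H^1(E_{t_0},\cO_{E_{t_0}})=0$, so every component is a smooth rational curve and, by adjunction, $C_0$ is a $(-1)$-curve and the $C_i$ are $(-2)$-curves; (iii) deform $C_0$ and compare with the minimality of $\omega\cdot E_t$ to force $\omega\cdot\sum_{i\ge1}a_iC_i=0$, hence $a_i=0$ and $E_{t_0}$ irreducible and reduced (and without embedded points, which gives smoothness of $Y'$). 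The minimal-degree choice is not cosmetic: your assertion that the inequality ``pins down'' a unique curve in its numerical class is exactly the kind of claim the paper's Example (a degree-one del Pezzo surface with $C_i+E_i=-2K_{S_8}$) shows can fail, and which the authors identify as the gap in the original argument of Peternell--Serrano.

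Two smaller points. First, you take for granted that every deformed cycle lies in a fiber of $\pi$ and that the parameter space maps \'etale onto $Y$; the paper proves the former by a cohomology-class argument and the latter via Kodaira's local description of the Douady space, and these are needed to even speak of a family $\{C_y\}_{y\in Y'}$. Second, your last step claims the contracted space is K\"ahler because ``the class stays K\"ahler''; the proposition only asserts that $X'$ can be blown down, and the paper is careful (in the subsequent remark) to claim only that the blow-down is a compact manifold of class $\mathcal{C}$, so your stronger claim would need a separate argument.
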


\begin{proof} For convenience of the reader, we reproduce the arguments from \cite[Proposition 0.4]{PS98}.
First, note that all non-minimal surfaces $F$ with $-K_F$ nef are isomorphic to the
plane $\P_2$ blown up in at most 9 points in sufficiently general position.

We fix the K\"ahler class $\omega$ on $X$. Consider the fiber $F$ of $\pi$ in the assumption and
take a $(-1)$-curve $E \subset F$ such that $\omega \cdot E$ is minimal under all $(-1)$-curves in $F.$
By assumption,
$$(K_F+E) \cdot \omega <0.$$

The normal bundle of $E$ satisifes
$$0 \to N_{E/F} \to N_{E/X} \to N_{F/X}|_E \simeq \cO_E^{\dim Y} \to 0.$$
Since $H^1(E, \mathrm{Hom}(N_{F/X}|_E,  N_{E/F}))=H^1(\P^1, \cO(-1))=0$, the short exact sequence splits and the normal bundle
is of the form
$$ N_{E \vert X} = \cO^{\dim Y} \oplus \cO(-1).$$
In particular, $E$ corresponds to a regular point of the Douady space. 
Note that the irreducible component of the Douady space containing $E$ is of dimension of $Y$ which is moreover compact.
We will show in the following that it is in fact smooth.

To fix notations,
there exists a compact complex space $Y'$ and an irreducible effective divisor
$M \subset X \times Y'$, flat over $Y'$ such that $M \cap (X \times 0) = E,$ identifying  $X \times 0$ with $X.$
We let
$$ E_t = M \cap (X \times t)$$
and shall identify $X$ with $X \times t.$

{\it Claim 1} Every $E_t$ is supported in some fiber $F'$ of $\pi.$

Otherwise, $\pi(E_t)$ will be a non-trivial effective curve.
This contradicts the fact that the $E_t$ are in the same cohomology class and so are the $\pi(E_t)$.
Note that $[\pi(E)]=0 \in H^{2 \dim (Y)-2}(Y, \Z)$.
In particular, there exists commuting diagram
$$\begin{tikzcd}
M \arrow[r] \arrow[d] & X \arrow[d, "\pi"] \\
Y' \arrow[r, "\varphi"]          & Y.
\end{tikzcd}$$
The holomorphic map $\varphi: Y' \to Y$ is surjective.
More precisely,  if it is not surjective, the image of $Y'$ is a proper analytic subset of $Y$.
In particular, by semicontinuity of the dimension of the fiber, the preimage of $\pi(E)$ has strictly positive dimension.
In other words, $F$ contains a family of rational curves containing $E$ of strictly positive dimension.
This contradicts the fact that $E$ is rigid in $F$.

To show that the morphism $Y' \to Y$ is in fact locally isomorphic near the point coresponding to $E$, we need a local description of the Douady space using Kodaira's theory, cf. \cite{Kod62}.
Locally, one can cover $X$ by coordinate charts $(z_i^\lambda,w_j^\lambda ), i=1,2; 1 \leq j \leq \dim(Y)$ where $\lambda$ is the index of the coordinate chart such that $\pi(z_i^\lambda,w_j^\lambda)=w_j^\lambda$ and $E$ is locally given by $\{z_2^\lambda= w_j^\lambda=0, \forall j \}$.
By Kodaira's theory, the deformation of $E$ is locally given by $(z_1^\lambda, z_2^\lambda=\psi^\lambda(z_1^\lambda, t), w_j^\lambda=\varphi_j^\lambda(z_1^\lambda, t) )$
for some local holomorphic functions $\psi^\lambda(z_1^\lambda, t), \varphi_j^\lambda(z_1^\lambda, t)$ where $t$ is the coordinate of the parameter space.
The differential of the map $t \mapsto (\psi^\lambda(z_1^\lambda, t), \varphi_j^\lambda(z_1^\lambda, t))$ gives an identification of the tangent space of the parameter space at 0 with $H^0(E, N_{E/X})$.
In particular, the differential of the map $t \mapsto ( \varphi_j^\lambda(z_1^\lambda, t))$ is isomorphic.
(And $\psi^\lambda(z_1^\lambda, t)$ is independent of $t$.)
On the other hand, in local coordinates,
$\varphi(t)=(\varphi_j^\lambda(z_1^\lambda, t))$
which implies that $\varphi$ is locally isomorphic.

Note that the above calculation is valid for any smooth $(-1)$-rational curve.

Observe next that
$$ -K_X \cdot E_t = -K_X \cdot E = 1.$$
Consider some $E_{t_0}$.
Write
$$ E_{t_0} = \sum_{i \geq 0} a_i C_i $$
with irreducible curves $C_i$ (as a cycle, neglecting the possible embedded point). Since $-K_{F'}$ is nef, we conclude (after renumbering possibly) that
$$a_0 = 1, -K_{F'}\cdot C_0 = 1$$
and that
$$-K_{F'}\cdot C_i = 0, i \geq 1.$$

Since $E_{t_0}$ is a deformation of $E$ and the intersection number is invariant under deformation,
we have that
$$(K_{F'}+E_{t_0}) \cdot \omega <0.$$

{\it Claim 2} $C_0 $ is a $(-1)$-curve in $F'$ and the $C_i, i \geq 1,$ are $(-2)$-curves.

Define
$$\cI:= \otimes \cO(-C_i)^{\otimes a_i} \subset \cO.$$
Consider the map
$$H^1(F', \cO_{F'}) \to H^1(F', \cO_{F'} / \cI) \to H^2(F', \cI)=H^0(F',K_{F'}+\sum a_i C_i )=0$$
by Serre duality.
The last equality follows from the inequality $(K_{F'}+C) \cdot \omega <0.$
Since $F'$ is rational, $H^1(F', \cO_{F'} / \cI)=H^1(F', \cO_{F'} )=0.$

Since $H^1(E_{t_0}, \cO_{E_{t_0}})=0$, any irreducible component is a smooth rational curve.
Adjunction formula shows that
$C_0 $ is a $(-1)-$curve in $F'$ and the $C_i, i \geq 1,$ are $(-2)-$curves.

{\it Claim 3} $E_{t_0}$ has only one irreducible component.

Now we look at the deformations of $C_0$ and obtain a family
$(C_s)_{s \in A.}$ For a small neighborhood $\Delta' \subset Y'$ of $t_0$ the curve $C_t $ is in $\pi^{-1}(\varphi(t))$
(strictly speaking there is a canonical map $f: A \to Y,$ and $f\vert_\Delta $ is biholomorphic on a small neighborhood $\Delta \subset A$ of $0$, so that
we can identify $t$ and $f(t)$ for small $t).$
Therefore we can consider the (non-effective) family of cycles $(E_t - C_t)_{t \in \Delta'}$ so that
$$ E_{t_0} - C_0 = \sum_{i \geq 1} a_i C_i.$$
By the choice of $E_t,$ $\omega \cdot E_t$ is minimal for general $t,$ therefore $\omega \cdot E_t \leq \omega \cdot C_t$ and
we conclude
$$ \omega \cdot \sum_{i \geq 1} a_i C_i = 0$$
and therefore $a_i = 0$ for $i \geq 1$ so that $E_{t_0}$ is irreducible and reduced.

{\it Claim 4} $Y'$ is smooth at $t_0.$

Note that $E_{t_0}$ has no embedded point.
In fact, an embedded point would increase
the arithmetic genus, which would then contradict the fact that the arithmetic genus
stays 0 in the flat family and that the reduced underlying space is a rational curve.
Note that the reduced underlying space of $E_{t_0}$ is a smooth $(-1)$-curve.
Thus we have
$$ h^1(E_{t_0},N_{E_{t_0}/ X}) = 0$$
and $Y'$ is smooth at $t_0.$

In conclusion, $Y'$ is smooth and $M \to Y'$ is a $\P^1$-bundle.
The
fact that any fiber of $Y'$ represents some $(-1)$-curve in some fiber of $\pi$ implies that $Y' \to Y$ is \'etale.
So set $D = Y'$ and
define $S$ to be the irreducible component of $  \mathrm{pr}_1(M) \times_Y Y'$ mapping onto $\mathrm{pr}_1(M).$
\end{proof}
Note that the assumption is necessary for some well-chosen K\"ahler form on $X$.
The assumption is satisfied if $(K_F)^2 \geq 2$ and $-K_X$ is nef.
In fact, assume on the contrary that for any K\"ahler form $\omega$ and any $(-1)-$curve $C$ in any fiber $F$ we have
$$(K_F+C) \cdot \omega \geq 0.$$
In particular, the intersection number of $(K_F+C) $ with any nef class is non-negative.
Thus
$$(K_F+C) \cdot(-K_X)|_F=(K_F+C) \cdot (-K_F)=-(K_F)^2 +1\geq 0$$
which is impossible.

\begin{rem}
\label{Blow-Down}
 Note that the proof of Proposition \ref{PS04} holds if we replace the K\"ahler form $\omega$ by some big class $\alpha$ such that the restriction of $\alpha$ to any fiber $F$ is big and nef in codimension 1 (cf. e.g. \cite{nef}).
 In particular, by Demailly's theory of Monge-Amp\`ere operators  (see e.g. \cite[Section 4, Chap III]{agbook}), for any effective curve $C$ in $F$,
 the intersection number $C \cdot \alpha|_F$ is strictly positive.

 In particular, if $(K_F)^2 \geq 2$, $-K_X$ is nef and the other assumptions of Proposition \ref{PS04} are satisifed, up to some finite \'etale cover, we can blow down $\pi: X \to X'$ to some compact manifold $X'$ of class $\mathcal{C}$ which admits a smooth Del Pezzo fibration of degree at least 3 onto another compact manifold of class $\mathcal{C}$.
 Note that $\pi_*(-K_X+\epsilon \omega)$ is a big class for any $\epsilon >0$ whose restriction to any Del Pezzo surface fiber is big and nef in codimension 1.
 Thus, if the Del Pezzo surface fiber is not $\P^2$ or a product $\P^1 \times \P^1$, one can apply again Proposition \ref{PS04} to get a further blown-down (after passing to a suitable finite \'etale cover).
\end{rem}
The following proposition is a special case of \cite{CH17}, cf. also \cite[Corollary 3.4]{PS98}.
However, we find that its proof has its own interest using only deformation theory under the additional assumption that the Albanese morphism is smooth and the condition on the intersection number. 
\begin{prop}
\label{PS3.4}
 Let $X$ be a uniruled compact K\"ahler manifold of dimension $n$ such that
$\tilde{q}(X)=q(X)=n-2$ and
$-K_X$ is nef.
Assume that the Albanese morphism $\alpha: X \to A(X)$ is smooth.
Moreover, assume that for any fiber $F$ of $\alpha$,
$$(K_F)^2 \geq 2.$$
Then, up to some finite \'etale cover, the possibilities of $X$ are one of the following:
\begin{enumerate}
 \item $X \simeq \P(E)$ for some numerically flat vector bundle (of rank 3) over $A(X)$;
 \item $X \simeq \P(E_1) \times_{A(X)} \P(E_2)$ for some numerically flat vector bundles $E_1$ and $E_2$ (of rank 2) over $A(X)$;
 \item we have a factorisation
$$X \xrightarrow{\gamma_1} X_1 \cdots \xrightarrow{\gamma_k} X_k \xrightarrow{\rho}  A(X)$$
with $\gamma_j$ blow-ups of  \'etale multi-sections, $X_k \simeq \P(E_1) \times_{A(X)} \P(E_2)$ for some vector bundles $E_1,E_2$ over $A(X)$ or $X_k \simeq \P(E)$ for some vector bundle over $A(X)$.
\end{enumerate}
\end{prop}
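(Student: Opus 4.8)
The plan is to run a fiberwise relative minimal model program over $A(X)$ carried out entirely through the explicit blow-down construction of Proposition~\ref{PS04} — so that only deformation theory is used and the $4$-dimensional K\"ahler MMP of \cite{DHP23} is avoided, as emphasized in Remark~\ref{Blow-Down} — and then to analyze the resulting smooth minimal del Pezzo fibration. By adjunction $-K_F=(-K_X)|_F$ is nef for every fiber $F$ of $\alpha$, and $\tilde q(F)=0$ by \cite[Proposition 3.12]{DPS94} since $\alpha$ is smooth. As $X$ is uniruled while $A(X)$ carries no rational curves, the general fiber is a uniruled surface; a uniruled surface with $q(F)=0$ is rational, so every fiber is a rational surface with $-K_F$ nef and $(K_F)^2\ge 2$, i.e.\ a weak del Pezzo surface of degree $\ge 2$. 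Whenever such an $F$ is non-minimal it contains a $(-1)$-curve $C$, and the computation recorded just after Proposition~\ref{PS04}, namely $(K_F+C)\cdot(-K_X)|_F=-(K_F)^2+1<0$, verifies the hypothesis of Proposition~\ref{PS04} in the big-class formulation of Remark~\ref{Blow-Down}; moreover both the nefness of $-K$ on fibers and the inequality $(K_F)^2\ge 2$ are preserved by contracting a $(-1)$-curve, so this input persists at every stage.

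Next I would apply Proposition~\ref{PS04} and Remark~\ref{Blow-Down} repeatedly. Each application, possibly after a finite \'etale base change $A'\to A(X)$ — which is again an abelian variety and amounts to replacing $X$ by a finite \'etale cover — yields a smooth effective divisor that is a $\P^1$-bundle over an \'etale multi-section of the base and that meets every fiber in a $(-1)$-curve; contracting it is a blow-up $\gamma_j\colon X_{j-1}\to X_j$ of a smooth codimension-two center, with $X_j\to A(X)$ again smooth and $(K_F)^2$ raised by one on every fiber. Since $(K_F)^2\le 9$ the process terminates after finitely many steps and yields a factorization $X=X_0\xrightarrow{\gamma_1}\cdots\xrightarrow{\gamma_k}X_k\xrightarrow{\rho}A(X)$ (up to finite \'etale cover) with $\rho$ smooth and all of its fibers minimal del Pezzo surfaces, hence $\P^2$ or $\P^1\times\P^1$. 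To see that the fiber type is constant one invokes the non-jumping principle discussed around Remark~\ref{remark 3}: the nefness of $-K$ on the total space together with its nefness on every fiber rules out the Hirzebruch-type degeneration, so $\rho$ is a locally trivial fibration with $\P^2$-fibers or with $\P^1\times\P^1$-fibers.

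Then I would identify $X_k$. As $\rho$ is a smooth proper map all of whose fibers are isomorphic to a fixed $\P^2$, resp.\ $\P^1\times\P^1$, the Fischer-Grauert theorem \cite{FG65} shows it is a locally trivial fiber bundle, and the obstruction to writing it as $\P(E)$ for a rank-$3$ bundle $E$, resp.\ as a fiber product $\P(E_1)\times_{A(X)}\P(E_2)$ of two $\P^1$-bundles, lies in a torsion Brauer group (as in Lemma~\ref{Lem4.1}, now over the whole torus, which is easier), hence vanishes after one more finite \'etale cover. If $k\ge 1$ this is conclusion~(3). If $k=0$ then $X=X_k$ and $-K_X$ is $\rho$-ample, restricting on each fiber to the anticanonical class of the del Pezzo surface; one then upgrades $E$, resp.\ $E_1$ and $E_2$, to numerically flat bundles by an argument parallel to the proof of Theorem~\ref{Proj}: the relevant direct image is weakly positively curved with vanishing first Chern class by \cite[Proposition 2.6]{MW23}, and the equality case of the Bogomolov-Gieseker inequality together with the Harder-Narasimhan analysis (via \cite{BS94} and \cite{numer_flat}) forces numerical flatness, giving conclusions~(1) and~(2). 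Finally the Albanese morphism is locally constant in the sense of \cite{MW}: in cases~(1) and~(2) the projectivization of a numerically flat bundle is flat by \cite{DPS94} and \cite{Sim92}, hence induced by a representation of $\pi_1(A(X))$ on the fiber, and in case~(3) $X$ is built from such a locally constant fibration by blowing up \'etale multi-sections, an operation compatible with local constancy.

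The step I expect to be the main obstacle is the iterated blow-down: correctly propagating the \'etale base changes and the \'etale multi-sections through the successive contractions, and above all establishing the non-jumping of the fiber type — that a smooth family of surfaces whose fibers and whose total space all carry a nef anticanonical class cannot exhibit the Hirzebruch degeneration of Remark~\ref{remark 3}. Once that is in place, the identification of $X_k$ and the numerical-flatness upgrade are either formal or a direct transcription of the proof of Theorem~\ref{Proj}.
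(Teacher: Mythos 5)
Your proposal follows essentially the same route as the paper's own proof: reduce to minimal fibers by iterating the blow-down of Proposition \ref{PS04} in the big-class form of Remark \ref{Blow-Down} (terminating since $(K_F)^2\le 9$), and then identify the resulting $\P^2$- or $\P^1\times\P^1$-fibration over $A(X)$, after a finite \'etale cover, as $\P(E)$ or $\P(E_1)\times_{A(X)}\P(E_2)$ with numerically flat bundles — the paper does this by citing \cite[Lemmas 7.4 and 9.4]{CP91} and \cite{El82}, whose content is precisely the Brauer-group/Fischer--Grauert and direct-image-positivity arguments you spell out (your Bogomolov--Gieseker detour is heavier than needed, since $E$ nef with $c_1(E)=0$ is already numerically flat). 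The step you single out as the main obstacle — constancy of the minimal fiber type, i.e.\ excluding a jump of a $\P^1\times\P^1$-fiber to $\mathbb{F}_2$ — is handled in the paper at the same level of detail (it asserts that the minimal rational fibers with nef anticanonical bundle are $\P^2$ or $\P^1\times\P^1$ and appeals to the discussion before Section 5 and to \cite[Lemma 9.4]{CP91}, adding only the remark that the intermediate blow-downs, a priori of class $\mathcal{C}$, are in fact K\"ahler via Barlet spaces), so your treatment matches the published argument there as well.
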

\begin{proof}
First note that all non-minimal surfaces $F$ with $-K_F$ nef are isomorphic to the
plane $\P^2$ blown up in at most 9 points in sufficiently general position (by the Kodaira-Enriques classification).
All minimal rational ones are either $\P^2$ or $\P^1 \times \P^1$.
The case of $\P^2$ is treated as follows:
By \cite[Lemma 7.4]{CP91}, up to some finite \'etale cover, we have $X \simeq \P(E) $ for some vector bundles $E$ over $A(X)$ with trivial determinant.
Since $-K_X$ is nef, $E$ is nef, hence numerically flat. 

The case of $\P^1 \times \P^1$ can be treated similarly:
Notice that the proof of \cite[Lemma 9.4]{CP91} works also in the compact K\"ahler setting.
Up to some finite \'etale cover, we have $X \simeq \P(E_1) \times_{A(X)} \P(E_2)$ for some vector bundles $E_1,E_2$ over $A(X)$ with trivial determinant.
By positivity of direct images applied to the submersions $X \to \P(E_i)$ $(i=1,2)$ with the condition that $-K_X$ is nef,
$\P(E_i)$ $(i=1,2)$ has nef anticanonical line bundle which implies that $E_1$ and $E_2$ are numerically flat.

The remaining case can be treated as follows:
By the preceding remark \ref{Blow-Down}, the factorisation as stated in case (3) exists.
Thus, $X_k$ is a compact manifold of class $\mathcal{C}$
whose fibers of the Albanese morphism are either $\P^2$ or $\P^1 \times \P^1$.
The conclusion follows from \cite[Lemma 9.4]{CP91} and \cite{El82}.
Note that proof of \cite[Lemma 9.4]{CP91} also works for compact manifolds of class $\mathcal{C}$ by replacing the Chow variety by the corresponding Barlet space whose irreducible components are known to be compact.
In particular, all $X_i$'s are in fact K\"ahler.
Note that the Albanese morphism is always projective in this case.
(In fact in this case, the Albanese morphism can be chosen to be a MRC quotient map.)

We claim that the Albanese morphism is locally constant (cf. \cite[Definition 2.3]{MW}) by applying \cite[Proposition 2.5]{MW}.
Since all spaces are smooth, it is enough to show the existence of a relatively ample line bundle whose direct image is numerically flat.
By \cite[Theorem 2.6]{MW23}, it is enough to show the existence of a relatively ample and pseudoeffective line bundle.
Denote by $\pi: X \to X_k$ the composition with exceptional divisor $E$.
Since $-K_{X_k}$ is relatively ample with respect to the natural projection $X_k \to A(X)$, for $m \gg 0$, $m \pi^* (-K_{X_k})-E$ is $\alpha-$ample.
On the other hand, $-K_X=\pi^*(-K_{X_k})-E$ which implies that $m \pi^* (-K_{X_k})-E=-mK_X+(m-1)E$ is pseudoeffective.
\end{proof}

\begin{rem}
We find that there is a gap in the proof of \cite[Proposition 0.4]{PS98}.
The following arguments are given in the proof:
Since $F'$ is realised as blow-up of $\P^2$ in 9 points, we may take a general line $l$ in $F'.$
This general line $l$ can be deformed in $X$ to a general line in a neighbouring $F_s$.
For general s we have
$$ (K_{F_s} + E_s) \cdot l_s < 0,$$
where $E_s$ is one of the $(-1)-$curves in our family sitting inside $F_s.$
The following example shows that the intersection number of $(K_{F_s} + E_s)$ with $l_s$ can be 0 for some $(-1)-$curve.
This comes from the fact that $F'$ may have different contractions to $\P^2$.
Note that the result itself holds as a consequence of structure theorem of projective manifolds with nef anticanonical line bundle as shown in \cite{CH19}.
\end{rem}

\begin{ex}
Let $S_8$ be a Del Pezzo surface of degree 1.
Let $\pi: S_8 \to \P^2$ be the blow down with exceptional curves $E_i(1 \leq i \leq 8)$.
By \cite[Proposition 8.2.19]{Dol09}
there exist $(-1)-$curves $C_i(1 \leq i \leq 8)$ on $S_8$ such that
$C_i+E_i=-2K_{S_8}$.
Then
$$\pi_* C_i=-2K_{\P^2}=\cO(6)$$
With the choice $F_s=S_8$, $E_s=C_i$, $l_s$ the strict transforms of $\cO(1)$,
$$ (K_{F_s} + E_s) \cdot l_s \geq 0.$$
(In fact it is enough to choose a Del Pezzo surface of degree 2.)

\end{ex}

\begin{rem}
The above proof of Proposition \ref{PS3.4} holds for a smooth fibration $\pi:X \to Y$ from a compact K\"ahler fourfold $X$ to a K3 surface $Y$ under the assumption on the intersection number.
In this case, there are three possiblities:
\begin{enumerate}
\item $Y \times \P^2$;
\item $Y \times \P^1 \times \P^1$;
\item we have a factorisation
$$X \xrightarrow{\gamma_1} X_1 \cdots \xrightarrow{\gamma_k} X_k \xrightarrow{\rho}  Y$$
with $\gamma_j$ blow-ups of  \'etale multi-sections, $X_k \simeq \P(E_1) \times_{Y} \P(E_2)$ for some vector bundles $E_1,E_2$ over $Y$ or $X_k \simeq \P(E)$ for some vector bundle over $Y$.
\end{enumerate}
More precisely, for the first  and second case, the anticanonical line bundle of the fiber $-K_F$ is very ample.
By the Kodaira vanishing theorem,
$H^1(F, -mK_F)=0$, so $-K_X$ is $\pi-$ample by \cite[Theorem 4.2]{BS76}. 
Then we can argue as in \cite{CH19} (cf. e.g. \cite[Theorem 2.6]{MW23}), to see that $\pi_*(-K_X)$ is numerically flat.
By \cite[Proposition 2.5]{MW}, the conclusion follows.
(Note that the Brauer group theory implies that there exists a $\P^r-$bundle over a K3 surface which cannot be written as the projectivization of some vector bundle.)
The remaining case follows similarly as the end of Proposition \ref{PS3.4}.

Conjecturally, all compact K\"ahler non-projective fourfolds $X$ with nef anticanonical line bundle are, up to a finite \'etale cover, one of the following types:
\begin{enumerate}
\item $c_1(X)=0$;
\item there is a locally trivial fibration $\pi:X \to Y$ from a compact K\"ahler fourfold $X$ to a K3 surface or a two dimensional torus $Y$;
\item $X=\P(E)$ for some rank 2 numerically flat vector bundle over a torus of dimension three;
\item the product of a
K3 surface and a projectivization of a rank two numerically flat vector bundle over an
elliptic curve.
\end{enumerate}
The present unknown cases are the simply connected ones with trivial canonical line bundle and case (2) without assuming that the fibration is smooth and having the assumption on the intersection number.
\end{rem}

\section{Algebraic approximation}

The following lemma is well-known to experts. 
For convenience of the reader, we give a proof here which was communicated to us by Campana.
\begin{lem}\label{Lem8}
Let $X$ be a compact K\"ahler manifold.
Assume that the rational quotient $R(X)$ of $X$ is projective.
Then $X$ is projective.
\end{lem}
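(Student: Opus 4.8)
The plan is to exploit the MRC (rational quotient) fibration $r: X \dashrightarrow R(X)$ together with the hypothesis that $R(X)$ is projective, and to show that $X$ carries a K\"ahler class that is rational, i.e. lies in $H^2(X,\mathbb{Q})$, which by Kodaira's embedding criterion forces projectivity. First I would replace $X$ by a smooth model so that the rational quotient becomes an honest morphism $r: X' \to R(X)$ with rationally connected fibers; since projectivity and K\"ahlerness are bimeromorphic invariants in the relevant directions (a K\"ahler manifold bimeromorphic to a projective one need not be projective, but here the argument runs the other way), I need to be slightly careful and work with the original $X$ after resolving indeterminacy, then descend at the end. The key structural input is that the fibers $F$ of $r$ are rationally connected, hence $H^{2,0}(F)=H^{1,0}(F)=0$ and in particular $H^0(F,\Omega_F^1)=H^0(F,\Omega_F^2)=0$.

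The heart of the argument is a Hodge-theoretic one. Let $\omega$ be a K\"ahler class on (the resolved) $X$. Pushing forward, $r_*(\omega^{\dim F + 1})$ or an appropriate primitive part gives a class on $R(X)$; but more directly, I would analyze the Leray filtration on $H^2(X,\mathbb{C})$ for the fibration $r$. Because the fibers are rationally connected, the relevant Leray graded pieces $H^0(R(X), R^2 r_* \mathbb{C})$ vanish (the fibers have $H^2$ purely algebraic, spanned by $(1,1)$-classes coming from divisors) and $H^1(R(X), R^1 r_* \mathbb{C}) = 0$ since $R^1 r_*\mathbb{C}=0$ (rationally connected fibers are simply connected, or at least have $H^1=0$). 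Consequently the edge map $r^*: H^2(R(X),\mathbb{C}) \to H^2(X,\mathbb{C})$, combined with the span of classes of $r$-vertical divisors, surjects onto $H^2(X,\mathbb{C})$ and respects the Hodge structures. Therefore $H^{1,1}(X)$ is spanned by $r^* H^{1,1}(R(X))$ together with classes of divisors on $X$, all of which are defined over $\mathbb{Q}$ because $R(X)$ is projective and divisor classes are integral. Hence $H^{1,1}(X) = H^{1,1}(X) \cap H^2(X,\mathbb{Q}) \otimes \mathbb{R}$, so the K\"ahler cone, being open in $H^{1,1}(X,\mathbb{R})$, contains a rational class.

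I expect the main obstacle to be making the statement ``$H^2(X,\mathbb{C})$ is generated by pullbacks from the base and vertical divisor classes'' fully rigorous for an arbitrary (not necessarily flat, not necessarily equidimensional) resolved MRC morphism $r: X' \to R(X)$, where singular fibers and the exceptional locus of the resolution must be accounted for. The clean way around this is: (i) take a resolution $\mu: X' \to X$ and a holomorphic model $r': X' \to R(X)$ with $R(X)$ projective and smooth; (ii) over a Zariski-open $U \subseteq R(X)$ the map is smooth with rationally connected fibers, and one invokes the fact (as in Graber--Harris--Starr type arguments, or directly from $R^q r'_* \mathbb{C}$ having no sections for $q=1,2$ off a small locus) that $H^2(r'^{-1}(U),\mathbb{C})$ is generated by $r'^* H^2(U,\mathbb{C})$; (iii) extend across the complement, which has real codimension $\geq 2$ issues controlled by adding classes of the finitely many extra divisors, exactly in the spirit of Lemma \ref{Cohomology} and Lemma \ref{Lem4.2} already in this paper. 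Once a rational K\"ahler class is produced on $X'$, one checks it descends: $\mu$ contracts only $\mu$-exceptional divisors, and $X$ being already K\"ahler means a small perturbation of $\mu_*(\text{rational K\"ahler class on } X')$ inside the K\"ahler cone of $X$ is again rational, so $X$ is projective by Kodaira. The delicate point throughout is bookkeeping of the finitely many ``bad'' divisors and verifying their classes are rational, which they are since they are integral Hodge classes.
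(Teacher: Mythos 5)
Your proposal has a genuine gap at its central step. You claim that $H^{1,1}(X)$ is spanned by $r^*H^{1,1}(R(X))$ together with divisor classes, ``all of which are defined over $\mathbb{Q}$ because $R(X)$ is projective,'' and conclude that $H^{1,1}(X,\mathbb{R})$ is defined over $\mathbb{Q}$, so the open K\"ahler cone contains a rational class. But projectivity of $R(X)$ only provides \emph{one} rational K\"ahler class; it does not make $H^{1,1}(R(X))$ defined over $\mathbb{Q}$ (a projective K3 of Picard rank one already has a $20$-dimensional $H^{1,1}$ with only a line of rational classes), so $r^*H^{1,1}(R(X))$ contains transcendental classes and the density argument collapses. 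The Leray bookkeeping is also off: $H^0(R(X),R^2r_*\mathbb{C})$ does \emph{not} vanish (the fibers have $H^2\neq 0$), and $H^2$ of the preimage of a Zariski-open set is not generated by pullbacks alone. What your strategy actually requires is a \emph{rational} class on $X$ that restricts to a K\"ahler class on the fibers (a relative polarization in $H^2(X,\mathbb{Q})$); with such a class $h$ one could take $h+Cr^*A$ for $A$ ample on $R(X)$ and $C\gg 0$. Producing $h$ is exactly the delicate point (the edge map $H^2(X,\mathbb{Q})\to H^0(R(X),R^2r_*\mathbb{Q})$, monodromy, and the singular fibers all intervene), and your proposal never constructs it; so the argument as written does not prove the lemma.

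For comparison, the paper sidesteps any global construction of a rational K\"ahler class. It invokes Campana's criterion that a compact K\"ahler manifold in which two general points can be joined by a chain of compact curves is projective, chooses (after making the rational quotient holomorphic) a complete-intersection curve $C$ through two general points of the projective base, and shows the smooth base change $X_C$ satisfies $H^0(X_C,\Omega^2_{X_C})=0$ by filtering $\Omega^2_{X_{C_0}}$ along $f\colon X_{C_0}\to C_0$ and using that rationally connected fibers carry no holomorphic forms (via Grauert's theorem); Kodaira's criterion then makes $X_C$ projective, which yields the needed chains of curves in $X$. If you want to salvage a Hodge-theoretic route, you would have to prove the existence of the rational relatively K\"ahler class mentioned above, which is substantially harder than the reduction-to-a-curve argument.
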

\begin{proof}
By \cite{Cam81}, to show that $X$ is projective, it is enough to show that any two general points of $X$ can be connected by a chain of (compact) curves.
Without loss of generality, we can assume that the rational quotient is a holomorphic map since as a compact K\"ahler manifold, the projectivity of any modification of $X$ would imply that $X$ is Moishezon (and hence projective).

Take a very ample line bundle $A$ over $R(X)$ which exists by the projectivity assumption.
By Bertini's theorem, any two general points of $R(X)$ can be connected by a smooth curve $C$ defined as a complete intersection of sections in the linear system of $A$ such that the general fiber over $C$ is smooth and rationally connected.
By Sard's theorem, we can furthermore assume that the base change $X_C$ is smooth.
It is enough to show that $X_C$ is projective which implies that any two points of $X_C$ can be connected by a chain of (compact) curves.

By Kodaira's criterion, as $X_C$ is a compact K\"ahler manifold, it is enough to show that $H^0(X_C, \Omega_{X_C}^2)=0$.
Let $C_0 \subset C$ be the smooth locus such that each fiber is a rationally connected manifold.
Any section vanishing over a non-empty Zariski open set of $X_C$ has to vanish identically.
Therefore, it is enough to show that $H^0(X_{C_0}, \Omega_{X_{C_0}}^2)=0$.
We denote by $f: X_{C_0} \to C_0$ the restricted map which gives a short exact sequence of vector bundles
$$0 \to f^* \Omega^1_{C_0} \to \Omega^1_{X_{C_0}} \to \Omega^1_{X_{C_0}/C_0} \to 0$$
that induces a filtration on $\Omega^2_{X_{C_0}}:$
$$0 \subset f^*\Omega^2_{{C_0}} \subset f^*\Omega^1_{{C_0}} \otimes \Omega^1_{X_{C_0}} \subset \Omega^2_{X_{C_0}}.$$
The graded pieces are
$$R_0:=f^*\Omega^2_{{C_0}},R_1:= \Omega^1_{X_{C_0}/C_0} \otimes f^* \Omega^1_{{C_0}}, R_2:=\Omega^2_{X_{C_0}/C_0}.$$
It is enough to show that the graded pieces have no global sections.
$R_0=0$ since $C$ is of dimension 1.
$H^0(X_{C_0}, \Omega^2_{X_{C_0}/C_0})=H^0(C_0, f_* \Omega^2_{X_{C_0}/C_0})=0$ since by Grauert's theorem, for any $z \in C_0$,
$H^0(X_z, \Omega^2_{X_{C_0}/C_0}|_{X_z})=0$.
Note that a rationally connected manifold has no non-trivial holomorphic forms.
$H^0(X_{C_0}, \Omega^1_{X_{C_0}/C_0} \otimes f^* \Omega^1_{{C_0}})=H^0(C_0, f_*(\Omega^1_{X_{C_0}/C_0} \otimes f^* \Omega^1_{{C_0}}))=H^0(C_0, f_*(\Omega^1_{X_{C_0}/C_0}) \otimes \Omega^1_{{C_0}}))=0$
 where the last equality follows from Grauert's theorem.
This finishes the proof of the lemma.
\end{proof}
\begin{rem}
Similar arguments in fact show the following statement:
Let $f:X\to B$ be a proper holomorphic submersion with connected fibers between two  connected complex manifolds. 
If the fibers are rationally connected we have
$$H^0(X,\Omega^p_X)=f^*(H^0(B,\Omega^p_B))$$ for any $p\geq 0$.
\end{rem}

\begin{prop}
\label{AlgApprox}
Assume that Conjecture 2 holds. Then any compact K\"ahler manifold $X$ with nef anticanonical divisor can be algebraically approximated.
\end{prop}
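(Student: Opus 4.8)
The plan is to use the locally constant fibration supplied by Conjecture~\ref{conj-main} to reduce the algebraic approximation of $X$ to that of its base, which has numerically trivial canonical class, exactly as in the three-dimensional case treated in \cite{MW23}. Applying Conjecture~\ref{conj-main}, fix a locally constant fibration
$$\varphi\colon X\simeq(\tilde Y\times F)\big/\pi_1(Y)\longrightarrow Y=\tilde Y/\pi_1(Y),$$
with $Y$ compact K\"ahler, $c_1(Y)=0$, $F$ rationally connected, and structure representation $\rho\colon\pi_1(X)\to\operatorname{Aut}(F)$. Since a rationally connected compact K\"ahler manifold is simply connected and projective, the homotopy sequence of $\varphi$ gives $\pi_1(X)\cong\pi_1(Y)$. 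Because $c_1(Y)=0$, the canonical bundle $K_Y$ is nef, hence pseudoeffective, so $Y$ is not uniruled; as $\varphi$ has rationally connected fibers this forces $\varphi$ to be bimeromorphic to the MRC fibration of $X$, i.e.\ $R(X)=Y$.

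The first step is to algebraically approximate $Y$. Pass to a finite \'etale Galois cover $Y'\to Y$, with group $G$, for which the Beauville--Bogomolov decomposition reads $Y'\simeq T\times\prod_jV_j\times\prod_kW_k$, with $T$ a complex torus, the $V_j$ Calabi--Yau manifolds (in particular $H^2(V_j,\mathcal O_{V_j})=0$) and the $W_k$ irreducible hyperk\"ahler manifolds. Each $V_j$ is automatically projective, since $H^2(V_j,\mathcal O_{V_j})=0$ forces the existence of an integral K\"ahler class; a complex torus is algebraically approximated by abelian varieties, and an irreducible hyperk\"ahler manifold by projective ones (density of projective period points). Performing these deformations inside the locus of $G$-invariant complex structures and passing to the $G$-quotient yields a smooth proper family $\mathcal Y\to\Delta$ with $\mathcal Y_0=Y$, with $c_1(Y_t)=0$ for every $t$, and with a sequence $t_i\to0$ for which $Y_{t_i}$ is projective. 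After shrinking $\Delta$, Ehresmann's theorem identifies $\pi_1(Y_t)$ with $\pi_1(Y)\cong\pi_1(X)$ canonically along the family.

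Next I would spread out the locally constant structure. Let $\widetilde{\mathcal Y}\to\mathcal Y$ be the universal cover of the total space, so that $\widetilde{\mathcal Y}\to\Delta$ has fiber $\widetilde{Y_t}$, and set
$$\mathcal X:=(\widetilde{\mathcal Y}\times F)\big/\pi_1(Y),$$
where $\pi_1(Y)$ acts on $\widetilde{\mathcal Y}$ over $\Delta$ by deck transformations and on $F$ through $\rho$. This action is free, so $\mathcal X\to\Delta$ is a smooth proper deformation with $\mathcal X_0=X$, and each $X_t$ is a locally constant $F$-fibration over $Y_t$. Each $X_t$ is K\"ahler: restricting a K\"ahler class of $X$ to a fiber and transporting it gives a $\rho(\pi_1(Y))$-invariant K\"ahler class on $F$, so the sum of the pulled-back K\"ahler classes on $\widetilde{Y_t}\times F$ is $\pi_1(Y)$-invariant and positive and descends to $X_t$. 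Finally, for $t=t_i$ the base $Y_{t_i}$ is projective with $c_1=0$, hence not uniruled, so $\varphi_{t_i}\colon X_{t_i}\to Y_{t_i}$ is again the MRC fibration and $R(X_{t_i})$, being bimeromorphic to the projective manifold $Y_{t_i}$, is projective; Lemma~\ref{Lem8} then shows that $X_{t_i}$ is projective. Thus $\mathcal X\to\Delta$ is an algebraic approximation of $X$.

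I expect the main obstacle to be the $G$-equivariant algebraic approximation of the Beauville--Bogomolov factors: one must deform the torus and the hyperk\"ahler parts within the locus of complex structures commuting with the $G$-action while reaching projective ones, i.e.\ establish density of the projective period points inside the $G$-invariant part of the relevant period domains. A secondary technical point is to verify that the transported data genuinely defines a holomorphic family $\mathcal X\to\Delta$ over the disk. Once these are settled, the non-uniruledness of the base together with Lemma~\ref{Lem8} finishes the argument.
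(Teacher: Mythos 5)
Your construction is essentially the paper's own proof: approximate the base $Y$ with $c_1(Y)=0$ by a family $\mathcal Y\to\Delta$ with densely many projective fibers, lift to the universal cover of the total space, form $\mathcal X=(\widetilde{\mathcal Y}\times F)/\pi_1(Y)$ using the representation $\rho$, identify $Y_{t}$ with the MRC quotient of $X_{t}$ (the fibers are rationally connected, the base is not uniruled), and conclude projectivity of $X_{t_i}$ from Lemma~\ref{Lem8}. The one place you diverge is the approximation of $Y$: you route it through a finite \'etale Galois cover and a $G$-equivariant Beauville--Bogomolov decomposition, and you flag the $G$-equivariant density of projective period points as the main unresolved obstacle. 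The paper avoids this entirely by invoking the known algebraic approximation of compact K\"ahler manifolds with $c_1=0$; this can be obtained directly on $Y$ itself (for torsion $K_Y$ the deformations are unobstructed, and contraction with a suitable volume form shows that rational $(1,1)$-classes near a K\"ahler class stay of type $(1,1)$ on a dense set of fibers of the Kuranishi family), so no cover, no equivariance, and no quotient of the family are needed --- your flagged obstacle dissolves once you approximate $Y$ rather than its cover. One further caution: when you produce a K\"ahler class on $X_t$ you pass from a monodromy-invariant K\"ahler \emph{class} on $F$ to an invariant K\"ahler \emph{form}; since the image of $\rho$ need not be finite this requires an argument (e.g.\ using that $F$ is projective and rationally connected, so the subgroup of $\operatorname{Aut}(F)$ fixing an ample class acts through a group for which an invariant form in the class can be found), or alternatively the explicit product-metric descent that the paper uses to check nefness of $-K_{X_t}$ serves the same purpose. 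With these adjustments your argument matches the paper's.
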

\begin{proof}
Consider the fibration $\varphi: X \to Y$ predicted in Conjecture 1.
By the well-known fact that a Calabi-Yau manifold can be algebraically approximated, there exists a deformation
$\pi: \mathfrak{Y} \to \Delta$ of $Y$ over a small disk $\Delta$ such that for Euclidean dense $t \in \Delta$, $Y_t := \pi^{-1}(t)$ is a projective Calabi-Yau manifold.
Let $p: \tilde{\mathfrak{Y}} \to \mathfrak{Y}$ be the (holomorphic) universal cover of $\mathfrak{Y}$.

We claim that $\tilde{Y}_t := (p \circ \pi)^{-1}(t)$ is the universal cover of $Y_t$ for any $t$.
Since a base change of \'etale morphism is still \'etale, the projection $\tilde{Y}_t \to Y_t$ is \'etale (i.e. a unbranched covering map).
Since $\Delta$ is contractible, by Ehrensmann's theorem,
there exist $2 $ pointwise linear independent smooth vector fields over $\mathfrak{Y}$.
These vector fields define a diffeomorphism between $\mathfrak{Y}$ and $Y_0 \times \Delta$.
Since $p$ is \'etale, the pullback of these vector fields are well defined on $\tilde{\mathfrak{Y}}$.
Up to a possible shrinking of $\Delta$, the pullback fields define a diffeomorphism between $\tilde{\mathfrak{Y}}$ and $\tilde{Y}_0 \times \Delta$.
In particular, for any $t$, $\pi_1(\tilde{Y}_t)=\pi_1(\tilde{\mathfrak{Y}})=0$ since $\Delta$ is contractible.
This finishes the proof of the claim.

Consider $ \mathfrak{X} := (\tilde{\mathfrak{Y}} \times F)/\pi_1(Y) \xrightarrow{\tilde{\varphi}} \mathfrak{Y} \to \Delta$ which defines a deformation of $X$.
Since the group action is free, $\mathfrak{X}$ is smooth.
For any $t \in \Delta$, the morphism $X_t:= (\pi \circ \tilde{\varphi})^{-1}(t) \to Y_t$ has rational connected fiber whose base is non-uniruled.
Thus $Y_t$ is a MRC quotient of $X_t$.
For dense $t$, Lemma \ref{Lem8} implies that $X_t$ is projective which finishes the proof.
Note that for such dense $t$, $-K_{X_t}$ is nef.
Indeed, since $-K_X$ is nef, by definition, for any $\epsilon >0$, there exists a smooth metric $h_\epsilon$ such that its Chern curvature satisfies $c_1(-K_X, h_\epsilon) \geq -\epsilon \omega$.
We pull back $h_\epsilon$ to the universal cover $q: \tilde{Y} \times F \to X$ of $X$ whose restriction to $\{ \tilde{y} \} \times F$ for any $\tilde{y} \in \tilde{Y}$ defines a $\pi_1(Y)-$invariant metric on $-K_F$.
The product metric defined by the restricted $h_\epsilon$ and the Ricci flat metric on $\tilde{Y}_t$ is $\pi_1(Y)-$invariant and thus descends to $X_t$ which implies that $-K_{X_t}$ is nef.
\end{proof}
\begin{rem}
The above Proposition shows that the nefness of the anticanonical line bundle is a deformation invariant property.
However, the strong pseudoeffectivity of the tangent bundle is not a deformation invariant property in general.
For example, the blow-up of $\P^2$ in at most 4 points (in general position) has a strongly pseudoeffective tangent bundle as shown in \cite{HIM}.
However, any toric variety has strongly pseudoeffective tangent bundle which can be taken as blow-up of $\P^2$ at arbitrary many points (in special position).
\end{rem}

\end{document}